\newtheorem{example}{Example}
\newtheorem{theor}{Theorem}
\newtheorem{thm}{Theorem}[section]
\newtheorem{lem}[thm]{Lemma}
\newtheorem{prop}[thm]{Proposition}
\theoremstyle{definition}
\newtheorem{defn}[thm]{Definition}
\newtheorem{notation}[thm]{Notation}
\newtheorem{conv}[thm]{Convention}
\newtheorem{rem}[thm]{Remark}
\newtheorem{prop-defn}[thm]{Proposition-Definition}
\newtheorem{case}{Case}
\newtheorem*{claim*}{Claim}
\newtheorem*{ack*}{Acknowledgements}
\newtheorem*{ex*}{Example}
\title{Dynamics of hyperbolic iwips}
\author{Caglar Uyanik}
\address{\tt Department of Mathematics, University of Illinois at
 Urbana-Champaign, 1409 West Green Street, Urbana, IL 61801, USA
\newline http://www.math.uiuc.edu/\~{}cuyanik2/} \email{\tt cuyanik2@illinois.edu}
\thanks{\today}
\begin{document}
	
\begin{abstract} We present two proofs of the fact, originally due to Reiner Martin \cite{Martin}, that any fully irreducible hyperbolic element of $Out(F_N)$ acts on the projectivized space of geodesic currents $\mathbb{P}Curr(F_N)$ with uniform north-south dynamics.
The first proof, using purely train-track methods, provides an elaborated and corrected version of Reiner Martin's original approach. The second proof uses the geometric intersection form of Kapovich and Lustig and relies on unique ergodicity results from symbolic dynamics.
\end{abstract}

\thanks{ The author is partially supported by the NSF grants of Ilya Kapovich (DMS-0904200) and Christopher J. Leininger (DMS-1207183) and also acknowledges support from U.S. National Science Foundation grants DMS 1107452, 1107263, 1107367 ``RNMS: GEometric structures And Representation varieties" (the GEAR Network).
}

\subjclass[2010]{Primary 20F65}

\maketitle

	
\section{Introduction}

Thurston proved that a pseudo-Anosov homeomorphism of a closed surface acts with north-south dynamics on Thurston's space of projective measured laminations \cite{Th}; see also \cite{Iva}.  In fact, the arguments in \cite{Iva} can also be used to prove that even on Bonahon's larger space of {\em geodesic currents}, a pseudo-Anosov homeomorphism of a closed surface acts with north-south dynamics.  Something similar also holds for pseudo-Anosov homeomorphisms of surfaces with boundary, but the statement there is slightly more complicated; see \cite{U}.

There is an important analogy between homeomorphisms of surfaces, or more precisely, the mapping class group of a surface, and $Out(F_N)$, the outer automorphism group of a free group $F_N$.  The group $Out(F_N)$ acts on the closure of the {\em projectivized outer space}, which plays the role of  the space of projective measured laminations, as well as the space of geodesic currents on $F_N$.  The dynamical analogue of a pseudo-Anosov homeomorphism in $Out(F_N)$ is a {\em fully irreducible automorphism}, or an {\em iwip} (irreducible with irreducible powers); see Section \ref{iwipdefn} for details.

Levitt and Lustig \cite{LL} proved that iwips act on the closure of the projectivized outer space with north-south dynamics.  On the other hand, on the space of geodesic currents, one must consider a refined classification of automorphisms.  Specifically, fully irreducible automorphisms are divided into two types: atoroidal (or hyperbolic) and non-atoroidal (or geometric). The action of a non-atoroidal iwip on the space of geodesic currents reduces to the case of pseudo-Anosov homeomorphisms on surfaces, and is described in \cite{U}, for a precise statement see Theorem \ref{geometriciwips}. In this paper we provide two proofs that hyperbolic iwips act with uniform north-south dynamics, which is originally due to Martin \cite{Martin}.

\begin{theor} \label{mainthm} Let $\varphi\in Out(F_N)$ be a hyperbolic iwip and $K_{0}$ be a compact subset of $\mathbb{P}Curr(F_{N})$ not containing $[\mu_{-}]$. Then, given an open neighborhood $U$ of $[\mu_{+}]$ there exists $M_{0}\ge0$ such that $\varphi^{m}(K_{0})\subset U$ for all $m\ge M_{0}$. Similarly, for a compact subset $K_{1}$ not containing $[\mu_{+}]$ and a neighborhood $V$ of $[\mu_{-}]$, there exists an integer $M_{1}\ge0$ such that $\varphi^{-m}(K_1)\subset V$ for all $m\ge M_{1}$. 
\end{theor}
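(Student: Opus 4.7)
The plan is to use a train track representative of $\varphi$ to encode currents by their values on cylinder sets, exploit the Perron--Frobenius structure of the transition matrix to establish pointwise convergence to $[\mu_+]$, and upgrade this to uniform convergence on compact sets via a continuity/compactness argument. The hyperbolic (atoroidal) hypothesis will be essential in pinning down the basin of repulsion as exactly $\{[\mu_-]\}$.

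First I would fix a train track representative $f\colon \Gamma \to \Gamma$ of a sufficiently high power of $\varphi$. Full irreducibility implies that the transition matrix $M$ is primitive with Perron--Frobenius eigenvalue $\lambda > 1$, and $[\mu_+]$ is realized as the limit of normalized counting currents associated to $f^n(e)$ for any edge $e$ of $\Gamma$. Each $\eta \in Curr(F_N)$ corresponds to a shift-invariant Borel measure on the space of biinfinite reduced edge-paths in $\Gamma$, hence is determined by its values on cylinder sets $Cyl(v)$ indexed by finite reduced edge-paths $v$.

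Next I would analyze how cylinder values transform under iteration. For a legal path $v$ and large $n$, the value $\langle \varphi^n\eta,\, Cyl(v)\rangle$ is a weighted sum of $\langle \eta,\, Cyl(u)\rangle$ over finite paths $u$ for which $f^n(u)$ contains $v$ as a subword, with multiplicities governed by $M^n$. Perron--Frobenius then forces $\lambda^{-n}\langle \varphi^n\eta,\, Cyl(v)\rangle$ to converge to a positive multiple of the $\mu_+$-value of $Cyl(v)$, the multiplicative constant being the total mass $\eta$ assigns to biinfinite legal paths (the \emph{legal weight} of $\eta$). Provided this legal weight is positive, normalizing and projectivizing gives $\varphi^n[\eta]\to [\mu_+]$.

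The crux, and main obstacle, is to show that the legal weight of $\eta$ vanishes precisely when $[\eta] = [\mu_-]$. Here the atoroidal assumption is indispensable: the absence of periodic Nielsen paths rules out geometric contributions and forces the support of any legal-weight-zero current to lie in the attracting lamination of $\varphi^{-1}$, which is uniquely ergodic and thus carries the single projective current $[\mu_-]$. Uniformity on the compact set $K_0$ then follows from continuity of the legal-weight functional and the hypothesis $[\mu_-]\notin K_0$, which together furnish a uniform positive lower bound on legal weight over $K_0$, converting pointwise convergence into uniform convergence $\varphi^m(K_0)\subset U$ for $m\geq M_0$. The statement for $\varphi^{-1}$ is symmetric, since $\varphi^{-1}$ is also a hyperbolic iwip with the roles of $\mu_+$ and $\mu_-$ interchanged; the alternative proof via the Kapovich--Lustig intersection form replaces the legal-weight bookkeeping above with the cleaner statement that the zero set of $\langle T_\pm,\cdot\rangle$ is projectively a single point.
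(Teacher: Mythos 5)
Your proposed \emph{legal weight} functional---the total mass a current assigns to bi-infinite legal paths---does not do the job you are asking of it, and this is where the argument breaks. Consider a rational current $\eta_w$ where the reduced circuit $c$ in $\Gamma$ representing $[w]$ contains at least one illegal turn. The current $\eta_w$ is supported on the $F_N$-translates of the periodic bi-infinite line $\cdots ccc\cdots$, which is not a legal path, so the legal weight of $\eta_w$ is zero. The same holds for $\varphi^n(\eta_w) = \eta_{\varphi^n(w)}$ for every $n$, since $[f^n(c)]$ still contains illegal turns for generic $c$. Yet $\varphi^n([\eta_w]) \to [\mu_+]$ for all such $[w] \ne [\mu_-]$. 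So your claimed dichotomy---that the legal weight vanishes precisely when $[\eta] = [\mu_-]$---is false: the zero set of legal weight is enormous (it contains a dense set of rational currents), and the whole reduction collapses. Relatedly, your convergence claim in the Perron--Frobenius step is only about $\lambda^{-n}\varphi^n(\eta)$; when legal weight is zero this quantity tends to $0$, which says nothing about the projective limit of $\varphi^n([\eta])$, so the step does not actually detect the basin of attraction. There is also a secondary issue: legal weight is the measure of a closed set in $\partial^2 F_N$ and is therefore only upper semicontinuous under weak-* convergence, not continuous, so the compactness argument producing a uniform positive lower bound on $K_0$ does not go through as stated.

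The paper's train-track proof avoids this by working with a different and more delicate quantity, the \emph{goodness} $\gamma([w])$: the fraction of edges in $c$ lying at distance at least $C = C_f/(\lambda'-1)$ from every illegal turn, extended to all currents as a finite sum
\[
\gamma(\nu) = \frac{1}{2\|\nu\|_\Gamma}\sum_{\substack{\ell_\Gamma(v)=2C+1\\ v \text{ legal}}}\langle v, \nu\rangle,
\]
which \emph{is} continuous. Unlike legal weight, goodness can be positive for rational currents with illegal turns, and the key facts driving the proof are that goodness tends to $1$ under forward iteration whenever it is bounded away from $0$ (Lemma \ref{biggood}), and a dichotomy (Lemma \ref{dicho}) between goodness becoming positive under $\varphi$ and the number of illegal turns strictly decreasing under $\varphi^{-1}$. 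The hyperbolicity hypothesis enters precisely in analysing illegal turns at INPs in that dichotomy. Your closing remark about the intersection form is essentially correct and matches the paper's second proof (Section \ref{alternative}), where the crucial step is Lemma \ref{erg1}: $\langle T_-, \mu\rangle = 0$ forces $\mathrm{supp}(\mu) \subset L_{\mathrm{BFH}}(\varphi)$, which by unique ergodicity gives $[\mu]=[\mu_+]$. But that argument does not rescue the legal-weight bookkeeping; it replaces it entirely.
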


The fact that hyperbolic iwips act on $\mathbb{P}Curr(F_N)$ with uniform north-south dynamics plays an important role in various recent applications, particularly the work of Bestvina- Feighn \cite{BF10}, Hamenst\"adt \cite{H}, Kapovich-Lustig \cite{KL5} and  Clay-Pettet \cite{CP}. 
The proof in R. Martin's thesis \cite{Martin} of uniform north-south dynamics for hyperbolic iwips is missing some important details, and some arguments there are not quite correct. In particular, the definition of the\emph{``goodness function"} used there is not quite the right one for carrying out the proof. In \cite{BF10} Bestvina and Feighn use a modified notion of \emph{``goodness"} that does work, and sketch an argument for proving Theorem \ref{mainthm}. 
Because of the importance of Theorem \ref{mainthm} in the subject, we provide a complete and detailed proof of it here; in fact, we give two different proofs. The first proof is based on elaborated and corrected version of Reiner Martin's original approach, using only the train-track technology. The second, new, proof uses unique ergodicity results from symbolic dynamics as well as some recently developed technology such as the theory of dual algebraic laminations for $\mathbb{R}$-trees \cite{CHL1, KL3, KL6}, and the Kapovich-Lustig intersection form \cite{KL2,KL3}.

\begin{ack*} I would like to thank my advisors Ilya Kapovich and Chris Leininger for very useful discussions, support and encouragement during the completion of this work. Many thanks also go to Martin Lustig for useful suggestions and his interest in this work. Finally, I would like to thank the anonymous referee for a very careful reading and extensive comments which improved the exposition.  
\end{ack*}

\section{Preliminaries}
\subsection{Geodesic Currents}

Let $F_N$ be a non-abelian free group of rank $N\ge2$. Let $\partial F_N$ denote the Gromov boundary of $F_N$, and $\partial^{2}F_N$ denote the \emph{double boundary} of $F_N$. Concretely,
\[
\partial^{2}F_N:=\{(x,y)\mid x,y\in\partial F_N,\ and\ x\ne y\}.
\]  
Define the \emph{flip map} $\alpha_f:\partial^{2}F_N\to\partial^{2}F_N$ by $\alpha_f(x,y)=(y,x)$. A \emph{geodesic current} $\mu$ on $F_N$ is a non-negative Radon measure on $\partial^{2}F_N$, which is invariant under the action of $F_N$ and $\alpha_f$. The space of geodesic currents on $F_N$, denoted by $Curr(F_N)$, is given the weak-* topology. Consequently, given $\mu_n,\mu\in Curr(F_N)$, $\lim_{n\to\infty}\mu_n=\mu$ if and only if $\lim_{n\to\infty}\mu_n(S_1\times S_2)=\mu(S_1\times S_2)$ for all disjoint closed-open subsets $S_1,S_2\subset\partial F_N$. 

As a simple example of a geodesic current consider the \emph{counting current} $\eta_g$, where $1\ne g\in F_N$ is not a proper power: For a Borel subset $S$ of $\partial^{2}F_N$, define $\eta_g(S)$ to be the number of $F_N$-translates of $(g^{-\infty}, g^{\infty})$ and $(g^{\infty}, g^{-\infty})$ that are contained in $S$. For any non-trivial element $h\in F_N$, write $h=g^{k}$ where $g$ is not a proper power, and define $\eta_h:=k\eta_g$. Any non-negative scalar multiple of a counting current is called a \emph{rational current}. It is known that, the set of rational currents is dense in $Curr(F_N)$, \cite{Ka1,Ka2}.  

An automorphism $\varphi\in Aut(F_N)$ induces a homeomorphism on both $\partial F_N$ and $\partial^{2}F_N$, which we also denote $\varphi$. Given an automorphism $\varphi\in Aut(F_N)$ and a geodesic current $\mu\in Curr(F_N)$ define the current $\varphi\mu$ as follows: For a Borel subset $S\subset\partial^{2}F_N$,
\[
\varphi\mu(S):=\mu(\varphi^{-1}(S)).
\]
It is easy to see that, for $\varphi\in Aut(F_N)$ and $\mu\in Curr(F_N)$, 
\[
(\varphi,\mu)\mapsto\varphi\mu
\]
defines a left action of $Aut(F_N)$ on $Curr(F_N)$, which is continuous and linear, \cite{Ka2}. Moreover, the inner automorphisms of $F_N$, $Inn(F_N)$, acts trivially and hence the action factors through $Out(F_N)=Aut(F_N)\big/Inn(F_N)$. 

The space of \emph{projectivized geodesic currents}, denoted by $\mathbb{P}Curr(F_N)$, is the quotient of $Curr(F_N)\smallsetminus\{0\}$, where two non-zero currents $\mu_1$ and $\mu_2$ are equivalent if there exists a positive real number $r$ such that $\mu_1=r\mu_2$.  The equivalence class of a geodesic current $\mu$ in $\mathbb{P}Curr(F_N)$ is denoted by $[\mu]$. 

For $\varphi\in Aut(F_N)$ and $[\mu]\in\mathbb{P}Curr(F_N)$, setting $\varphi[\mu]:=[\varphi\mu]$ gives well defines actions of $Aut(F_N)$ and $Out(F_N)$ on the space of projectivized geodesic currents $\mathbb{P}Curr(F_N)$. 

The \emph{rose} $R_N$ with $N$ petals is a finite graph with one vertex $q$, and $N$ edges attached to the vertex $q$. We identify the fundamental group $\pi_1(R_N,q)$ with $F_N$ via the isomorphism obtained by orienting and ordering the petals and sending the homotopy class of the $j^{th}$ oriented petal to $j^{th}$ generator of $F_N$. A \emph{marking} on $F_N$ is pair $(\Gamma,\alpha)$ where $\Gamma$ is a finite, connected graph with no valence-one vertices such that $\pi_1(\Gamma)\cong F_N$ and  $\alpha:(R_N,q)\to(\Gamma,\alpha(q))$ is a homotopy equivalence. The map $\alpha$ induces an isomorphism $\alpha_{*}:\pi_1(R_N,q)\to\pi_1(\Gamma,p)$ on the level of fundamental groups. The induced map $\alpha_{*}$ gives rise to natural $F_N$-equivariant homeomorphisms $\tilde{\alpha}:\partial F_N\to\partial \tilde{\Gamma}$ and $\partial^{2}\alpha:\partial^2F_N\to\partial^2\tilde{\Gamma}$. 

The \emph{cylinder set} associated to a reduced edge-path $\gamma$ in $\tilde{\Gamma}$ (with respect to the marking $\alpha$) is defined as follows: 
\[
Cyl_\alpha(\gamma):=\{(x,y)\in\partial^2F_N\mid \gamma\subset[\tilde{\alpha}(x), \tilde{\alpha}(y)]\},
\]
where $[\tilde{\alpha}(x), \tilde{\alpha}(y)]$ is the geodesic from $\tilde{\alpha}(x)$ to $\tilde{\alpha}(y)$ in $\tilde{\Gamma}$. 

Let $v$ be a reduced edge-path in $\Gamma$, and $\gamma$ be a lift of $v$ to $\tilde{\Gamma}$. Then, we set
\[
\left<v,\mu\right>_\alpha:=\mu(Cyl_\alpha(\gamma))
\]
and call $\left<v,\mu\right>_\alpha$ the number of \emph{occurrences} of $v$ in $\mu$. It is easy to see that the quantity $\mu(Cyl_{\alpha}(\gamma))$ is invariant under the action of $F_N$, so the right-hand side of the above formula does not depend on the choice of the lift $\gamma$ of $v$. Hence, $\left<v,\mu\right>_\alpha$ is well defined. In \cite{Ka2}, it was shown that, if we let $\mathcal{P}\Gamma$ denote the set of all finite reduced edge-paths in $\Gamma$, then a geodesic current is uniquely determined by the set of values $(\left<v,\mu\right>_\alpha)_{v\in\mathcal{P}\Gamma}$. In particular, given $\mu_n,\mu\in Curr(F_N)$, $\lim_{n\to\infty}\mu_n=\mu$ if and only if $\lim_{n\to\infty}\left<v,\mu_n\right>_\alpha=\left<v,\mu\right>_\alpha$ for every $v\in\mathcal{P}\Gamma$. 

Given a marking $(\Gamma,\alpha)$, the \emph{weight} of a geodesic current $\mu\in Curr(F_N)$ with respect to $(\Gamma,\alpha)$ is denoted by $w_\Gamma(\mu)$ and defined as 
\[
w_\Gamma(\mu):=\dfrac{1}{2}\sum_{e\in E\Gamma}\left<e,\mu\right>_\alpha,
\]
where $E\Gamma$ is the set of oriented edges of $\Gamma$. 
In \cite{Ka2}, using the concept of weight, Kapovich gives a useful criterion for convergence in $\mathbb{P}Curr(F_N)$. 

\begin{lem} Let $[\mu_n],[\mu]\in\mathbb{P}Curr(F_N)$, and $(\Gamma,\alpha)$ be a marking. Then,
\[
\lim_{n\to\infty}[\mu_n]=[\mu]
\] if and only if for every $v\in\mathcal{P}\Gamma$,
\[
\lim_{n\to\infty}\dfrac{\left<v,\mu_n\right>_\alpha}{w_{\Gamma}(\mu_n)}=\dfrac{\left<v,\mu\right>_\alpha}{w_{\Gamma}(\mu)}.
\]
\end{lem}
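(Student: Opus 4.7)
The plan is to reduce both directions to the preceding characterization, quoted from \cite{Ka2}, that a current $\mu \in Curr(F_N)$ is uniquely determined by, and convergence $\mu_n \to \mu$ in $Curr(F_N)$ is equivalent to, convergence of all the edge-path counts $\langle v, \mu \rangle_\alpha$ for $v \in \mathcal{P}\Gamma$. The weight $w_\Gamma$ is a finite linear combination of the functionals $\mu \mapsto \langle e, \mu \rangle_\alpha$ for $e \in E\Gamma$, so it is continuous and linear on $Curr(F_N)$, and the main preliminary is to check positivity.

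First I would verify that $w_\Gamma(\mu) > 0$ whenever $\mu \neq 0$, so that the quotients in the statement are defined. Indeed, if $\mu \neq 0$ then by the uniqueness result some $\langle v, \mu \rangle_\alpha > 0$ for a nonempty reduced edge-path $v$ in $\Gamma$, and monotonicity of $\mu$ on cylinder sets forces $\langle e, \mu \rangle_\alpha > 0$ for each edge $e$ traversed by $v$; summing over $E\Gamma$ gives $w_\Gamma(\mu) > 0$. This also covers $\mu$ itself, so the right-hand side in the statement makes sense.

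For the forward direction, suppose $[\mu_n] \to [\mu]$. By definition of the quotient topology there exist scalars $c_n > 0$ with $c_n \mu_n \to \mu$ in $Curr(F_N)$. By the cited characterization this means $c_n \langle v, \mu_n \rangle_\alpha \to \langle v, \mu \rangle_\alpha$ for every $v \in \mathcal{P}\Gamma$; summing over $v = e \in E\Gamma$ and using linearity of $w_\Gamma$ gives $c_n w_\Gamma(\mu_n) \to w_\Gamma(\mu) > 0$. Dividing the two limits yields
\[
\frac{\langle v, \mu_n \rangle_\alpha}{w_\Gamma(\mu_n)} = \frac{c_n \langle v, \mu_n \rangle_\alpha}{c_n w_\Gamma(\mu_n)} \longrightarrow \frac{\langle v, \mu \rangle_\alpha}{w_\Gamma(\mu)}.
\]

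For the backward direction, normalize by setting $\nu_n := \mu_n / w_\Gamma(\mu_n)$ and $\nu := \mu / w_\Gamma(\mu)$; note $[\nu_n] = [\mu_n]$ and $[\nu] = [\mu]$. The hypothesis rewrites as $\langle v, \nu_n \rangle_\alpha \to \langle v, \nu \rangle_\alpha$ for every $v \in \mathcal{P}\Gamma$, which by the cited characterization is exactly $\nu_n \to \nu$ in $Curr(F_N)$. Since the projection $Curr(F_N) \setminus \{0\} \to \mathbb{P}Curr(F_N)$ is continuous and $\nu \neq 0$, we conclude $[\mu_n] = [\nu_n] \to [\nu] = [\mu]$. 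The only nontrivial point is the positivity of $w_\Gamma$ on nonzero currents; everything else is a direct consequence of the previously stated cylinder-set criterion and the linearity of the weight.
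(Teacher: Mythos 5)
The paper does not actually prove this lemma; it is quoted directly from \cite{Ka2} as a known convergence criterion, so there is no in-paper proof to compare against. Your argument is the natural one and is correct in substance: reduce projective convergence to honest convergence in $Curr(F_N)$ via the continuous, homogeneous normalization $w_\Gamma$, then invoke the cylinder-set characterization of convergence of currents. The preliminary positivity check is exactly the right thing to isolate, and your monotonicity argument (a cylinder for a nonempty reduced path $v$ is contained in the cylinder for its first edge, so $\left<v,\mu\right>_\alpha>0$ forces $\left<e_1,\mu\right>_\alpha>0$ and hence $w_\Gamma(\mu)>0$) is correct.

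One imprecision worth flagging is the phrase in the forward direction, \emph{``by definition of the quotient topology there exist scalars $c_n>0$ with $c_n\mu_n\to\mu$.''} That is not literally a feature of quotient topologies: a general quotient map need not be sequence-covering, so convergence of $[\mu_n]$ does not, on formal grounds alone, lift to convergence of suitable representatives. What makes the lifting valid here is precisely the content of your preliminary step: since $w_\Gamma$ is continuous, positively homogeneous, and strictly positive on $Curr(F_N)\smallsetminus\{0\}$, the retraction $\nu\mapsto\nu/w_\Gamma(\nu)$ onto the slice $\{w_\Gamma=1\}$ is continuous and constant on rays, hence descends to a continuous inverse of the quotient map restricted to that slice. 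From $[\mu_n]\to[\mu]$ one then gets $\mu_n/w_\Gamma(\mu_n)\to\mu/w_\Gamma(\mu)$ in $Curr(F_N)$, and one may take $c_n=w_\Gamma(\mu)/w_\Gamma(\mu_n)$. With that justification spelled out, the two directions are symmetric and the proof is complete.
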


\subsection {One-Sided Shifts}\label{shifts} The following correspondence is explained in detail in \cite{Ka2}. Here, we briefly recall the relevant definitions and results from there. 

Let $(\Gamma,\alpha)$ be a marking. Let $\Omega(\Gamma)$ denote the set of semi-infinite reduced edge-paths in $\Gamma$. Let $T_\Gamma:\Omega(\Gamma)\to\Omega(\Gamma)$ be the shift map, which erases the first edge of a given edge-path. 

Define the one sided cylinder $Cyl_{\Omega}(v)$ for an edge-path $v$ in $\Gamma$ to be the set of all $\gamma\in\Omega(\Gamma)$ such that $\gamma$ starts with $v$. It is known that the set $\{Cyl_{\Omega}(v)\}_{v\in\mathcal{P}\Gamma}$ generates the Borel $\sigma$-algebra for $\Omega(\Gamma)$, \cite{Ka2}. 

Let $\mathcal{M}(\Omega(\Gamma))$ denote the space of finite, positive Borel measures on $\Omega(\Gamma)$ that are $T_\Gamma$-invariant. Define $\mathcal{M}'(\Omega(\Gamma))\subset\mathcal{M}(\Omega(\Gamma))$ to be the set of all $\nu\in\mathcal{M}(\Omega(\Gamma))$ that are symmetric, i.e. for any reduced edge path $v$ in $\Gamma$, 
\[
\nu(Cyl_{\Omega}(v))=\nu(Cyl_{\Omega}(v^{-1})). 
\]
\begin{prop}\cite{Ka2} The map $\tau:Curr(F_N)\to\mathcal{M}'(\Omega(\Gamma))$ defined as 
\[
\mu\mapsto\tau\mu,
\]
where $\tau\mu(Cyl_{\Omega}(v))=\left<v,\mu\right>_\alpha$ is an affine homeomorphism. 
\end{prop}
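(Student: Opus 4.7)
The plan is to construct $\tau\mu$ by prescribing its values on the generating semi-algebra of cylinder sets in $\Omega(\Gamma)$ via the formula $\tau\mu(Cyl_\Omega(v)) := \langle v, \mu\rangle_\alpha$, apply the Caratheodory/Kolmogorov extension theorem to obtain a Borel measure, and then verify each required structural property by translating measure-theoretic statements back and forth between $\partial^2F_N$ and $\Omega(\Gamma)$.

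First, for well-definedness, the essential finite-additivity check is that, for any reduced edge-path $v$ in $\Gamma$ ending at a vertex $q$,
\[
\langle v, \mu\rangle_\alpha \;=\; \sum_e \langle ve, \mu\rangle_\alpha,
\]
where $e$ ranges over oriented edges at $q$ for which $ve$ is reduced. Lifting to $\tilde\Gamma$, this is the partition $Cyl_\alpha(\gamma) = \bigsqcup_e Cyl_\alpha(\gamma\tilde e)$, valid because every bi-infinite reduced geodesic through $\gamma$ extends past its forward endpoint in exactly one reduced way; $\sigma$-additivity of $\mu$ then delivers the identity. The analogous partition by backward one-edge extensions gives $T_\Gamma$-invariance of $\tau\mu$, and symmetry of $\tau\mu$ follows from $\alpha_f$-invariance of $\mu$ together with the observation that reversing a lift of $v$ is a lift of $v^{-1}$. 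Affinity is immediate from linearity of $\mu \mapsto \langle v,\mu\rangle_\alpha$, and continuity of $\tau$ translates directly into the weak-$*$ convergence criterion for currents recorded in the preceding lemma.

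Injectivity is immediate from the fact that a current is determined by the values $\langle v,\mu\rangle_\alpha$. The substantive step is surjectivity: given $\nu \in \mathcal{M}'(\Omega(\Gamma))$, I would define a set function $\mu$ on cylinders in $\partial^2F_N$ by $\mu(Cyl_\alpha(\gamma)) := \nu(Cyl_\Omega(v))$, where $v$ is the projection of $\gamma$ to $\Gamma$. Independence of the chosen lift forces $F_N$-invariance for free. Forward-extension additivity of $\mu$ follows from countable additivity of $\nu$ applied to $Cyl_\Omega(v) = \bigsqcup_e Cyl_\Omega(ve)$; backward-extension additivity follows from shift-invariance of $\nu$ applied to $T_\Gamma^{-1}(Cyl_\Omega(v)) = \bigsqcup_e Cyl_\Omega(ev)$; and $\alpha_f$-invariance of the extended measure is exactly the symmetry of $\nu$. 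Caratheodory extension then produces a current $\mu$ with $\tau\mu = \nu$, and continuity of $\tau^{-1}$ follows from the same cylinder-convergence criterion used for $\tau$.

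The most delicate step is this surjectivity: one must verify that the cylinder data prescribed by $\nu$ are consistent across every way of subdividing a cylinder in $\partial^2F_N$, and it is precisely here that both the shift-invariance and the symmetry hypotheses built into $\mathcal{M}'(\Omega(\Gamma))$ become indispensable. Everything else is a routine verification once the cylinder-to-cylinder correspondence has been properly set up.
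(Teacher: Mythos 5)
The paper does not supply its own proof of this proposition; it simply cites Kapovich's work~\cite{Ka2}, where the argument appears. Your outline reproduces the standard argument from that reference faithfully: prescribe the measure on the generating semi-algebra of one-sided cylinders, check the forward and backward switch conditions (which are exactly the finite-additivity identities $\langle v,\mu\rangle_\alpha = \sum_e\langle ve,\mu\rangle_\alpha = \sum_e\langle ev,\mu\rangle_\alpha$), observe that backward additivity encodes $T_\Gamma$-invariance and that $\alpha_f$-invariance of $\mu$ translates to symmetry of $\tau\mu$, and invert by running the construction backward. Injectivity follows from the fact that a current is determined by its cylinder values, and bicontinuity follows from the cylinder-convergence characterizations on both sides, exactly as you say.

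Two small points that a fully written-out proof should not elide. First, the convergence criterion you invoke for continuity of $\tau$ is the unprojectivized one stated just before the lemma in the paper (``$\mu_n\to\mu$ iff $\langle v,\mu_n\rangle_\alpha\to\langle v,\mu\rangle_\alpha$ for all $v$''), not the ``preceding lemma,'' which is the projectivized version. Second, in the surjectivity step, passing from a finitely additive set function on cylinders to a genuine Radon measure on $\partial^2 F_N$ requires the observation that the cylinders $Cyl_\alpha(\gamma)$ are compact open, which gives countable additivity on the generating semi-ring automatically and also makes the extension locally finite; and one must confirm $F_N$-equivariance and flip-invariance of the set function on the semi-ring before extending (which you do), so that the extension inherits them. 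Neither issue affects the correctness of the plan -- they are routine once noted -- but they are precisely where the ``delicate step'' you flag actually lives.
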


\subsection{Outer Space and  Intersection Form} \label{outerspace}The space of minimal, free and discrete isometric actions of $F_N$ on $\mathbb{R}$-trees (up to $F_N$-equivariant isometry) is called the \emph{unprojectivized Outer Space} and denoted by $cv_N$, \cite{CV}. There are several topologies on the Outer Space that are known to coincide, in particular the Gromov-Hausdorff convergence topology and the length function topology. It is known that every point $T\in cv_N$ is uniquely determined by its \emph{translation length function} $\|.\|_T:F_N\to\mathbb{R}$, where $\|g\|_T:=\min_{x\in T}d_T(x,gx)$. The closure $\overline{cv}_N$ of the outer space in the space of length functions consists of the (length functions of) very small, minimal, isometric actions of $F_N$ on $\mathbb{R}$--trees; \cite{BF93,CL95}. The \emph{projectivized Outer Space} $CV_N:=\mathbb{P}cv_N$ is the quotient of $cv_N$ where two points $T_1,T_2\in cv_N$ are equivalent if the respective length functions are positive scalar multiples of each other. Similarly, one can define  $\overline{CV}_N:=\mathbb{P}\overline{cv}_N$, where two points $T_1,T_2\in\overline{cv}_N$ are equivalent if $T_1=aT_2$ for some $a>0$. 

The group $Aut(F_N)$ has a continuous right action on $\overline{cv}_N$ (that leaves $cv_N$ invariant), which on the level of translation length functions defined as follows: For $\varphi\in Aut(F_N)$, and $T\in\overline{cv}_N$,
\[
\|g\|_{T\varphi}=\|\varphi(g)\|_T.
\] 
It is easy to see that $Inn(F_N)$ is in the kernel of this action, hence the above action factors through $Out(F_N)$.

Note that the above actions of $Aut(F_N)$ and $Out(F_N)$ descend to well-defined actions on $\overline{CV}_N$ (that leaves $CV_N$ invariant) by setting $[T]\varphi:=[T\varphi]$. 

An important tool relating geodesic currents to the Outer Space, which will be crucial in Section \ref{alternative}, is the Kapovich-Lustig Intersection form.

\begin{prop} \cite{KL2} \label{intersectionform}There exists a unique continuous map $\left<,\right>:\overline{cv}_N\times Curr(F_N)\to\mathbb{R}_{\ge0}$ with the following properties:
\begin{enumerate}
\item For any $T\in\overline{cv}_N$, $\mu_1,\mu_2\in Curr(F_N)$ and $c_1,c_2\ge0$, we have 
\[
\left<T,c_1\mu_1+c_2\mu_2\right>=c_1\left<T,\mu_1\right>+c_2\left<T,\mu_2\right>.
\]
\item For any $T\in\overline{cv}_N$, $\mu\in Curr(F_N)$ and $c\ge0$, we have\[
\left<cT,\mu\right>=c\left<T,\mu\right>.
\]
\item For any $T\in\overline{cv}_N$, $\mu\in Curr(F_N)$ and $\varphi\in Out(F_N)$,
\[
\left<T\varphi,\mu\right>=\left<T,\varphi\mu\right>.
\]
\item For any $T\in\overline{cv}_N$ and any nontrivial $g\in F_N$,
\[
\left<T,\eta_g\right>=\|g\|_T. 
\]
\end{enumerate} 
\end{prop}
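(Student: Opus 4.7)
My plan is to split the proof into uniqueness and existence, handling existence first on $cv_N$ via an explicit formula and then by a continuous extension to the boundary.

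\textbf{Uniqueness.} Fix $T\in\overline{cv}_N$. By property (4), $\langle T,\eta_g\rangle=\|g\|_T$ is prescribed for every non-proper-power $g\in F_N$, and by linearity (1) the value of $\langle T,\cdot\rangle$ is then prescribed on every rational current. Since rational currents are dense in $Curr(F_N)$ and we are requiring continuity, the map $\mu\mapsto\langle T,\mu\rangle$ is determined for each $T$, hence on all of $\overline{cv}_N\times Curr(F_N)$.

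\textbf{Existence on $cv_N$.} For $T\in cv_N$ choose a marked metric graph $(\Gamma,\alpha,\ell)$ representing $T$ and set
\[
\langle T,\mu\rangle:=\frac{1}{2}\sum_{e\in E\Gamma}\ell(e)\,\langle e,\mu\rangle_\alpha.
\]
First I would check independence from the choice of marked graph: subdivisions of edges and changes of spanning tree preserve the total edge-length of every reduced loop, hence preserve the right-hand side by linearity. Property (1) is immediate; property (2) is the scaling of $\ell$; property (4) holds because the cyclically reduced loop representing $g$ in $\Gamma$ crosses each oriented edge $e$ exactly $\langle e,\eta_g\rangle_\alpha$ times, and its total $\ell$-length is $\|g\|_T$. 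Property (3) follows from how $\varphi\in Out(F_N)$ relabels the cylinder sets: pulling back the marking by $\varphi$ reindexes the occurrence counts in a way equivalent to pushing $\mu$ forward by $\varphi$. Continuity on $cv_N\times Curr(F_N)$ is local: on each open simplex of $cv_N$ the combinatorial marking is fixed, so the formula is continuous in $(\ell,\mu)$ since each $\mu\mapsto\langle e,\mu\rangle_\alpha$ is continuous.

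\textbf{Extension to $\overline{cv}_N$.} This is the main obstacle, since boundary trees may be non-simplicial and admit no direct edge-length formula. The plan is to extend by limits: given $T\in\overline{cv}_N$, pick $T_n\in cv_N$ with $T_n\to T$ in the length-function topology and propose
\[
\langle T,\mu\rangle:=\lim_{n\to\infty}\langle T_n,\mu\rangle .
\]
Convergence on rational currents is automatic from property (4) and the definition of the topology on $\overline{cv}_N$, since $\langle T_n,c\eta_g\rangle=c\|g\|_{T_n}\to c\|g\|_T$. The delicate step is to promote this to convergence on every $\mu$, independently of the sequence $(T_n)$. The key is a uniform estimate
\[
\bigl|\langle T_n,\mu\rangle-\langle T_m,\mu\rangle\bigr|\le \varepsilon(n,m)\,w_{\Gamma_0}(\mu),
\]
with $\varepsilon(n,m)\to 0$, which I would obtain from the existence of equivariant Lipschitz maps between nearby points of $\overline{cv}_N$: the Lipschitz constant controls translation lengths of all conjugacy classes simultaneously, and through the edge-length formula this controls the intersection pairing against any current of bounded weight. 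Once the extension is shown to exist, linearity, homogeneity, equivariance, and the length-function identity all pass to the limit; joint continuity on $\overline{cv}_N\times Curr(F_N)$ follows by combining the uniform tree-variable estimate with the current-variable continuity already established on each $T_n$. The hard part throughout is the uniform Lipschitz comparison across the boundary, because trees in $\overline{cv}_N$ can degenerate in complicated ways; everything else reduces to density and the explicit formula on $cv_N$.
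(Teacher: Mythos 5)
The paper does not prove Proposition \ref{intersectionform}; it is quoted verbatim from Kapovich--Lustig \cite{KL2}, so there is no in-paper argument to compare against. Judged on its own terms, your blind attempt is sound in its first two stages but has a real gap in the third, and the gap sits exactly at the point that makes this theorem nontrivial.

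Your uniqueness argument is correct and standard: (4) and (1) determine $\langle T,\cdot\rangle$ on rational currents, and density of rational currents together with continuity forces uniqueness. Your existence formula on $cv_N$,
\[
\langle T,\mu\rangle=\tfrac12\sum_{e\in E\Gamma}\ell(e)\,\langle e,\mu\rangle_\alpha,
\]
is indeed the right one, and properties (1), (2), (4) and continuity on each open simplex of $cv_N$ follow as you say (property (3) takes a little bookkeeping with markings, but the idea is fine).

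The gap is in the extension to $\overline{cv}_N$. You acknowledge the need for a uniform estimate
\[
\bigl|\langle T_n,\mu\rangle-\langle T_m,\mu\rangle\bigr|\le\varepsilon(n,m)\,w_{\Gamma_0}(\mu)
\]
and propose to get it from equivariant Lipschitz maps between nearby points of $\overline{cv}_N$. This is where the plan breaks. Convergence $T_n\to T$ in the length-function topology gives, at best, \emph{one-sided} Lipschitz control: one can produce $F_N$-equivariant maps $T_n\to T$ whose Lipschitz constants tend to $1$ (because $\|g\|_T\le(1+o(1))\|g\|_{T_n}$), but when $T$ is a boundary tree there is in general no equivariant map $T\to T_n$ with bounded constant, and the quantity $\sup_g\|g\|_{T_n}/\|g\|_T$ typically blows up (indeed $\|g\|_T$ can vanish for nontrivial $g$ when $T$ is very small but not free). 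So the two-sided comparison your estimate requires is not available, and the proposed construction does not show that $\lim_n\langle T_n,\mu\rangle$ exists, is finite, or is independent of the approximating sequence. Worse, continuity in the tree variable at boundary points is genuinely delicate: this is the content of the Kapovich--Lustig theorem, and their proof does not proceed by taking limits of the simplicial formula. They instead build the pairing directly for trees $T\in\overline{cv}_N$ using an $F_N$-equivariant map from a Cayley graph to $T$ with the bounded backtracking property, define $\langle T,\mu\rangle$ as an integral of a ``slope'' function against $\mu$ over a fundamental domain in $\partial^2 F_N$, and then prove continuity using the BBT constant and the structure theory of very small actions (and, for degenerate directions, the dual lamination $L(T)$). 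In short, your outline correctly isolates where the difficulty lies, but the mechanism you propose to cross the boundary does not work; a direct construction on $\overline{cv}_N$, rather than a limiting argument from $cv_N$, is needed.
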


\subsection{Laminations}\label{laminations}

An \emph{algebraic lamination} on $F_N$ is a closed subset of $\partial^{2}F_N$ which is flip-invariant and $F_N$-invariant. In analogy with the geodesic laminations on surfaces, the elements $(X,Y)$ of an algebraic lamination are called \emph{leaves} of the lamination. The set of all algebraic laminations on $F_N$ is denoted by $\Lambda^{2}F_N$. 

Let $(\Gamma,\alpha)$ be a marking. For $(X,Y)\in\partial^{2}F_N$, let us denote the bi-infinite geodesic in $\tilde{\Gamma}$ joining $\tilde{\alpha}(X)$ to $\tilde{\alpha}(Y)$ by $\tilde{\gamma}$. The reduced bi-infinite path $\gamma$, which is the image of $\tilde{\gamma}$ under the covering map, is called the \emph{geodesic realization} of the pair $(X,Y)$ and is denoted by $\gamma_{\Gamma}(X,Y)$.   

We say that a set $\mathcal{A}$ of reduced edge paths in $\Gamma$ \emph{generates} a lamination $L$ if the following condition holds: For any $(X,Y)\in\partial^{2}F_N$, $(X,Y)$ is a leaf of $L$ if and only if every reduced subpath of the geodesic realization of $(X,Y)$ belongs to $\mathcal{A}$. 

Here we describe several important examples of algebraic laminations, all of which will be used in Section \ref{alternative}. 

\begin{example}[Diagonal closure of a lamination] The following construction is due to Kapovich-Lustig, see \cite{KL6} for details. For a subset $S$ of $\partial^{2}F_N$ the diagonal extension of $S$, $diag(S)$, is defined to be the set of all pairs $(X,Y)\in\partial^{2}F_N$ such that there exists an integer $n\ge1$ and elements $X_1=X, X_2,\dotsc X_n=Y\in\partial F_N$ such that $(X_{i-1},X_i)\in S$ for $i=1,\dotsc,n-1$. It is easy to see that for a lamination $L\in\Lambda^{2}F_N$, diagonal extension of $L$, $diag(L)$ is still $F_N$ invariant and flip-invariant but it is not necessarily closed. Denote the closure of $diag(L)$ in $\partial^{2}F_N$ by $\overline{diag}(L)$. For an algebraic lamination $L\in\Lambda^{2}F_N$, the diagonal closure of $L$, $\overline{diag}(L)$ is again an algebraic lamination. 
\end{example}

\begin{example}[Support of a current]
Let $\mu\in Curr(F_N)$ be a geodesic current. The \emph{support} of $\mu$ is defined to be $supp(\mu):=\partial^{2}F_N\smallsetminus \mathcal{U}$ where $\mathcal{U}$ is the union of all open subsets $U\subset\partial^{2}F_N$ such that $\mu(U)=0$. For any $\mu\in Curr(F_N)$, $supp(\mu)$ is an algebraic lamination. Moreover,  it is not hard to see that $(X,Y)\in supp(\mu)$ if and only if for every reduced subword $v$ of the geodesic realization $\gamma_{\Gamma}(X,Y)$ of $(X,Y)$, we have $\left<v,\mu\right>_\alpha>0$, see \cite{KL3}.
\end{example}

\begin{example} If $(\Gamma,\alpha)$ is a marking, and $\mathcal{P}$ is a family of finite reduced paths in $\Gamma$, the lamination $L(\mathcal{P})$ \emph{``generated by $\mathcal{P}$"} consists of all $(X,Y)\in\partial^{2}F_N$ such that for every finite subpath $v$ of the geodesic realization of $(X,Y)$ in $\Gamma$, $\gamma_{\Gamma}(X,Y)$, there exists a path $v'$ in $\mathcal{P}$ such that $v$ is a subpath of $v'$ or of $(v')^{-1}$.
\end{example}

\begin{example}[Laminations dual to an $\mathbb{R}$-tree] \label{duallamination} Let $T\in\overline{cv}_N$. For every $\epsilon>0$ consider the set \[
\Omega_{\epsilon}(T)=\{1\ne[w]\in F_N\ :\|w\|_T\le\epsilon\}.
\] 

Given a marking $\Gamma$, define $\Omega_{\epsilon,\Gamma}(T)$  as the set of all closed cyclically reduced paths in $\Gamma$ representing conjugacy classes of elements of $\Omega_{\epsilon}(T)$. Define $L_{\epsilon,\Gamma}(T)$ to be the algebraic lamination generated by the family of paths $\Omega_{\epsilon,\Gamma}(T)$. Then, the \emph{dual algebraic lamination} $L(T)$ associated to $T$ is defined as:
\[
L(T):=\bigcap_{\epsilon>0}L_{\epsilon,\Gamma}(T).
\] 
It is known that this definition of $L(T)$ does not depend on the choice of a marking $\Gamma$. 
\end{example}

A detailed discussion about laminations can be found in a sequence of papers by Coulbois-Hilion-Lustig,
\cite{CHL1,CHL2,CHL3}. 

\subsection{IWIP Automorphisms}\label{iwipdefn}

An outer automorphism $\varphi\in Out(F_N)$ is called an iwip (short for irreducible with irreducible powers) if no positive power of $\varphi$ fixes a conjugacy class of a proper free factor of $F_N$. There are two types of iwips, both of which have their own importance. An iwip $\varphi\in Out(F_N)$ is called \emph{atoroidal} or \emph{hyperbolic} if it has no non-trivial periodic conjugacy classes. This is equivalent to saying that the mapping torus of $\varphi$, the group $G=F_N\rtimes_\varphi\mathbb{Z}$ is word-hyperbolic, \cite{BF,Brink}.

An iwip is called \emph{non-atoroidal} or \emph{geometric} otherwise. The name for geometric iwips comes from a theorem of Bestvina-Handel \cite{BH92}, which states that every non-atoroidal iwip $\varphi\in Out(F_N)$ is induced by a pseudo-Anosov homeomorphism on a compact, hyperbolic surface $S$ with one boundary component such that $\pi_1(S)\cong F_N$. 

\subsection{Relevant Results from Symbolic Dynamics}

Let $A=\{x_{1}, x_{2}, \dotsc, x_{m}\}$ be a finite set of letters. A \emph{substitution} $\zeta$ is a map from $A$ to $A^{*}$, the set of nonempty words in $A$. We also assume that for a substitution $\zeta$ the length of $\zeta(x)$ is strictly greater than $1$ for at least one $x\in A$. A substitution $\zeta$ induces a map from $A^{\mathbb{N}}$, the set of infinite words in $A$, to itself by
\[
\zeta(a_{n})=\zeta(a_{0})\zeta(a_{1})\dotsc
\]
This induced map is also called a $substitution$. In what follows it is assumed that (up to passing to a power):
\begin{enumerate}
\item For all $x\in A$, the length of $\zeta^{n}(x)$ goes to infinity as $n$ tends to infinity.  
\item There exists some $x\in A$ such that $\zeta(x)=x\dotsc$ 
\end{enumerate} 

Let $n_{x}(w)$ denote the number of $x$ occurring in the word $w$ and let $\vec{n}(w)$ denote the column vector whose coordinates are $n_{x}(w)$ for $x\in A$. More generally let us define $n_{w_0}(w)$ to be the number of letters in $w$ such that starting from that letter one can read of the word $w_0$ in $w$. \\
$\indent$ A substitution $\zeta$ is called \emph{irreducible} if for every $x,y\in A$ there exists an integer $k=k(x,y)$ such that $n_{x}(\zeta^{k}(y))\ge1$. $\zeta$ is called \emph{primitive} if $k$ can be chosen independent of $x,y\in A$. 
A nonnegative $r\times r$ matrix $M$ is said to be \emph{irreducible} if given any $1\leq i,j\le r$ there exists an integer $k=k(i,j)$ such that $m^{(k)}_{ij}>0$ where $m^{(k)}_{ij}$ is the $ij^{th}$ entry of the matrix $M^{k}$. $M$ is called \emph{primitive} if there exists a $k$ such that $M^{k}$ is a positive matrix.\\
$\indent$ The $\zeta$-matrix, $M(\zeta)$ or \emph{transition matrix for} the substitution $\zeta$ is the matrix whose $ij^{th}$ entry is given by $n_{x_{i}}(\zeta(x_{j}))$. It is easy to see that $\zeta$ is \emph{irreducible (resp. primitive)} if and only if $M(\zeta)$ is \emph{irreducible (resp. primitive)}. A word $w$ is called \emph{used} for $\zeta$ if $w$ appears as a subword of $\zeta^{n}(x_i)$ for some $n\ge1$ and $x_i\in A$. 

A reformulation and a generalization of classical Perron-Frobenius theorem is the following proposition due to Seneta \cite{SE} and a proof can be found in \cite[Proposition 5.9]{Q}. 

\begin{prop}\label{symbolic} Let $\zeta$ be a primitive substitution. Let $x\in A$. Then, 
\[
\lim_{n\to\infty}\frac{\vec{n}(\zeta^{n}(x))}{|\zeta^{n}(x)|}=\Upsilon,
\]
where $\Upsilon$ is a positive vector independent of $x$, and satisfying $\sum\limits_{x\in A}\Upsilon_{x}=1$. Similarly, for any used word $w$, the sequence of nonnegative numbers 
\[
\frac{n_{w}(\zeta^{n}(x))}{|\zeta^{n}(x)|}
\]
admits a limit which is independent of $x$ and positive.
\end{prop}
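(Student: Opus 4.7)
The plan is to deduce both statements from the Perron--Frobenius theorem, applied directly to $M = M(\zeta)$ for the first claim, and to an auxiliary $k$-block substitution for the second.

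For the first claim, the construction of $M$ yields the identity $\vec{n}(\zeta^{n}(x)) = M^{n}\vec{e}_{x}$, where $\vec{e}_{x}$ denotes the standard basis vector of $\mathbb{R}^{A}$ at $x$. Since $\zeta$ is primitive, $M$ is a primitive nonnegative matrix, so Perron--Frobenius provides a simple dominant eigenvalue $\lambda>1$ strictly greater in modulus than all others, with strictly positive left and right eigenvectors $\vec{r}^{\,T}$ and $\vec{l}$, normalized so that $\sum_{y}l_{y}=1$ and $\vec{r}\cdot\vec{l}=1$. One then has the rank-one asymptotic
\[
\lambda^{-n}M^{n}\;\longrightarrow\;\vec{l}\,\vec{r}^{\,T}\qquad (n\to\infty).
\]
Applying this to $\vec{e}_{x}$ gives $\lambda^{-n}\vec{n}(\zeta^{n}(x))\to r_{x}\vec{l}$, and summing coordinates yields $\lambda^{-n}|\zeta^{n}(x)|\to r_{x}$; taking the ratio, $\vec{n}(\zeta^{n}(x))/|\zeta^{n}(x)|\to\vec{l}$, which is a positive probability vector independent of $x$, the desired $\Upsilon$.

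For the claim on used words, fix a used word $w$ of length $k$. After replacing $\zeta$ by a sufficiently large power (which does not change asymptotic frequencies), assume $|\zeta(y)|>k$ for every $y\in A$. Let $A_{[k]}$ be the set of length-$k$ used words of $\zeta$ and define the \emph{$k$-block substitution} $\zeta_{[k]}\colon A_{[k]}\to A_{[k]}^{\ast}$ by sending a block $u_{1}u_{2}\cdots u_{k}$ to the consecutive list of length-$k$ subwords of $\zeta(u_{1})\zeta(u_{2})\cdots\zeta(u_{k})$ whose first letter lies in $\zeta(u_{1})$. A direct check using primitivity of $\zeta$ shows that $\zeta_{[k]}$ is again a primitive substitution on $A_{[k]}$. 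Letting $x_{0}\in A$ be the letter fixed by $\zeta$ (hypothesis (2)), $U\in A^{\mathbb{N}}$ the infinite fixed word $\lim_{n}\zeta^{n}(x_{0})$, and $u_{0}\in A_{[k]}$ the length-$k$ prefix of $U$, one sees that the length-$k$ subword recoding of $U$ is a fixed point of $\zeta_{[k]}$ starting with $u_{0}$, and $\zeta_{[k]}^{n}(u_{0})$ is its prefix of length $|\zeta^{n}(x_{0})|-k+1$. The crucial identification is that $n_{w}(\zeta^{n}(x_{0}))$ equals the number of times the letter $w\in A_{[k]}$ occurs in $\zeta_{[k]}^{n}(u_{0})$. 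Applying the first claim to $\zeta_{[k]}$ now gives convergence of $n_{w}(\zeta^{n}(x_{0}))/|\zeta^{n}(x_{0})|$ to a positive limit, positive because $w\in A_{[k]}$ and the Perron vector of $M(\zeta_{[k]})$ is strictly positive. Independence from the starting letter $x$ then follows by primitivity: choose $N$ so that both $x$ appears in $\zeta^{N}(x_{0})$ and $x_{0}$ appears in $\zeta^{N}(x)$, and compare the resulting frequency sequences, which are sandwiched together.

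The main technical obstacle is the correct setup of the block substitution: restricting $A_{[k]}$ to genuinely used blocks, verifying that $\zeta_{[k]}$ inherits primitivity, and establishing the exact identification of subword occurrences of $w$ in $\zeta^{n}(x_{0})$ with letter occurrences in $\zeta_{[k]}^{n}(u_{0})$. Once this bookkeeping is done, the whole statement reduces to the rank-one asymptotics of a primitive nonnegative matrix, and both parts flow immediately from the classical Perron--Frobenius theorem.
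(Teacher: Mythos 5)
The paper does not prove this proposition itself; it cites Seneta and Queff\'elec \cite[Prop.~5.9]{Q} and moves on, so there is no ``paper proof'' to match against. Your argument is nonetheless essentially the standard one and is correct in its main lines. The first part is exactly the rank-one Perron--Frobenius asymptotic: since $\vec{n}(\zeta^{n}(x))=M^{n}\vec{e}_{x}$ with $M=M(\zeta)$ primitive, $\lambda^{-n}M^{n}\to\vec{l}\,\vec{r}^{\,T}$ and the ratio collapses to the normalized right Perron eigenvector $\vec{l}=\Upsilon$. The reduction of the second part to the first via the $k$-block ($L$-block) substitution is also a standard and valid route. Two points deserve attention. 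First, the primitivity of $\zeta_{[k]}$ is a genuine lemma, not merely ``a direct check'': one must argue that for every used $k$-word $u$ and every $v=v_{1}\cdots v_{k}\in A_{[k]}$, $u$ occurs in $\zeta^{m}(v_{1})$ at a position at most $|\zeta^{m}(v_{1})|-k+1$ for all large $m$ (so that the occurrence is read off as a letter of $\zeta_{[k]}^{m}(v)$ and not lost to the boundary); this follows from primitivity of $\zeta$ and the unbounded growth of $|\zeta^{m}(v_{1})|$, but you should say so. Second, a minor bookkeeping slip: reading first letters shows $|\zeta_{[k]}^{n}(u_{0})|=|\zeta^{n}(x_{0})|$, not $|\zeta^{n}(x_{0})|-k+1$; correspondingly $n_{w}(\zeta^{n}(x_{0}))$ and the number of occurrences of the letter $w$ in $\zeta_{[k]}^{n}(u_{0})$ differ by a boundary term of size at most $k-1$, not exactly zero. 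Neither slip affects the limit, since $k$ is fixed and the lengths tend to infinity, but the ``exact identification'' you invoke is only an identification up to $O(1)$. With these two small repairs the proof is complete and matches the spirit of the cited source.
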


The proof of the above proposition also implies the following, which is Corollary 5.2 in \cite{Q}. 

\begin{prop}\label{stretch} For every $x\in A$, 
\[
\lim_{n\to\infty}\dfrac{|\zeta^{n+1}(x)|}{|\zeta^{n}(x)|}=\lambda,
\] where $\lambda$ is the Perron-Frobenius eigenvalue for $M(\zeta)$.  
\end{prop}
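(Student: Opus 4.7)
The plan is to read off the length recursion directly from the definition of the substitution and combine it with Proposition~\ref{symbolic}. First, since $\zeta^{n+1}(x)=\zeta(\zeta^{n}(x))$ is obtained by replacing each letter $a_i$ of $\zeta^{n}(x)=a_1a_2\cdots a_k$ by $\zeta(a_i)$, one has the basic identity
\[
|\zeta^{n+1}(x)|=\sum_{y\in A}n_{y}(\zeta^{n}(x))\,|\zeta(y)|.
\]
Dividing by $|\zeta^{n}(x)|$ (nonzero by assumption (1)) gives
\[
\frac{|\zeta^{n+1}(x)|}{|\zeta^{n}(x)|}=\sum_{y\in A}\frac{n_{y}(\zeta^{n}(x))}{|\zeta^{n}(x)|}\,|\zeta(y)|.
\]

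Next I would apply Proposition~\ref{symbolic} termwise: for each $y\in A$ the ratio $n_{y}(\zeta^{n}(x))/|\zeta^{n}(x)|$ converges to the coordinate $\Upsilon_{y}$ of the limit vector $\Upsilon$. Since $A$ is finite, the limit of the sum is the sum of the limits, so
\[
\lim_{n\to\infty}\frac{|\zeta^{n+1}(x)|}{|\zeta^{n}(x)|}=\sum_{y\in A}\Upsilon_{y}\,|\zeta(y)|.
\]

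The last step is to identify the right-hand side with $\lambda$. Observe that $|\zeta(y)|$ is precisely the sum of the entries of the $y$-th column of $M(\zeta)$, i.e.\ $|\zeta(y)|=\mathbf{1}^{T}M(\zeta)e_{y}$, so $\sum_{y}\Upsilon_{y}|\zeta(y)|=\mathbf{1}^{T}M(\zeta)\Upsilon$. The proof of Proposition~\ref{symbolic} via Perron--Frobenius shows $\Upsilon$ is exactly the positive right eigenvector of $M(\zeta)$ normalized by $\sum_{y}\Upsilon_{y}=1$; hence $M(\zeta)\Upsilon=\lambda\Upsilon$ and $\mathbf{1}^{T}M(\zeta)\Upsilon=\lambda\sum_{y}\Upsilon_{y}=\lambda$, as required.

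There is really no hard step here: once Proposition~\ref{symbolic} is in hand, the whole argument is a one-line length recursion plus the eigenvector identification. The only thing to be careful about is making sure that the vector $\Upsilon$ produced in Proposition~\ref{symbolic} is the normalized Perron right eigenvector of $M(\zeta)$ rather than some other auxiliary limit; this is implicit in how Proposition~\ref{symbolic} is proved (through the convergence $M^{n}/\lambda^{n}\to vu^{T}$ for primitive $M$), but I would state it explicitly to avoid any ambiguity.
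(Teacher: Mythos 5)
Your argument is correct and self-contained. The paper itself gives no proof of Proposition~\ref{stretch}: it simply remarks that the statement follows from the proof of Proposition~\ref{symbolic} and cites Corollary~5.2 of Queffelec. Your length recursion $|\zeta^{n+1}(x)|=\sum_{y}n_{y}(\zeta^{n}(x))|\zeta(y)|$, together with the termwise application of Proposition~\ref{symbolic} (valid since $A$ is finite) and the identification $\mathbf{1}^{T}M(\zeta)\Upsilon=\lambda$, is precisely the argument being implicitly invoked. You are also right to flag that the one point that must be extracted from the \emph{proof} of Proposition~\ref{symbolic}, rather than its statement, is that $\Upsilon$ is the normalized Perron right eigenvector of $M(\zeta)$; the statement of Proposition~\ref{symbolic} alone only tells you $\Upsilon$ is a positive vector summing to $1$, and without $M(\zeta)\Upsilon=\lambda\Upsilon$ the final step would not go through. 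Given the Perron--Frobenius convergence $M^{n}/\lambda^{n}\to vu^{T}$ used to prove Proposition~\ref{symbolic}, one indeed gets $\Upsilon=v/(\mathbf{1}^{T}v)$, so your proof is complete.
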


Let $X_\zeta$ be the set of semi-infinite words such that for every $a_n\in X_\zeta$, every subword of $a_n$ appears as a subword of $\zeta^{k}(x)$ for some $k\ge0$ and for some $x\in A$. Let $T:A^{\mathbb{N}}\to A^{\mathbb{N}}$ be the shift map, which erases the first letter of each word. The following unique ergodicity result is an important ingredient of the Proof of Lemma \ref{erg1}. It is due to Michel \cite{Michel}, and a proof can be found in \cite[Proposition 5.6]{Q}.

\begin{thm} \label{ergodicprimitive} For a primitive substitution $\zeta$, the system $(X_{\zeta}, T)$ is uniquely ergodic. In other words, there is a unique $T$-invariant, Borel probability measure on $X_\zeta$.  
\end{thm}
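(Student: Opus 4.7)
The plan is to apply the Oxtoby unique ergodicity criterion: a continuous system $(X,T)$ on a compact metric space is uniquely ergodic if and only if, for every continuous $f\colon X\to\mathbb{R}$, the Birkhoff averages $\frac{1}{N}\sum_{k=0}^{N-1}f(T^{k}x)$ converge to a constant uniformly in $x$. Since $X_{\zeta}\subset A^{\mathbb{N}}$ is compact and continuous functions are uniformly approximated by finite linear combinations of indicator functions of the one-sided cylinder sets $Cyl_{\Omega}(w)$ attached to used words $w$, it suffices to show that for each such $w$ the frequency
\[
\mathrm{freq}(w;x)\;:=\;\lim_{N\to\infty}\frac{1}{N}\#\bigl\{0\le k<N:T^{k}x\in Cyl_{\Omega}(w)\bigr\}
\]
exists, equals a constant $f_{w}$ independent of $x$, and that the convergence is uniform in $x\in X_{\zeta}$.

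The main ingredient is Proposition \ref{symbolic}: for each used word $w$ and each letter $x_{i}\in A$, the ratios $n_{w}(\zeta^{n}(x_{i}))/|\zeta^{n}(x_{i})|$ converge to a positive limit $f_{w}$ independent of $x_{i}$. The task is to transfer this ``frequency inside substitution images'' into ``frequency along any orbit in $X_{\zeta}$.'' Given $x\in X_{\zeta}$ and a large $N$, I first choose an integer $n=n(N)$ so that the block length $L_{n}:=\min_{y\in A}|\zeta^{n}(y)|$ tends to infinity but $L_{n}/N\to0$; Proposition \ref{stretch} guarantees $|\zeta^{n}(y)|\asymp\lambda^{n}$ uniformly in $y\in A$, so such a choice is possible, for example with $L_{n}$ of order $N^{1/2}$.

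Next I would use the definition of $X_{\zeta}$: the prefix $x_{0}x_{1}\dots x_{N-1}$ is a subword of some $\zeta^{M}(y_{\ast})$ with $M\gg n$, and such a $\zeta^{M}(y_{\ast})$ is itself a concatenation of blocks of the form $\zeta^{n}(y)$. This produces a decomposition
\[
x_{0}\,x_{1}\,\dots\,x_{N-1}\;=\;s\cdot\zeta^{n}(y_{1})\zeta^{n}(y_{2})\cdots\zeta^{n}(y_{r-1})\cdot p,
\]
where $s$ is a proper suffix of some $\zeta^{n}(y_{0})$, $p$ is a proper prefix of some $\zeta^{n}(y_{r})$, and $y_{0},\dots,y_{r}\in A$. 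Counting occurrences of $w$: inside each complete block $\zeta^{n}(y_{j})$, Proposition \ref{symbolic} contributes $f_{w}\,|\zeta^{n}(y_{j})|+o(|\zeta^{n}(y_{j})|)$ occurrences, with the error uniform in $y_{j}\in A$ since $A$ is finite. Occurrences straddling a block boundary contribute at most $|w|-1$ per boundary, hence $O(r|w|)$ in total, and the partial blocks $s$ and $p$ together contribute at most $2L_{n}$ occurrences. Dividing by $N$ and using $|s|+|p|\le 2L_{n}=o(N)$ together with $rL_{n}/N\to1$, all error terms vanish, giving $\mathrm{freq}(w;x)=f_{w}$. Uniformity in $x$ is automatic, since every bound depends only on $N$, $n$, and $|w|$, not on $x$.

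The main obstacle is the desubstitution step, i.e.\ producing the block decomposition of an arbitrary prefix of $x\in X_{\zeta}$. In its sharpest form this is Moss\'e's recognizability theorem, which is rather delicate. Fortunately, for unique ergodicity one does not need uniqueness of the decomposition, only the existence of \emph{some} decomposition with controlled boundary error, and this follows from the defining property of $X_{\zeta}$ together with the uniform growth $|\zeta^{n}(y)|\asymp\lambda^{n}$ furnished by Propositions \ref{symbolic} and \ref{stretch}. The remaining work is bookkeeping: calibrating $n=n(N)$ so that both the partial-block length $L_{n}$ and the boundary count $(N/L_{n})\cdot|w|$ are $o(N)$, which is where the Perron--Frobenius expansion rate $\lambda>1$ is crucial.
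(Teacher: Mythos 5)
The paper does not actually give a proof of this theorem: it attributes the result to Michel \cite{Michel} and points to \cite[Proposition 5.6]{Q} for a proof, and then simply uses it as a black box in Proposition \ref{ergodicbfh}. So there is no internal proof to compare against; what I can do is assess your argument on its own. Your argument is correct and is, up to presentation, the standard Oxtoby--Birkhoff proof found in Queffelec's book: reduce unique ergodicity to uniform convergence of word frequencies along orbits, desubstitute a long prefix of $x\in X_\zeta$ into a concatenation of $\zeta^n$-blocks with two ragged ends, apply Proposition \ref{symbolic} inside each complete block (with the error uniform over the finite alphabet $A$), and absorb the boundary and ragged-end terms. The calibration $\lambda^{n(N)}\asymp N^{1/2}$ makes both the partial-block contribution $O(\lambda^{n})$ and the boundary contribution $O\bigl((N/\lambda^{n})\,|w|\bigr)$ into $o(N)$, while $n(N)\to\infty$ sends the per-block error $\epsilon_{n}$ to zero; since every bound depends only on $N$, $n$, $|w|$, and the alphabet, the convergence is uniform in $x$, which is exactly Oxtoby's criterion. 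Your remark that one does not need Moss\'e's recognizability here is accurate and worth keeping: all that is used is the \emph{existence} of a block decomposition of each finite prefix, which follows directly from the definition of $X_\zeta$ together with primitivity (one may always push the exponent $M$ above any desired $n$, since $\zeta^{k}(x)$ is a subword of $\zeta^{k+K}(y)$ for any $y$ once $M(\zeta)^{K}>0$). Two small points worth making explicit if you write this up: (i) $n_{w}$ counts occurrences of $w$ lying entirely inside a block, so the block sum \emph{undercounts}, and the discrepancy with the Birkhoff count $\#\{k<N:T^{k}x\in Cyl_{\Omega}(w)\}$ is at most $|w|-1$, which is harmless; (ii) one should note $T(X_\zeta)\subset X_\zeta$ and that $X_\zeta$ is a nonempty closed (hence compact) subset of $A^{\mathbb{N}}$, so that Krylov--Bogolyubov and the Oxtoby criterion apply.
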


\section{Train-Tracks Proof}\label{traintracks}

We briefly review the theory of train-tracks developed by Bestvina and Handel \cite{BH92}.

A graph $\Gamma$ is a one dimensional cell complex where 0-cells of $\Gamma$ are called \emph{vertices} and 1-cells of $\Gamma$ are called \emph{topological edges}. The set of vertices is denoted by $V\Gamma$ and the set of topological edges is denoted by $E\Gamma$. A topological edge with a choice of (positive) orientation is called an \emph{edge}, and the set of (positive) edges is denoted by $E^{+}\Gamma$. Given an edge $e$, the initial vertex of $e$ is denoted by $o(e)$ and the terminal vertex of $e$ is denoted by $t(e)$. The edge $e$ with the opposite orientation is denoted by $e^{-1}$ so that $o(e^{-1})=t(e)$ and $t(e^{-1})=o(e)$. A \emph{turn} in $\Gamma$ is an unordered pair $\{e_1,e_2\}$ of oriented edges such that $o(e_1)=o(e_2)$. A turn $\{e_1,e_2\}$ is called \emph{non-degenerate} if $e_1\neq e_2$ and \emph{degenerate} if $e_1=e_2$. A map $f:\Gamma:\to\Gamma$ is called a \emph{graph map}, if it maps vertices to vertices and edges to edge-paths. A graph map induces a map $Df:E\Gamma\to E\Gamma$ which sends $e$ to the first edge of $f(e)$. This induces a  well defined map $Tf$ on the space of turns in $\Gamma$ which is defined as follows: 
\[
Tf(e_1,e_2)=(Df(e_1),Df(e_2)).
\]
A turn $(e_1,e_2)$ is called \emph{legal} if the turns $(Tf)^{n}(e_1,e_2)$ are non-degenerate for all $n\ge0$. A turn is \emph{illegal} if it is not legal. An edge path $e_1e_2\dotsc e_k$ is \emph{legal} if all the turns $\{e_i^{-1},e_{i+1}\}$ are legal.   Let $E^{+}\Gamma=\{e_1,\dotsc,e_m\}$ be the set of positively oriented edges of $\Gamma$.  Given a graph map $f:\Gamma\to\Gamma$, the \emph{transition matrix} $M(f)$ for $f$ is the $m\times m$ matrix whose $ij^{th}$ entry is the number of occurences of $e_i$ and $e_i^{-1}$ in the edge-path $f(e_j)$.  A graph map is called \emph{tight} if $f(e)$ is reduced for each edge $e\in E\Gamma$. A tight map $f:\Gamma\to\Gamma$ is called \emph{irreducible} if there exist no proper invariant subgraphs.  
Equivalently, $f:\Gamma\to\Gamma$ is irreducible if and only if $M(f)$ is irreducible.  

Let $\alpha:R_N\to\Gamma$ be a marking and $\sigma:\Gamma\to R_N$ a homotopy inverse.
Every homotopy equivalence $f:\Gamma\to\Gamma$ \emph{determines} an outer automorphism $(\sigma\circ f\circ\alpha)_{*}$ of $F_{N}=\pi_{1}(R_{N},p)$. Let $\varphi\in Out(F_{N})$, the map $f:\Gamma\to\Gamma$ is called a \emph{topological representative} of $\varphi$ if $f$ determines $\varphi$ as above, $f$ is tight, and $f(e)$ is not a vertex for any $e\in E\Gamma$. 

Let $\Gamma$ be a finite connected graph without valence-one vertices. A graph map $f:\Gamma\to\Gamma$ is called a \emph{train track} map if for all $k\ge1$, the map $f^{k}$ is locally injective inside of every edge $e\in E\Gamma$. This condition means that, there is no backtracking in $f^{k}(e)$ for $e\in E\Gamma$. An important result of Bestvina-Handel \cite{BH92} states that, every irreducible outer automorphism $\varphi$ of $F_{N}$ has an irreducible train-track representative, i.e. a topological representative which is an irreducible train-track map. 

Let $\lambda$ be the Perron-Frobenius eigenvalue for the irreducible matrix $M(f)$. There exists a unique positive left eigenvector $\vec{v}$ such that $\vec{v}M(f)=\lambda\vec{v}$, and $\sum_{i=1}^{m}\vec{v}_i=1$. It is not hard to see that if we identify each edge $e_i$ with an interval of length $\vec{v}_i$, then the length of the path $f(e_i)$ is equal to $(\vec{v}M(f))_i=\lambda\vec{v}_i$. Therefore, the length of each edge, and hence the length of every legal edge-path, is expanded by $\lambda$ after applying $f$. This metric will be referred as the \emph{train-track metric}, and the length of an edge path $c$ with respect to the train-track metric will be denoted by $\ell_{t.t.}(c)$.  In what follows, we will denote the length of a path $c$ in $\Gamma$ with respect to the simplicial metric by $\ell_\Gamma(c)$. 

For a reduced edge path $\gamma$ in $\Gamma$, let $[f(\gamma)]$ denote the path which is reduced and homotopic to $f(\gamma)$ relative to end points. A nontrivial reduced edge-path $\gamma$ in $\Gamma$ is called a \emph{(periodic) Nielsen path} if $[f^{k}(\gamma)]=\gamma$ for some $k\ge1$. The smallest such $k$ is called the \emph{period} of $\gamma$. A path $\gamma$ is called \emph{pre-Nielsen} if its image under some positive iterate of $f$ is Nielsen. A Nielsen path is called \emph{indivisible} if it cannot be written as a concatenation of two Nielsen paths. A detailed discussion about Nielsen paths can be found in \cite{BH92}, here we will state two results that are relevant in our analysis. 

\begin{lem}[Bounded Cancellation Lemma]\cite{Coo}\label{BCL} Let $f:\Gamma\to\Gamma$ be a homotopy equivalence. There exists a constant $C_{f}$, depending only on $f$, such that for any reduced path $\rho=\rho_{1}\rho_{2}$ in $\Gamma$ one has
\[
\ell_\Gamma([f(\rho)])\ge\ell_\Gamma([f(\rho_1)])+\ell_\Gamma([f(\rho_2)])-2C_{f}.
\]
That is, at most $C_f$ terminal edges of $[f(\rho_1)]$ are cancelled with $C_f$ initial edges of $[f(\rho_2)]$ when we concatenate them to obtain $[f(\rho)]$. 
\end{lem}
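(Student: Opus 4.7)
The plan is to pass to the universal cover $\tilde\Gamma$, which is a simplicial tree, and exploit the fact that any lift of a homotopy equivalence of a finite graph is a quasi-isometric embedding. Once this is established, the median structure of the tree will produce the desired cancellation bound almost for free.

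First I would fix a lift $\tilde f:\tilde\Gamma\to\tilde\Gamma$ of $f$ and lift the reduced path $\rho=\rho_1\rho_2$ to a reduced edge-path $\tilde\rho=\tilde\rho_1\tilde\rho_2$, with endpoints $a,b,c$, where $b$ is the vertex at which $\tilde\rho_1$ and $\tilde\rho_2$ meet. Because $\rho$ is reduced, $b$ lies on the geodesic $[a,c]$. Set $A=\tilde f(a)$, $B=\tilde f(b)$, $C=\tilde f(c)$, and let $m$ be the unique median point of these three in the tree. Since $\tilde\Gamma$ is a tree, reducing an edge-path between two points gives the geodesic between them, so $[\tilde f(\tilde\rho_1)]=[A,B]$, $[\tilde f(\tilde\rho_2)]=[B,C]$ and $[\tilde f(\tilde\rho)]=[A,C]$. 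All three geodesics pass through $m$, and comparing the decompositions $[A,B]=[A,m]\cup[m,B]$, $[B,C]=[B,m]\cup[m,C]$, $[A,C]=[A,m]\cup[m,C]$ shows that concatenating $[\tilde f(\tilde\rho_1)]$ with $[\tilde f(\tilde\rho_2)]$ produces a backtrack of length $d(B,m)$ from each side. Thus the lemma reduces to proving a uniform bound $d(B,m)\le C_f$ depending only on $f$.

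To obtain this bound, I would show that $\tilde f$ is a quasi-isometric embedding of $\tilde\Gamma$. Pick a graph-map homotopy inverse $g:\Gamma\to\Gamma$ and choose lifts so that $\tilde g\tilde f$ is $F_N$-equivariantly homotopic to the identity. Since the homotopy $gf\simeq\mathrm{id}_\Gamma$ lives on the compact graph $\Gamma$, it yields a uniform constant $D\ge 0$ with $d(\tilde g\tilde f(x),x)\le D$ for every vertex $x\in\tilde\Gamma$. Combined with the Lipschitz bound $\ell_\Gamma([\tilde g(\eta)])\le L_g\,\ell_\Gamma(\eta)$, where $L_g=\max_{e\in E\Gamma}\ell_\Gamma(g(e))$, one obtains for every reduced edge-path $\eta$ in $\tilde\Gamma$ the estimate
\[
\tfrac{1}{L_g}\bigl(\ell_\Gamma(\eta)-D\bigr)\le \ell_\Gamma([\tilde f(\eta)])\le L_f\,\ell_\Gamma(\eta),
\]
where $L_f=\max_{e\in E\Gamma}\ell_\Gamma(f(e))$. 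Parametrized by arc length, the image edge-path $\tilde f\circ\tilde\rho$ is therefore a $(K,C)$-quasi-geodesic from $A$ to $C$ with constants depending only on $f$.

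The conclusion is now immediate from tree geometry: in any simplicial tree, a $(K,C)$-quasi-geodesic lies in a uniform Hausdorff neighborhood of the geodesic between its endpoints, where the bound depends only on $K$ and $C$ (this is the Morse lemma, which is essentially trivial in trees). Hence $B\in\tilde f(\tilde\rho)$ satisfies $d(B,[A,C])\le C_0$ for some $C_0=C_0(f)$, and since the nearest-point projection of $B$ onto $[A,C]$ is exactly the median $m$, we conclude $d(B,m)\le C_0$. Setting $C_f:=C_0$ finishes the proof. The main obstacle is the middle step: organizing the lifts $\tilde f,\tilde g$ so that the uniform bound $d(\tilde g\tilde f(x),x)\le D$ is available, and packaging it with the Lipschitz estimates into the quasi-isometric embedding statement. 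Once this is in hand, the tree argument is routine.
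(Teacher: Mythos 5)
The paper does not prove this lemma; it cites Cooper~\cite{Coo} and uses the statement as a black box. Your argument is therefore not being compared against an in-paper proof, but it is correct, and it is essentially the standard argument (going back to Cooper): lift to the universal cover, show that a lift $\tilde f$ is a quasi-isometric embedding by playing it against a lift of a homotopy inverse, and invoke Morse stability of quasi-geodesics, which in a tree forces $\tilde f(\tilde\rho)$ into a bounded neighborhood of the geodesic $[\tilde f(a),\tilde f(c)]$; the cancellation at the concatenation point is exactly $d(\tilde f(b),m)$ where $m$ is the median, and that median is the nearest-point projection of $\tilde f(b)$ onto $[\tilde f(a),\tilde f(c)]$, so it is controlled.

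One small slip: the triangle inequality
\[
d(x,y)\le d(x,\tilde g\tilde f(x))+d(\tilde g\tilde f(x),\tilde g\tilde f(y))+d(\tilde g\tilde f(y),y)
\]
gives $d(\tilde g\tilde f(x),\tilde g\tilde f(y))\ge d(x,y)-2D$, so the lower estimate should read $\ell_\Gamma([\tilde f(\eta)])\ge\frac{1}{L_g}\bigl(\ell_\Gamma(\eta)-2D\bigr)$ rather than with $D$; this is harmless since only the existence of some uniform constant is needed. You should also state explicitly that the quasi-geodesic estimate is being applied to arbitrary subsegments $\tilde\rho|_{[s,t]}$, not merely to $\tilde\rho$ itself, so that $\tilde f\circ\tilde\rho$ (parametrized by the arc length of the domain) satisfies the two-sided quasi-geodesic inequality at all pairs of times; this is immediate from your displayed estimate but worth making visible, since it is what the Morse argument actually consumes. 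With these cosmetic repairs the proof is complete and self-contained.
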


\begin{lem}\cite{BFH00}\label{Nielsen} Let $\varphi$ be an iwip. Then, for some $k\geq1$, the automorphism $\varphi^{k}$ admits a train track representative $f:\Gamma\to\Gamma$ with the following properties:
\begin{enumerate}
\item Every periodic Nielsen path has period 1.
\item There is at most one indivisible Nielsen path (INP) in $\Gamma$ for $f$. Moreover, if there is an INP, the illegal turn in the INP is the only illegal turn in the graph $\Gamma$. 
\end{enumerate}
\end{lem}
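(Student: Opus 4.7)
The plan is to bootstrap from Bestvina and Handel's existence theorem, which furnishes an irreducible train-track representative $f \colon \Gamma \to \Gamma$ for some power of $\varphi$. The idea is to first establish that the set of indivisible periodic Nielsen paths (INPs) of $f$ is finite, then pass to a further power to arrange period $1$, which already gives (1) since every periodic Nielsen path decomposes as a concatenation of INPs. Assertion (2) requires genuine input from full irreducibility to exclude the coexistence of two INPs or of an illegal turn not contained in an INP.

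\textbf{Finiteness of INPs.} Any INP $\rho$ decomposes uniquely as a concatenation $\rho = u \cdot v$ of two maximal legal subpaths $u, v$ that share the sole illegal turn of $\rho$ at their common endpoint. If $[f^m(\rho)] = \rho$, then since $f$ maps legal paths to legal paths, both $f^m(u)$ and $f^m(v)$ are legal, of train-track lengths $\lambda^m \ell_{t.t.}(u)$ and $\lambda^m \ell_{t.t.}(v)$ respectively, where $\lambda > 1$ is the Perron-Frobenius eigenvalue of $M(f)$. For $[f^m(\rho)] = \rho$ to hold, the surplus length $(\lambda^m - 1)(\ell_{t.t.}(u) + \ell_{t.t.}(v))$ must be absorbed entirely by tightening at the single illegal turn present in $f^m(\rho)$. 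The Bounded Cancellation Lemma \ref{BCL} caps this cancellation by $2 C_{f^m}$, which in turn bounds $\ell_{t.t.}(u) + \ell_{t.t.}(v)$ uniformly in $\rho$. Since only finitely many legal edge-paths of bounded length issue from any vertex, the set of INPs is finite, and they fall into finitely many $f$-orbits of finite period. Letting $k$ be the least common multiple of these periods makes each periodic Nielsen path $f^k$-fixed, yielding (1).

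\textbf{Uniqueness of INPs and of the illegal turn.} For (2), after replacing $f$ by $f^k$, I would argue by contradiction, considering either (i) two distinct INPs $\rho_1, \rho_2$, or (ii) one INP $\rho$ together with an illegal turn not incident to $\rho$. In case (i), both $\rho_i$ are $f$-fixed, so every closed concatenation of the form (INP)(legal path)(INP)\ldots determines a conjugacy class invariant under $\varphi^k$; the associated subgroup of $F_N$ is $\varphi^k$-invariant up to conjugacy. Full irreducibility prevents it from defining a proper free factor system, yet the gluing constraints imposed by the two independent illegal-turn cancellations produce, via a Stallings-fold analysis of $f$, exactly such a proper invariant system, a contradiction. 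In case (ii), the extra illegal turn is eventually periodic under the turn map $Tf$; tracking maximal legal extensions through its orbit and invoking the Bounded Cancellation Lemma as in the finiteness argument produces a new INP disjoint from $\rho$, reducing to case (i).

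\textbf{Main obstacle.} The principal difficulty lies squarely in (2): while the finiteness in (1) is essentially formal from bounded cancellation combined with the Perron-Frobenius expansion, extracting a genuinely $\varphi^k$-invariant proper free factor system from the combinatorics of two INPs (or an INP together with a stray illegal turn) is delicate and is exactly where the BFH00 argument does most of its work, via the machinery of relative train-tracks and the careful fold-by-fold analysis of $f$. A concise standalone proof is correspondingly hard to write down; the route above is how I would organize the argument if I had to reproduce it.
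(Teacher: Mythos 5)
The paper does not prove this lemma; it is stated as a citation to \cite{BFH00}, so there is no internal argument of the paper to compare your reconstruction against. Judged on its own terms, your argument for part (1) is morally correct but glossed at one point: the bounded cancellation constant $C_{f^m}$ grows with the period $m$, so the inequality $(\lambda^m-1)\bigl(\ell_{t.t.}(u)+\ell_{t.t.}(v)\bigr)\le 2C_{f^m}$ does not by itself give a length bound that is uniform over all periodic Nielsen paths. One needs the estimate $C_{f^m}\le C_f\,(\lambda^m-1)/(\lambda-1)$, obtained by iterating one-step bounded cancellation, to deduce $\ell_{t.t.}(u)+\ell_{t.t.}(v)\le 2C_f/(\lambda-1)$ independently of $m$; and since $C_f$ in Lemma~\ref{BCL} is measured in the simplicial metric while the $\lambda$-expansion is in the train-track metric, the bi-Lipschitz comparison of Remark~\ref{BiLipRemark} should be invoked. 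These are repairable, and with them part (1) goes through.

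Part (2), however, contains the essential gap, which you yourself acknowledge. Your sketch asserts that two fixed INPs---or one INP together with an illegal turn off that INP---would force a $\varphi^k$-invariant proper free factor system, but the appeal to ``gluing constraints'' and ``a Stallings-fold analysis'' does not actually produce such a system, and case (ii) promises to manufacture a second INP from a stray illegal turn without carrying out the construction. This is also not how the argument in \cite{BFH00} is organized: there one passes to a \emph{stable} train track representative---roughly, one that minimizes the collection of INPs under a folding/optimization process---and for an irreducible stable representative one proves, by a combinatorial analysis of gates and folds together with an index (Euler-characteristic) count, that there is at most one INP and that its illegal turn is the unique illegal turn in the graph. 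Reproducing that requires introducing the stability machinery or some equivalent (e.g.\ Whitehead-graph and index arguments); as written, the contradiction in your case (i) is asserted rather than derived.
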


\begin{conv}\label{conv} Recall that an edge $e\in E\Gamma$ is \emph{periodic} if $f^{n}(e)$ starts with $e$ for some $n\ge1$.  Up to passing to a power of $\varphi$ and hence $f$, we will assume that for every periodic edge $e\in E\Gamma$, $f(e)$ starts with $e$, and $M(f)>0$. See, for example, \cite{Ka5}. 
So we will work with a power of $\varphi$ which satisfies both Lemma \ref{Nielsen} and above requirements. In Proposition \ref{power} we will deduce the dynamical properties for $\varphi$ from those of $\varphi^{l}$. For convenience we will still denote our map by $\varphi$ in what follows. 
\end{conv}
\begin{rem}\label{BiLipRemark} It is well-known that the train-track metric $\ell_{t.t.}$ and the simplicial metric $\ell_{\Gamma}$ on $\Gamma$ are bi-Lipschitz equivalent. This means that there is a constant $K>1$ such that for any reduced path $v$ in $\Gamma$,  
\[
\frac{1}{K}\ell_{\Gamma}(v)\le\ell_{t.t.}(v)\le K\ell_{\Gamma}(v).
\]
By the discussion above, after appying $f$ the length of every legal path is expanded by $\lambda$ which is the Perron-Frobenius eigenvalue for $M(f)$. Therefore, for any legal path $v$ in $\Gamma$ we have
\[
K^2\lambda^{n}\ell_{\Gamma}(v)\ge K\lambda^{n}\ell_{t.t.}(v)=K\ell_{t.t.}(f^{n}(v))\ge\ell_{\Gamma}(f^{n}(v))\ge\frac{1}{K}\ell_{t.t.}(f^{n}(v))=\frac{1}{K}\lambda^{n}\ell_{t.t.}(v)\ge\frac{1}{K^2}\lambda^{n}\ell_{\Gamma}(v).
\]
Hence up to passing to a further power of $\varphi$ and $f$ we will assume that, after applying $f$, the length of every legal path is expanded at least by a factor of $\lambda'>1$ with respect to the simplicial metric on $\Gamma$. 

\end{rem}

\begin{notation} Let $v,w$ be reduced edge paths in $\Gamma$. Consider $v,w$ as a string of letters such that each letter is labeled by an edge in $\Gamma$. Then, \emph{the number of occurrences} of $v$ in $w$, denoted by $\left(v,w\right)$ is the number of letters in $w$ from which one can read of $v$ in forward direction. Define $\left<v,w\right>=(v,w)+(v^{-1},w)$. We will denote the \emph{weight} of $\nu$ with respect to the marking $\alpha:F_N\to\Gamma$ by $\|\nu\|_\Gamma$, which is defined as
$\|\nu\|_\Gamma=\dfrac{1}{2}\sum_{e\in E\Gamma}\left<e,\nu\right>_\Gamma$.
\end{notation}
\begin{lem}\label{edgelimit} For any reduced edge-path $v$ in $\Gamma$, there exists $a_{v}\ge0$ such that
\[
\lim_{n\to\infty}\dfrac{\left<v,f^{n}(e)\right>}{\ell_\Gamma(f^{n}(e))}=a_{v}
\] for all $e\in\ E\Gamma$.

\end{lem}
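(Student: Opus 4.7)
The plan is to reduce the statement to Proposition \ref{symbolic} by reinterpreting the train-track map $f$ as a primitive substitution. Take $A := E\Gamma$ as a finite alphabet (so that the two orientations of each topological edge are distinct letters) and define $\zeta : A \to A^{*}$ by $\zeta(e) := f(e)$, read off as an edge-word.

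First I would verify that $\zeta$ fits the framework of Section \ref{shifts}. By Convention \ref{conv}, $M(f) > 0$, so $M(\zeta) = M(f)$ is positive, hence $\zeta$ is primitive. The Perron-Frobenius eigenvalue $\lambda > 1$ (from Proposition \ref{stretch}) then forces $|\zeta^{n}(e)| \to \infty$ for every $e \in A$, and Convention \ref{conv} supplies a periodic edge $x$ with $\zeta(x) = x \cdots$, matching the fixed-prefix hypothesis.

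The main step — and the only point that actually uses the train-track hypothesis — is to establish
\[
\zeta^{n}(e) \;=\; f^{n}(e) \qquad \text{as words in } A,
\]
for all $n \ge 1$ and all $e \in E\Gamma$. This follows inductively: because $f$ is a train-track map, $f^{n}(e)$ is already a reduced edge-path for every $n$, so when one computes $f^{n+1}(e) = [f(f^{n}(e))]$ by substituting $f$ into each successive edge of $f^{n}(e)$, no cancellation can occur at any junction — otherwise backtracking would appear in $f^{n+1}(e)$, contradicting the train-track property. Consequently the formal iterated substitution agrees with the freely reduced edge-path, and in particular $|\zeta^{n}(e)| = \ell_{\Gamma}(f^{n}(e))$.

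With that identification in hand the lemma is immediate. Given any reduced edge-path $v$ in $\Gamma$, the notation preceding the lemma gives
\[
\langle v, f^{n}(e) \rangle \;=\; n_{v}(\zeta^{n}(e)) + n_{v^{-1}}(\zeta^{n}(e)).
\]
If neither $v$ nor $v^{-1}$ is a used word for $\zeta$, each term vanishes for all $n$ and one sets $a_{v} = 0$. Otherwise, applying Proposition \ref{symbolic} to the used word (and trivially to the other if it is not used) produces nonnegative limits $b_{v}, b_{v^{-1}}$, independent of $e$, for the ratios $n_{v}(\zeta^{n}(e))/|\zeta^{n}(e)|$ and $n_{v^{-1}}(\zeta^{n}(e))/|\zeta^{n}(e)|$. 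Setting $a_{v} := b_{v} + b_{v^{-1}} \ge 0$ yields the conclusion, with $a_{v}$ independent of the edge $e$. The only delicate bookkeeping is the identity $\zeta^{n}(e) = f^{n}(e)$; after that, everything is a direct quotation of symbolic dynamics.
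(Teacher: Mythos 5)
Your reduction to Proposition \ref{symbolic} is the right idea, and your identification $\zeta^{n}(e) = f^{n}(e)$ (justified correctly via the train-track property) is exactly what makes the symbolic machinery applicable. However, there is a genuine gap in the verification that the substitution $\zeta : E\Gamma \to (E\Gamma)^{*}$, $\zeta(e) := f(e)$, is primitive. You assert ``$M(\zeta) = M(f)$ is positive'' — but this is not true. The transition matrix $M(f)$ of the train-track map, as defined in Section \ref{traintracks}, is an $m \times m$ matrix ($m = |E^{+}\Gamma|$) whose $(i,j)$-entry counts occurrences of $e_i$ \emph{and} $e_i^{-1}$ in $f(e_j)$, whereas the transition matrix $M(\zeta)$ for your substitution on the full oriented alphabet $A = E\Gamma$ is $2m \times 2m$ and counts only occurrences of $e_i$ with its given orientation. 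Positivity of $M(f)$ does not imply positivity, or even irreducibility, of $M(\zeta)$.

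Concretely, suppose $\varphi$ is represented by a positive automorphism, so that $f$ sends each positively oriented edge to a word in positively oriented edges. Then $\zeta$ leaves the subalphabet $E^{+}\Gamma$ invariant (and likewise its complement $E^{-}\Gamma$), so $M(\zeta)$ is block diagonal and $\zeta$ is reducible — Proposition \ref{symbolic} does not apply directly. This is precisely why the paper's proof distinguishes two cases via the sign behavior of the eigenray $\rho = \lim_{n} f^{n}(e_{0})$: if each edge occurs in $\rho$ with only one sign (``Type 1''), $\zeta$ splits into two separate primitive substitutions $f_{+}$ on $E_{+}$ and $f_{-}$ on $E_{-}$, and one must additionally invoke the symmetry $(v, f^{n}(e)) = (v^{-1}, f^{n}(e^{-1}))$ to combine the two resulting limits; only in the other case (``Type 2,'' both signs appear) is the substitution on the full alphabet $E\Gamma$ irreducible, and even there primitivity is obtained only after the power-passing allowed by Convention \ref{conv}. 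To repair your argument, you need to establish and treat this dichotomy rather than appeal to a single primitive substitution on $E\Gamma$.
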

\begin{proof}
Let $\rho=\lim_{n\rightarrow\infty}f^{n}(e_{0})$, where $e_0$ is a periodic edge. For an edge $e\in E\Gamma$ we have two possibilities:\\
$Type\ 1$ : Either only $e$ occurs or only $e^{-1}$ occurs in $\rho$.\\
$Type\ 2$ : Both $e$ and $e^{-1}$ occur in $\rho$.
\begin{claim*} There are two disjoint cases: 
\begin{enumerate}
\item Every edge $e\in E\Gamma$ is of Type 1.
\item Every edge $e\in E\Gamma$ is of Type 2.
\end{enumerate}
\end{claim*}
Let us assume that for an edge $e$ both $e$ and $e^{-1}$ occur in $\rho$. Now look at $f(e)$.  Since $M(f)>0$, for an arbitrary edge $e_{i}$, it means that either $e_{i}$ occurs in $f(e)$ or $e_{i}^{-1}$ or possibly both of them occur in $f(e)$. If both of them occur in $f(e)$ they occur in $\rho$ as well and we are done, otherwise assume that only one of them occurs in $f(e)$, say $e_{i}$. In that case $e_{i}^{-1}$ occurs in $f(e^{-1})$ so that both $e_{i}$ and $e_{i}^{-1}$ occur in $\rho$. For the second case, assume that for an edge $e$ either only $e$ occurs or only $e^{-1}$ occurs on $\rho$. We claim that this is the case for every other edge. Assume otherwise, and say that for some edge $e_{j}$ both $e_{j}$ and $e_{j}^{-1}$ occur in $\rho$, but from first part that would imply that both $e$ and $e^{-1}$ occur in $\rho$ which is a contradiction. We now continue with the proof of the Lemma.
\begin{case}[Every edge $e\in E\Gamma$ is of Type 1]
Split $E\Gamma=E_{+}\cup E_{-}$, where 
\[
E_{+}=\{e\ |e\ occurs\ in\ \rho\ only\ with\ positive\ sign\}\] and\[ E_{-}=\{e\ |e\ occurs\ in\ \rho\ only\ with\ negative\ sign\}.
\]
So $f$ splits into two primitive substitutions:
$f_{+}:A_0\rightarrow A_0^{*}$ where $A_0=E_{+}$ and $f_{-}:A_1\rightarrow A_1^{*}$ where $A_1=E_{-}$. Second part of the Proposition \ref{symbolic}, together with the observation that
$(v,f^{n}(e))=(v^{-1},f^{n}(e^{-1}))$ gives the required convergence. 
\end{case}
\begin{case}[Every edge $e\in E\Gamma$ is of Type 2] In this case we can think of $e^{-1}$ as a distinct edge, then $f$ becomes a primitive substitution on the set $A=E\Gamma$ and the result follows from Proposition \ref{symbolic}. 
\end{case}
\noindent This completes the proof of Lemma \ref{edgelimit}.
\end{proof}
\begin{lem} \label{stable} The set of numbers $\{a_{v}\}_{v\in\mathcal{P}\Gamma}$ defines a unique geodesic current which will be denoted by $\mu_{+}=\mu_{+,\Gamma}(\varphi)$ and called \emph{stable current} of $\varphi$. Similarly, define $\mu_{-}=\mu_{+}(\varphi^{-1})$, call it \emph{unstable current} of $\varphi$.   
\end{lem}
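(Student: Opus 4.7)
The plan is to verify that the non-negative numbers $\{a_v\}_{v\in\mathcal{P}\Gamma}$ produced by Lemma \ref{edgelimit} form the value-family $(\langle v,\mu\rangle_\alpha)_{v\in\mathcal{P}\Gamma}$ of a (necessarily unique) geodesic current $\mu=\mu_+\in Curr(F_N)$. By the affine homeomorphism $\tau$ of Subsection \ref{shifts}, equivalently by Kapovich's characterization of currents via their values on cylinder sets, it suffices to check that the numbers $a_v$ satisfy: (i) non-negativity, (ii) flip-symmetry $a_v=a_{v^{-1}}$, and (iii) the Kirchhoff-type consistency relations
\[
a_v \;=\; \sum_{e\in E\Gamma,\, ve\ \text{reduced}} a_{ve} \;=\; \sum_{e\in E\Gamma,\, ev\ \text{reduced}} a_{ev}
\]
for every $v\in\mathcal{P}\Gamma$.

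Non-negativity is immediate, as each $a_v$ is a limit of ratios of non-negative integers. Flip-symmetry follows from the defining identity $\langle v,f^n(e)\rangle=(v,f^n(e))+(v^{-1},f^n(e))$, which is manifestly invariant under $v\leftrightarrow v^{-1}$. For the Kirchhoff relations I would argue combinatorially on the reduced edge-path $f^n(e)$, using that $f$ is a train-track map so no cancellation occurs. Each occurrence of $v$ in $f^n(e)$ not ending at the terminal edge extends uniquely to an occurrence of $ve'$ for some $e'\in E\Gamma$, and this $ve'$ is automatically reduced; only the occurrence (if any) ending at the very last edge of $f^n(e)$ fails to extend. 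Thus $\sum_{e':\,ve'\ \text{reduced}}(ve',f^n(e))$ differs from $(v,f^n(e))$ by at most $1$, with the analogous estimate on the opposite end handling the $v^{-1}$ contribution. Consequently
\[
\left|\,\langle v,f^n(e)\rangle \;-\; \sum_{e':\,ve'\ \text{reduced}} \langle ve',f^n(e)\rangle\,\right| \;\le\; 2.
\]
Since $f$ is irreducible with Perron--Frobenius eigenvalue $\lambda>1$, Remark \ref{BiLipRemark} together with Proposition \ref{stretch} gives $\ell_\Gamma(f^n(e))\to\infty$ (growing like $\lambda^n$). Dividing through by $\ell_\Gamma(f^n(e))$ and applying Lemma \ref{edgelimit} produces the right-extension identity in the limit; the left-extension identity is proved identically at the initial end of $f^n(e)$.

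With (i)--(iii) verified, the existence of $\mu_+$ follows from the $\tau$-correspondence and its uniqueness from the fact (recalled earlier from \cite{Ka2}) that a current is determined by its values on cylinder sets. The unstable current is then defined simply as $\mu_-:=\mu_+(\varphi^{-1})$, applying the entire construction to $\varphi^{-1}$ in place of $\varphi$; this is legitimate because $\varphi^{-1}$ is again a hyperbolic iwip and hence admits a train-track representative to which Convention \ref{conv} and Lemma \ref{edgelimit} apply. The main delicate point is the bookkeeping in (iii): ensuring that the $O(1)$ end-effects truly are uniform in $n$ and that the denominators $\ell_\Gamma(f^n(e))$ grow without bound. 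Both rely essentially on the irreducibility of $M(f)$ and the train-track property, which are built into the setup of this section.
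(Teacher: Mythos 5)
Your proof is correct and follows essentially the same route as the paper: both reduce the claim to Kapovich's switch conditions (non-negativity, flip-symmetry, and the Kirchhoff-type consistency), and both verify the Kirchhoff relation by observing that the only occurrences of $v$ that fail to extend to some $ve'$ (or $e'v$) are those sitting at the two ends of the reduced path $f^n(e)$, so the discrepancy is $O(1)$ and vanishes after dividing by $\ell_\Gamma(f^n(e))\to\infty$. Your end-effect bound of $2$ is actually slightly sharper than the paper's $2|q_+(v)|$, but the argument is otherwise the same.
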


\begin{proof} Let us define
\[ 
q_{+}(v)=\{e\in E\Gamma | ve\in\mathcal{P}\Gamma\},\ q_{-}(v)=\{e\in E\Gamma | ev\in\mathcal{P}\Gamma\}.
\]  
We will show that above set of numbers satisfies the switch conditions as in \cite{Ka2}.\\
$(1)$ It is clear that for any $v\in\mathcal{P}\Gamma$ we have $0\le a_{v}<1<\infty$.\\ 
$(2)$ It is also clear from the definition that $\{a_{v}\}=\{a_{v^{-1}}\}$. \\
$(3)$ We need to show that 
\[
\sum_{e\in q_{+}(v)} \lim_{n\to\infty}\dfrac{\left<ve,f^{n}(e_{0})\right>}{\ell_\Gamma(f^{n}(e_{0}))}=
\lim_{n\to\infty}\dfrac{\left<v,f^{n}(e_{0})\right>}{\ell_\Gamma(f^{n}(e_{0}))}=
\sum_{e\in q_{-}(v)} \lim_{n\to\infty}\dfrac{\left<ev,f^{n}(e_{0})\right>}{\ell_\Gamma(f^{n}(e_{0}))}
\]
For the first equality, under a finite iterate of $f$, the only undercount of occurrences of $ve$ in $f^{n}(e_{0})$ can happen if $v$ is the last subsegment of $f^{n}(e_{0})$ or $v^{-1}$ is the first subsegment of $f^{n}(e_{0})$. Hence
\[
\left|\dfrac{\left<v,f^{n}(e_{0})\right>}{\ell_\Gamma(f^{n}(e_{0}))}-\sum_{e\in q_{+}(v)} \dfrac{\left<ve,f^{n}(e_{0})\right>}{\ell_\Gamma(f^{n}(e_{0}))}\right|\le\dfrac{2|q_{+}(v)|}{\ell_\Gamma(f^{n}(e_{0}))}\to0\]
as $n\to\infty$. Second equality can be shown similarly.
\end{proof}

\begin{rem} \label{unstable} By construction, for all $m\ge1$ we have $[\mu_{+}(\varphi^{m})]=[\mu_{+}(\varphi)]$. We also note that Corollary \ref{stretch} implies that $\varphi\mu_{+}=\lambda\mu_{+}$. Indeed, by definition of $a_v$ and Corollary \ref{stretch} for any edge path $v\in\mathcal{P}\Gamma$ and for any edge $e\in E\Gamma$ we have  $\lim_{n\to\infty}\frac{\left<v,f^{n+1}(e)\right>}{\ell_{\Gamma}(f^{n}(e))}=\lambda a_{v}$. On the other hand for a legal circuit $w$ in $\Gamma$, we have $\lim_{n\to\infty}\frac{\left<v,f^{n}(w)\right>}{\ell_{\Gamma}(f^{n}(w))}=\lim_{n\to\infty}\frac{\left<v,f^{n}(e)\right>}{\ell_{\Gamma}(f^{n}(e))}$ which in turn implies that $\lim_{n\to\infty}\frac{\eta_{f^{n}(w)}}{\ell_{\Gamma}(f^{n}(w))}=\mu_{+}$. From these two observations we can see that 
\[
\varphi\mu_{+}=\lim_{n\to\infty}\frac{\varphi\eta_{f^{n}(w)}}{\ell_{\Gamma}(f^{n}(w))}=\lim_{n\to\infty}\frac{\eta_{f^{n+1}(w)}}{\ell_{\Gamma}(f^{n}(w))}=\lambda\mu_{+}.
\]

\end{rem}

Before proceeding with the proof of the main theorem of this section we will go over a modified version (due to Bestvina-Feighn \cite{BF10}) of  some necessary language introduced by R. Martin in his thesis \cite{Martin}. 

\begin{defn} Let $[w]$ be a conjugacy class in $F_{N}$. Represent $[w]$ as a reduced circuit $c$ in $\Gamma$. Let $C_{f}$ be the bounded cancellation constant for $f:\Gamma\rightarrow\Gamma$ where $\Gamma$ is equipped with the simplicial metric. The edges in $c$ that are at least $C:=\dfrac{C_{f}}{\lambda'-1}$ away from an illegal turn are called \emph{``good"} edges where the distance is measured on $c$. The ratio of number of \emph{good} edges in $c$ and length of $c$ is called \emph{``goodness''} of $[w]$ and is denoted by $\gamma([w])\in[0,1]$. An edge is called \emph{``bad''} if it is not good. An edge-path $\gamma$ is called \emph{``bad''} if every edge in $\gamma$ is bad. A \emph{``legal end''} of a maximal bad segment $b$ is a legal subpath $``a"$ of $b$ such that $b=a\gamma$ or $b=\gamma a$. Note that length of a legal end $a$, $\ell(a)\ge C$ otherwise $b$ wouldn't be maximal.  
\end{defn}

\begin{lem}\label{biggood} Let $\delta>0$ and $\epsilon>0$ be given, then there exists an integer $M'=M'(\delta,\epsilon)\ge 0$ such that for any $[w]\in F_{N}$ with $\gamma([w])\geq\delta$ we have $\gamma(\varphi^{m}([w]))\geq1-\epsilon$ for all $m\ge M'$. 
\end{lem}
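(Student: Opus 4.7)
To prove this, I would represent $[w]$ by a reduced circuit $c$ in $\Gamma$ and decompose it cyclically into its maximal legal subpaths $c = \rho_1\rho_2\cdots\rho_k$, where consecutive $\rho_i$'s meet at the $k$ illegal turns of $c$. Set $L_i = \ell_\Gamma(\rho_i)$, so that $\ell_\Gamma(c) = \sum_{i=1}^k L_i$ and $k \leq \ell_\Gamma(c)$ (each $\rho_i$ has at least one edge). Because $f$ is a train-track map each $f^m(\rho_i)$ is again reduced and legal, with simplicial length at least $(\lambda')^m L_i$ by Remark \ref{BiLipRemark}, so the only free reduction in $[f^m(c)] = [f^m(\rho_1) \cdots f^m(\rho_k)]$ occurs at the $k$ illegal-turn junctions.

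The main technical step I would establish is a \emph{uniform cancellation bound}: there exists a constant $D = D(f) > 0$, depending only on $f$, such that for every $m \geq 1$ and every illegal-turn position in $c$, the free reduction between $f^m(\rho_i)$ and $f^m(\rho_{i+1})$ cancels at most $D$ edges from each side. I expect this to be the main obstacle. The proof should combine Lemma \ref{BCL} with the structural Lemma \ref{Nielsen}: the single illegal-turn type in $\Gamma$ arises, if at all, from the unique indivisible Nielsen path $\eta$ fixed by $f$, and any cancellation in $[f^m(\rho_i)f^m(\rho_{i+1})]$ must match a sub-path of $\eta$ near its illegal turn. Once this INP pattern has been absorbed the two legal sides diverge and no further cancellation is possible, which bounds the cancellation by a constant depending on $\ell_\Gamma(\eta)$.

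Granted this uniform bound, a direct book-keeping argument finishes the proof. First, combining the $(\lambda')^m$-stretching of each legal subpath with the cancellation estimate yields
\[
\ell_\Gamma([f^m(c)]) \;\geq\; (\lambda')^m \ell_\Gamma(c) - 2Dk.
\]
Next, since $Tf$ sends legal turns to legal turns, every interior turn of each $[f^m(\rho_i)]$ remains legal, so $[f^m(c)]$ has at most $k$ illegal turns and thus at most $2Ck$ bad edges. Combining these observations and using $k \leq \ell_\Gamma(c)$ gives
\[
\gamma(\varphi^m([w])) \;\geq\; 1 - \frac{2Ck}{(\lambda')^m \ell_\Gamma(c) - 2Dk} \;\geq\; 1 - \frac{2C}{(\lambda')^m - 2D}.
\]
Choosing $M'$ large enough that $(\lambda')^{M'} > 2D + 2C/\epsilon$ makes the right-hand side at least $1 - \epsilon$ for all $m \geq M'$. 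The stated $\delta$-dependence of $M'$ would be used, if at all, to absorb the first few iterates during which very short legal subpaths have not yet grown past the cancellation scale $D$; the asymptotic estimate above is already $\delta$-independent.
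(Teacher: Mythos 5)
Your proposed ``uniform cancellation bound'' — that there is a constant $D=D(f)$ so that for \emph{every} $m$ the free reduction between $f^m(\rho_i)$ and $f^m(\rho_{i+1})$ removes at most $D$ edges from each side — is false, and this is the crux of the gap. Two examples: (a) If the illegal turn at the junction is the tip of the INP $\eta=\eta_1\bar\eta_2$, then $[f^m(\eta)]=\eta$ stays bounded while $f^m(\eta_1)$ has length $\gtrsim(\lambda')^m\ell(\eta_1)$, so the cancelled portion of $f^m(\eta_1)$ grows without bound. (b) Even when the illegal turn \emph{resolves}, say $[f(\rho_1\rho_2)]$ is already legal with $f(\rho_1)=\alpha_1 u_1$ and $f(\rho_2)=u_1^{-1}\beta_1$, one has $f^m(\rho_1)=f^{m-1}(\alpha_1)f^{m-1}(u_1)$ with surviving part $f^{m-1}(\alpha_1)$, so the cancelled part $f^{m-1}(u_1)$ has length $\geq(\lambda')^{m-1}\ell(u_1)\to\infty$. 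The Bounded Cancellation Lemma gives a bound of $C_f$ \emph{per single application} of $f$, not a total bound across all iterates, and neither Lemma~\ref{BCL} nor the INP structure from Lemma~\ref{Nielsen} delivers the total bound you need. Consequently your inequality $\ell_\Gamma([f^m(c)])\geq(\lambda')^m\ell_\Gamma(c)-2Dk$ is too strong: it would imply that \emph{every} circuit grows at rate $(\lambda')^m$ with uniform constants, and your final bound $\gamma(\varphi^m([w]))\geq 1-2C/((\lambda')^m-2D)$ is $\delta$-independent, which you yourself flag as suspicious — and rightly so, since the hypothesis $\gamma([w])\geq\delta$ cannot be dispensed with.

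The paper's argument sidesteps total cancellation entirely. It works not with the maximal legal subpaths but with the ``good'' edges (distance $\geq C=C_f/(\lambda'-1)$ from an illegal turn) and the ``legal ends'' of bad segments (legal subpaths of length $\geq C$ flanking the illegal turns). The single-step BCL shows each legal end satisfies $\ell(f(a_i))-C_f\geq\ell(a_i)$, so by induction the legal ends never drop below length $C$ and absorb all cancellation at every step: the good part never gets eaten into, so its length in $[f^m(c)]$ is $\geq(\lambda')^m\gamma([w])\ell_\Gamma(c)$. Meanwhile the number of illegal turns cannot increase, so the number of bad edges in $[f^m(c)]$ stays $\leq 2C(1-\delta)\ell_\Gamma(c)$. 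This gives $\gamma(\varphi^m([w]))\geq\frac{(\lambda')^m\gamma([w])}{(\lambda')^m\gamma([w])+2C(1-\delta)}$, which depends on $\delta$ exactly as it should. If you wish to rescue your outline, you should replace the uniform-$D$ claim with this per-step absorption argument, or equivalently bound only the \emph{bad} edges (via the illegal-turn count) and lower-bound the length of $[f^m(c)]$ by the surviving good part rather than by $(\lambda')^m\ell_\Gamma(c)-2Dk$.
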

\begin{proof} First observe that by Lemma \ref{BCL}, the legal ends $a_{i}$ of the bad segments will never get shortened by applying a power of $f$ since 
\[
l(f(a_{i}))-C_{f}\ge\lambda' l(a_{i})-C_{f}\ge\lambda' l(a_{i})-(\lambda'-1)l(a_{i})=l(a_{i}).
\]
This means that at each iteration length of the good segments will increase at least by a factor of $\lambda'$. In a reduced circuit $c$ representing $[w]\in F_{N}$ with \emph{goodness} $\gamma([w])\ge\delta$ the number of illegal turns is bounded by $l(c)(1-\delta)$. So the number of \emph{bad} edges in $f^{k}(c)$ is bounded by $l(c)(1-\delta)2C$ for any $k\ge0$. Therefore,
\begin{align*}
\gamma(\varphi^{m}([w])) & \ge\dfrac{(\lambda')^{m}\gamma([w])l(c)}{(\lambda')^{m}\gamma([w])l(c)+l(c)(1-\delta)2C} \\
& =\dfrac{(\lambda')^{m}\gamma([w])}{(\lambda')^{m}\gamma([w])+(1-\delta)2C} \\
& \ge 1-\epsilon
\end{align*} for all $m\ge M'$ for sufficiently big $M'$.  
\end{proof}

\begin{lem} \label{convgood} Given $\delta>0$ and a neighborhood $U$ of the stable current $[\mu_{+}]\in\mathbb{P}Curr(F_N)$, there is an integer $M=M(\delta,U)$ such that for all $[w]\in F_N$ with $\gamma([w])\ge\delta$, we have $\varphi^{n}([\eta_w])\in U$ for all $n\ge M$.
\end{lem}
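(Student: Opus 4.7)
The plan is to verify Kapovich's convergence criterion (Lemma 2.1) for $[\eta_{\varphi^n[w]}]\to[\mu_+]$ by showing that, for each of finitely many test paths $v$ specifying the neighborhood $U$, the ratio $\left<v,[f^n(c)]\right>/\ell_\Gamma([f^n(c)])$ converges to $a_v=\left<v,\mu_+\right>_\Gamma$ uniformly over reduced circuits $c$ representing $[w]$ with $\gamma([w])\geq\delta$. Here $w_\Gamma(\eta_{f^n(c)})=\ell_\Gamma(f^n(c))$, while $w_\Gamma(\mu_+)=1$ follows by summing $a_e$ over $E\Gamma$ using Lemma \ref{edgelimit}. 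First, invoke Lemma \ref{biggood}: for any prescribed $\epsilon>0$ there exists $M'=M'(\delta,\epsilon)$ with $\gamma(\varphi^{M'}[w])\geq 1-\epsilon$ for every admissible $[w]$; absorbing $M'$ into the final constant, we may assume $\gamma(c)\geq 1-\epsilon$ from the outset.

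The key auxiliary estimate is a uniform convergence lemma on legal paths: for each $v$ and $\epsilon'>0$ there exists $N=N(v,\epsilon')$ such that for every legal edge-path $p$ with $\ell_\Gamma(p)\geq 1$ and every $n\geq N$,
\[
\left|\frac{\left<v,f^n(p)\right>}{\ell_\Gamma(f^n(p))}-a_v\right|\leq\epsilon'.
\]
This follows because legality of $p=e_1\cdots e_k$ forces $f^n(p)=f^n(e_1)\cdots f^n(e_k)$ to be reduced, giving $\left<v,f^n(p)\right>=\sum_j\left<v,f^n(e_j)\right>+O(|v|k)$ from the $k-1$ junctions; Lemma \ref{edgelimit} then yields uniform convergence over the finite set $E\Gamma$, and the bound $\ell_\Gamma(f^n(p))\geq(\lambda')^n k$ from Remark \ref{BiLipRemark} absorbs the boundary term. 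To apply this to $[f^n(c)]$, decompose $c=\ell_1\cdots\ell_I$ cyclically at its $I$ illegal turns, with $I\leq\epsilon\ell_\Gamma(c)$ by the proof of Lemma \ref{biggood}. Each $f^n(\ell_j)$ is legal, and $[f^n(c)]$ is obtained from $f^n(\ell_1)\cdots f^n(\ell_I)$ by reducing at the $I$ original boundaries. Iterated Bounded Cancellation (Lemma \ref{BCL}) bounds the cancellation at each boundary by $D_n\leq nC_f$, so $[f^n(c)]=L_1\cdots L_I$ where each $L_j$ is legal of length $\geq(\lambda')^n\ell_\Gamma(\ell_j)-2D_n$. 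Splitting
\[
\left<v,[f^n(c)]\right>=\sum_{j=1}^I\left<v,L_j\right>+O(|v|I),
\]
applying the legal-path estimate to each $L_j$, and dividing by $\ell_\Gamma([f^n(c)])\geq(\lambda')^n(1-\epsilon)\ell_\Gamma(c)$ places the ratio in $a_v\pm O(\epsilon'+\epsilon+nC_f\epsilon/(\lambda')^n)$ for all $n\geq N(v,\epsilon')$. Choosing $\epsilon$ (hence $M'$), then $\epsilon'$, then $n$, and combining over the finitely many $v$ that cut out $U$ yields the uniform integer $M$.

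The main obstacle is managing the interplay of parameters: the goodness threshold $\epsilon$ must be fixed first via Lemma \ref{biggood} so that both $I$ and the bad-length are uniformly small in $c$; the per-junction cancellation $D_n$ grows with $n$, but only polynomially, so the exponential growth of the denominator $(\lambda')^n$ beats it; and the quantifiers must be arranged so that $\epsilon$, $\epsilon'$, and the finitely many test $v$'s are all chosen before the iteration bound $n$. The trickiest technical point is verifying that $D_n\leq nC_f$ uniformly in $c$, which uses that the train-track map $f$ introduces no new illegal turns and that cancellation at each surviving illegal turn contributes at most $C_f$ per iteration, so the accumulated cancellation stays polynomial while the numerator and denominator both scale like $(\lambda')^n$.
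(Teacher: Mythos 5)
Your decomposition is at illegal turns into legal segments $\ell_1,\dots,\ell_I$, with the claim that the cancellation $D_n$ at each junction when forming $[f^n(c)]$ satisfies $D_n\le nC_f$. That bound is the gap: iterating bounded cancellation gives $D_n \le C_f\bigl(1+\mathrm{Lip}(f)+\cdots+\mathrm{Lip}(f)^{n-1}\bigr)\sim C_f\lambda^n/(\lambda-1)$, which is \emph{exponential} in $n$, not polynomial. (Concretely: $L_j^{(n)} = f(L_j^{(n-1)})$ minus at most $C_f$ edges, but $L_j^{(n-1)}$ was already $f^{n-1}(\ell_j)$ minus $D_{n-1}$ edges, whose image under $f$ is missing up to $\mathrm{Lip}(f)\cdot D_{n-1}$ edges; so $D_n\le\mathrm{Lip}(f)\,D_{n-1}+C_f$.) With $D_n\sim\lambda^n$ the lower bound $(\lambda')^n\ell_\Gamma(\ell_j)-2D_n$ is vacuous for any short segment with $\ell_\Gamma(\ell_j)\lesssim 2C$, and such short legal segments do occur between nearby illegal turns; they can be wholly cancelled, so the decomposition $[f^n(c)]=L_1\cdots L_I$ with each $L_j$ a surviving subpath of $f^n(\ell_j)$ need not exist, and cancellation may cascade through several consecutive pieces. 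Your final error estimate $nC_f\epsilon/(\lambda')^n$, which tends to $0$, should instead be of order $\lambda^n/(\lambda')^n$, which does not.

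The paper sidesteps exactly this by not cutting at illegal turns. Instead it uses the good/bad dichotomy with threshold $C=C_f/(\lambda'-1)$: an edge is good if it is $\ge C$ away from every illegal turn. The point of this particular constant is that the legal ends of a maximal bad segment have length $\ge C$, so after applying $f$ their length is $\ge\lambda'C\ge C+C_f$, and the $\le C_f$ edges of cancellation are absorbed entirely inside the legal end; inductively, \emph{good edges are never cancelled} under any $f^n$. Thus $[f^n(c)]$ genuinely contains each $f^n(c_i)$ ($c_i$ a good edge) as a subpath, with all cancellation confined to images of the bad segments, and the estimate becomes the four-term triangle inequality in the paper. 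To repair your argument you would need to replace the per-junction cancellation bookkeeping by this "buffer of length $C$" argument, i.e.\ effectively pass to the good/bad decomposition, after which your legal-path uniform estimate and parameter bookkeeping are sound.

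Two smaller points. First, the reduction to $\gamma(c)\ge 1-\epsilon$ via Lemma \ref{biggood} is fine, but note the paper needs the resulting bound $\sum_j\ell_\Gamma([f^n(b_j)])/\ell_\Gamma([f^n(c)])\le\epsilon/4$, which again rests on good edges surviving (so the denominator is genuinely $\gtrsim(\lambda')^n\gamma(c)\ell_\Gamma(c)$). Second, your statement "$I\le\epsilon\ell_\Gamma(c)$ by the proof of Lemma \ref{biggood}" is not quite what that proof gives; what you get from $\gamma(c)\ge 1-\epsilon$ is that the number of bad edges is $\le\epsilon\ell_\Gamma(c)$, and then $I\le\epsilon\ell_\Gamma(c)$ only because each illegal turn is flanked by bad edges — worth saying explicitly.
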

\begin{proof} Recall that $[\nu]$ is in $U$ if there exist  $\epsilon>0$ and $R>>0$ both depending on $U$ such that for all reduced edge paths $v$ with $\ell_{\Gamma}(v)\le R$ we have \[
\left|\dfrac{\left<v,\nu\right>}{\|\nu\|_{\Gamma}}-\dfrac{\left<v,\mu_{+}\right>}{\|\mu_{+}\|_{\Gamma}}\right|<\epsilon.
\]
So we need to show that for any conjugacy class $[w]\in F_{N}$ with $\gamma([w])>\delta$ we have \[
\left|\dfrac{\left<v,f^{n}(c)\right>}{\ell_\Gamma(f^{n}(c))}-\dfrac{\left<v,\mu_{+}\right>}{\|\mu_{+}\|_{\Gamma}}\right|<\epsilon
\]
for all $v$ with $\ell_{\Gamma}(v)\le R$.
Let us write $c=c_{1}c_{2}\dotsc b_{1}\dotsc c_{r}c_{r+1}\dotsc b_{2}\dotsc b_{k}c_{s}$ where $c_{i}\in E\Gamma$ and $b_{j}\in\mathcal{P}\Gamma$, where we denote good edges with $c_{i}$ and maximal bad segments with $b_{j}$. See Figure \ref{goodconjugacy}. 

By Lemma \ref{biggood} up to passing to a power let us assume that the goodness $\gamma(w)$ is close to 1, in particular $\gamma(w)\ge\dfrac{1}{1+\epsilon/4K^4}$ so that the ratio 
\[
\frac{\sum_{i=1}^{k}\ell_{\Gamma}([f^{n}(b_i)])}{\ell_{\Gamma}([f^{n}(c)])}\le\dfrac{(1-\gamma(w))\ell(c)(\lambda')^{n}K^{2}}{\gamma(w)\ell(c)(\lambda')^{n}\frac{1}{K^2}}=(\dfrac{1}{\gamma(w)}-1)K^{4}\le\epsilon/4,
\]
by using Remark \ref{BiLipRemark}. 
\\
Since there are only finitely many edges and finitely many words $v$ with $\ell_{\Gamma}(v)\le R$ by Lemma \ref{edgelimit} we can pick an integer $M_{0}\ge1$ such that 
\[
\left|\dfrac{\left<v,f^{n}(e)\right>}{\ell_{\Gamma}(f^{n}(e))}-\dfrac{\left<v,\mu_{+}\right>}{\|\mu_{+}\|_{\Gamma}}\right|<\epsilon/4
\]
for all $n\ge M_0$, for all $e\in E\Gamma$ and for all $v$ with $\ell_{\Gamma}(v)\le R$. 
Moreover we can pick an integer $M_{1}$ such that 
\[
\dfrac{R\ell_{\Gamma}(c)}{\ell_{\Gamma}(f^{n}(c))}<\epsilon/4
\]
for all $n\ge M_1$ since for all paths $c$ with goodness close to 1 the length of the path $c$ grows like $\lambda^{n}$ up to a multiplicative constant which is independent of the path. Here $\lambda$ is the Perron-Frobenius eigenvalue of $f$. Now set $M=\max{\{M_0,M_1\}}$.
\begin{figure}
\labellist
\small\hair 2pt
 \pinlabel {$b_{k-1}$} [ ] at 140 420
 \pinlabel {$c_2$} [ ] at 395 485
 \pinlabel {$c_s$} [ ] at 130 310
 \pinlabel {$c_1$} [ ] at 365 495
 \pinlabel {$b_3$} [ ] at 375 95
 \pinlabel {$b_k$} [ ] at 270 520
 \pinlabel {$b_2$} [ ] at 545 340
 \pinlabel {$c_{s-1}$} [ ] at 130 270
\endlabellist
\centering
\includegraphics[scale=0.32]{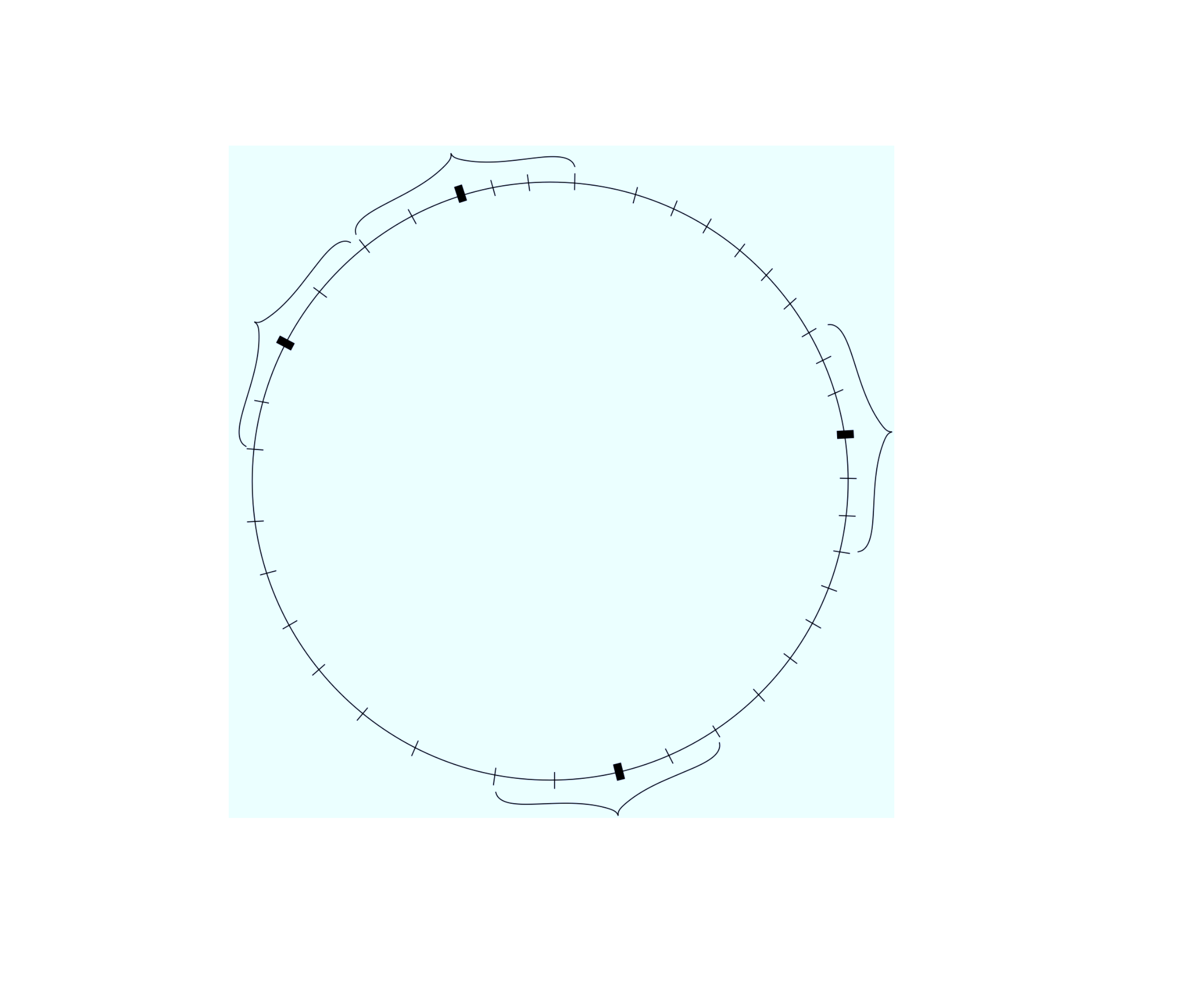}
\caption{Reduced circuit representing $w$}
\label{goodconjugacy}
\end{figure}

Then we have, 
\begin{align*}
&\left|\dfrac{\left<v,f^n(c)\right>}{\ell_{\Gamma}(f^{n}(c))}-\dfrac{\left<v,\mu_{+}\right>}{\|\mu_{+}\|_{\Gamma}}\right|\\
& \le \left|\dfrac{\left<v,f^n(c)\right>}{\ell_{\Gamma}(f^{n}(c))}-\sum_{i=1}^{s}\dfrac{\left<v,f^n(c_{i})\right>}{\ell_{\Gamma}(f^{n}(c))}\right|+
\left|\sum_{i=1}^{s}\dfrac{\left<v,f^n(c_{i})\right>}{\ell_{\Gamma}(f^{n}(c))}-\dfrac{\sum_{i=1}^{s}\left<v,f^n(c_i)\right>}{\sum_{i=1}^{s}\ell_\Gamma(f^{n}(c_i))}\right|+\left|\dfrac{\sum_{i=1}^{s}\left<v,f^n(c_i)\right>}{\sum_{i=1}^{s}\ell_\Gamma(f^{n}(c_i))}-\dfrac{\left<v,\mu_{+}\right>}{\|\mu_{+}\|_{\Gamma}}\right|
\\
& \le \dfrac{R\ell_{\Gamma}(c)}{\ell_{\Gamma}(f^{n}(c))}+\sum_{j=1}^{k}\dfrac{\left<v,[f^{n}(b_{j})]\right>}{\ell_{\Gamma}(f^{n}(c))}
+\left|\dfrac{\sum_{i=1}^{s}\left<v,f^n(c_i)\right>}{\sum_{i=1}^{s}\ell_\Gamma(f^{n}(c_i))+\sum_{j=1}^{k}\ell_{\Gamma}([f^{n}(b_j)])}-\dfrac{\sum_{i=1}^{s}\left<v,f^n(c_i)\right>}{\sum_{i=1}^{s}\ell_\Gamma(f^{n}(c_i))}\right|\\&
+\left|\dfrac{\sum_{i=1}^{s}\left<v,f^n(c_i)\right>}{\sum_{i=1}^{s}\ell_\Gamma(f^{n}(c_i))}-\dfrac{\left<v,\mu_{+}\right>}{\|\mu_{+}\|_{\Gamma}}\right|
\\
& < \epsilon/4+\epsilon/4+\epsilon/4+\epsilon/4=\epsilon,
\end{align*}
where the last part follows from the mediant inequality, and the third part follows from the following observation: 
\begin{align*}
&\left|\dfrac{\sum_{i=1}^{s}\left<v,f^n(c_i)\right>}{\sum_{i=1}^{s}\ell_\Gamma(f^{n}(c_i))+\sum_{j=1}^{k}\ell_{\Gamma}([f^{n}(b_j)])}-\dfrac{\sum_{i=1}^{s}\left<v,f^n(c_i)\right>}{\sum_{i=1}^{s}\ell_\Gamma(f^{n}(c_i))}\right|\\
&=\left|\dfrac{\big(\sum_{i=1}^{s}\left<v,f^n(c_i)\right>\big)\big(\sum_{j=1}^{k}\ell_{\Gamma}([f^{n}(b_j)])\big)}{\big(\sum_{i=1}^{s}\ell_\Gamma(f^{n}(c_i))\big)\big(\sum_{i=1}^{s}\ell_\Gamma(f^{n}(c_i))+\sum_{j=1}^{k}\ell_{\Gamma}([f^{n}(b_j)])\big)}\right|\\
&\le K^{4}\dfrac{\lambda^{n}\sum_{j=1}^{k}\ell_\Gamma(b_j)}{\lambda^{n}\sum_{i=1}^{s}\ell_\Gamma(c_i)}\le\epsilon/4
\end{align*}
by the choice of goodness.  

\end{proof}

The following Lemma is crucial in our analysis in Lemma \ref{dicho}. It follows from the definitions and results in \cite{BFH97} a proof of it can be found in \cite[Lemma 3.2]{L}. Alternatively, one can deduce it from \cite[Lemma 3.28]{KL6}. 
\begin{lem} \label{nicepath} Let $f$ be a train-track representative for a hyperbolic iwip as in Convention \ref{conv}. Then for any edge path $\gamma$ in $\Gamma$ there exist an integer $M_1$ such that for all $n\ge M_1$ the reduced edge path $[f^{n}(\gamma)]$ is a legal concatenation of INP's and legal edge paths.  
\end{lem}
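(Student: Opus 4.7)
The plan is to localize the problem at the illegal turns of $\gamma$ and then use the uniqueness statement of Lemma \ref{Nielsen} to pin down what survives under iteration. Write $\gamma = \gamma_{1}\gamma_{2}\cdots\gamma_{k}$ as a concatenation of maximal legal subpaths, so that the turn between $\gamma_{i}$ and $\gamma_{i+1}$ is illegal for each $i$. Since $f$ is a train-track map, each $f^{n}(\gamma_{i})$ is again a legal edge path, and by Remark \ref{BiLipRemark} its length grows at least like $(\lambda')^{n}$. Consequently the only place where $[f^{n}(\gamma)]$ can fail to be a legal concatenation of legal pieces is in a small window around each of the (possibly cancelled) illegal-turn sites; everything else is automatically legal and long.

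By Lemma \ref{Nielsen}(2), under Convention \ref{conv} there is a unique illegal turn in the entire graph $\Gamma$, and (if it exists) the unique INP $\eta$ is built around precisely this turn. Therefore every illegal turn appearing in any $[f^{n}(\gamma)]$ is this same combinatorial turn. Apply the Bounded Cancellation Lemma \ref{BCL} to the concatenation $f^{n}(\gamma_{i})\cdot f^{n}(\gamma_{i+1})$: the cancellation at each junction is bounded by $C_{f}$ on each side, and iterating shows that the total amount of cancellation accumulated under all forward iterates is uniformly bounded by some constant $D$ depending only on $f$. Thus the window of $[f^{n}(\gamma)]$ around each surviving illegal turn has bounded combinatorial size, while the flanking legal portions coming from $f^{n}(\gamma_{i})$ and $f^{n}(\gamma_{i+1})$ grow without bound.

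The key stabilization step is to identify this bounded window with a copy of $\eta$. Since $f(\eta)=\eta$ and the two legal branches of $\eta$ are the initial segments of $f^{n}$ applied to the two legal branches emanating from the unique illegal turn, for $n$ large enough the legal branches of $[f^{n}(\gamma)]$ on both sides of the illegal turn begin with the two legal branches of $\eta$, extended on each side by long legal tails. At that point one can split off a copy of $\eta$ at the turn, with a legal turn on each side separating $\eta$ from the remaining legal subpath; doing this at every illegal turn exhibits $[f^{n}(\gamma)]$ as a legal concatenation of INPs and legal edge paths for all $n \ge M_{1}$. The main obstacle is the last identification: one must show that the bounded remainder is not merely some pseudo-Nielsen fragment but actually coincides with $\eta$. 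This requires the uniqueness clauses of Lemma \ref{Nielsen}, the period-$1$ reduction of Convention \ref{conv}, and a careful BCL bookkeeping showing that the two legal branches at the illegal turn must eventually match, letter by letter, the corresponding branches of $\eta$ once enough of $\gamma_{i}$ and $\gamma_{i+1}$ has been expanded by $f^{n}$.
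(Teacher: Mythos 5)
The paper does not actually prove Lemma \ref{nicepath}; it invokes \cite[Lemma~3.2]{L} and \cite[Lemma~3.28]{KL6} (which ultimately rest on the ``weak attraction'' machinery of \cite{BFH97}). Your overall plan --- split $\gamma$ at illegal turns, use bounded cancellation to confine the non-legal behavior of $[f^n(\gamma)]$ to windows of size $O(C_f/(\lambda'-1))$ around the surviving illegal turns, and identify each stabilized window with the INP $\eta$ --- is the right skeleton, and is essentially the shape of the argument in those references. You are also right that the crux is the dichotomy: a given illegal turn either resolves in finitely many iterates or a bounded neighborhood of it stabilizes to $\eta$. But as you yourself flag, you do not prove that dichotomy, and that is where the entire content of the lemma lives; the BCL estimates alone only bound the cancellation window, they do not force the stabilized window to be a Nielsen path rather than some other bounded pattern that keeps shifting.

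Two more concrete problems with the step you sketch. First, the sentence ``the two legal branches of $\eta$ are the initial segments of $f^n$ applied to the two legal branches emanating from the unique illegal turn'' has the geometry reversed. Writing $\eta=\eta_1^{-1}\eta_2$ with the illegal turn at the junction, the Nielsen relation $[f(\eta)]=\eta$ forces $f(\eta_1)=\tau\eta_1$ and $f(\eta_2)=\tau\eta_2$ for a common legal $\tau$; thus $\eta_i$ is a \emph{terminal} segment of $f(\eta_i)$, not an initial one, and the eigenrays attach to the \emph{outer} endpoints of $\eta$, not to the illegal turn (this is exactly the structure of the singular leaves $\rho^{-1}\eta\rho'$ in Figure~\ref{leaves}). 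So the claimed identification of the legal branches of $[f^n(\gamma)]$ at the turn with prefixes of eigenrays does not hold, and the proposed route to the stabilization statement is not just incomplete but aimed in the wrong direction. Second, the assertion that ``there is a unique illegal turn in the entire graph $\Gamma$'' is only the conclusion of Lemma~\ref{Nielsen}(2) \emph{when an INP exists}; in the no-INP case $\Gamma$ may carry several illegal turns, and the lemma is then asserting that $[f^n(\gamma)]$ is eventually legal outright. That case is easier, but your argument as written silently assumes the INP case throughout. To repair the proof you would need to run a genuine pigeonhole/stabilization argument on the bounded windows (or just cite \cite{L} or \cite{KL6}, as the paper does).
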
 

\begin{defn} Let $\gamma$ be a reduced edge path in $\Gamma$. Let $M_1$ be an integer as in Lemma \ref{nicepath}. Then an illegal turn in $\gamma$ is called a \emph{non-INP illegal turn} if it disappears in $[f^{M_1}(\gamma)]$. 
\end{defn}

\begin{lem}\cite{Martin}\label{dicho} There exist an integer $M_{0}$ and some $\delta_{1},\delta_{2}>0$ such that for each $[w]\in F_{N}$ either $\gamma(\varphi^{m}([w]))\geq\delta_{1}$ or 

\[
\dfrac{ILT([f^{m}(c)])}{ILT(c)}\leq1-\delta_{2}
\] for all $m\ge M_{0}$ where $ILT(c)=number\ of\ illegal\ turns\ in\ c$.
\end{lem}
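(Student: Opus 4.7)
The plan is to set $M_0 \ge M_1$, where $M_1$ is the integer from Lemma \ref{nicepath}, so that for any cyclically reduced circuit $c \subset \Gamma$ representing $[w]$, the reduced path $[f^{M_0}(c)]$ is already a legal concatenation of INPs and legal edge-paths. By Lemma \ref{Nielsen} there is at most one INP $\rho$, of fixed length $L := \ell_{\Gamma}(\rho)$, and its unique illegal turn is the only illegal turn appearing in the graph. Since $f$ fixes $\rho$ (period $1$) and sends legal paths to legal paths, and since the decomposition of $[f^{M_0}(c)]$ is already a legal concatenation, the number of illegal turns stabilizes: $ILT([f^{m}(c)]) = j$ for every $m \ge M_0$, where $j = ILT([f^{M_0}(c)])$ equals the number of INPs appearing in that decomposition.

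With this stabilization, the dichotomy becomes a comparison between $j$ and $I_0 := ILT(c)$. If $j \le (1-\delta_2)\,I_0$, then the second alternative of the lemma holds for all $m \ge M_0$ simultaneously, and we are done. Otherwise $j > (1-\delta_2)\,I_0$, i.e.\ most illegal turns of $c$ ``survive'' under iteration and organize themselves into INPs of $[f^{M_0}(c)]$; in this branch we must establish the goodness bound. Since each surviving illegal turn sits inside a copy of $\rho$, the bad edges of $[f^{m}(c)]$ lie within $C$ of one of the $j$ illegal turns, so the total bad length is bounded by $j(L + 2C)$.

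For the denominator, I would combine the Bounded Cancellation Lemma \ref{BCL} with the observation that the only length-loss during an iteration comes from the illegal turns: each illegal turn contributes at most $2C_f$ of cancellation per step. The turns that die contribute loss only until the step at which they disappear; the turns that survive eventually stop contributing loss because they have been absorbed into $f$-fixed INPs. Summing a geometric series in $\lambda$ and using $I_0 - j \le \delta_2 I_0 \le \delta_2 \ell_0$ together with $I_0 \le \ell_0 := \ell_\Gamma(c)$, one obtains a bound of the form $\ell_\Gamma([f^{m}(c)]) \ge c_0\, \lambda^{m} \ell_0$ for a uniform constant $c_0 > 0$, provided $\delta_2$ is chosen small enough relative to $\lambda$, $C_f$ and $M_0$. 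Plugging this into the goodness, and again using $j \le I_0 \le \ell_0$,
\[
\gamma(\varphi^{m}[w]) \ge 1 - \frac{j(L+2C)}{\ell_\Gamma([f^{m}(c)])} \ge 1 - \frac{L+2C}{c_0\, \lambda^{m}},
\]
which is at least $\delta_1 := \tfrac{1}{2}$ once $M_0$ is sufficiently large. The various constants $\delta_1, \delta_2, M_0$ are then chosen in the order $M_0$ large, $\delta_2$ small, $\delta_1 = 1/2$.

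The main obstacle is the careful bookkeeping in the second branch: one must verify that, even though an illegal turn can contribute cancellation at every iteration before it eventually dies, the total accumulated loss is at most a small multiple of $\ell_0$ whenever $I_0 - j \le \delta_2 I_0$. The subtlety is that illegal turns may shift position or spawn new illegal turns at intermediate stages, so the argument requires either a direct induction tracking the illegal-turn population over the $M_0$ steps, or an appeal to the stabilization statement of Lemma \ref{nicepath} applied to small sub-segments of $c$ around each dying illegal turn. Once this uniform length bound is in hand, the rest of the dichotomy argument is essentially a one-line comparison.
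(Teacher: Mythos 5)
Your opening step already has a fatal gap. You propose to take $M_0 \ge M_1$ where $M_1$ is the integer from Lemma~\ref{nicepath}, and claim that for any circuit $c$, the reduced circuit $[f^{M_0}(c)]$ is already a legal concatenation of INPs and legal paths, so that $ILT([f^{m}(c)])$ stabilizes at a value $j$ for all $m\ge M_0$. But in Lemma~\ref{nicepath} the integer $M_1$ depends on the edge path $\gamma$: as $c$ ranges over all conjugacy classes, $\ell_\Gamma(c)$ and $ILT(c)$ are unbounded, and there is no uniform time by which every circuit has resolved all its non-INP illegal turns. Indeed, applying $f$ once can only destroy a bounded number of illegal turns (on the order of one per illegal turn of $c$, with bounded interaction between neighbours), so a circuit with $I_0$ illegal turns may need on the order of $I_0$ iterations before it becomes a legal concatenation of INPs; no single $M_0$ works simultaneously. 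Consequently the stabilized integer $j$ on which your whole dichotomy is built is not defined, and the branch $j > (1-\delta_2) I_0$ cannot be set up at time $M_0$.

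This is precisely why the paper's proof does not apply Lemma~\ref{nicepath} directly to $c$: it first subdivides $c$ into subpaths of uniformly bounded length ($6C+1$ or $8C+1$). Since there are only finitely many reduced paths of that length, Lemma~\ref{nicepath} does give a single $M_1$ that works for all of them, and the argument then carefully counts how many illegal turns are lost inside the subpaths versus how many new illegal turns can appear at the finitely many concatenation points, producing the uniform constants $\delta_1, \delta_2$. A correct version of your argument would have to replace the global stabilization claim by this local, bounded-window analysis (or an equivalent induction over iterations tracking the illegal-turn population, which you gesture at in your last paragraph but do not carry out). As written, the proof does not establish the lemma.

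A secondary issue: your bound $j(L+2C)$ on the total bad length and the ensuing lower bound $\ell_\Gamma([f^m(c)]) \ge c_0 \lambda^m \ell_0$ both presuppose the non-existent stabilization, and the constant $c_0$ is asserted to exist ``provided $\delta_2$ is small enough'' without the bookkeeping that would be needed to justify it; this is exactly the bookkeeping the paper avoids by working with bounded-length subpaths.
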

\begin{proof} There are two cases to consider in terms of existence of INP's.
\setcounter{case}{0}
\begin{case}[There are no INP's in the graph $\Gamma$] 
\end{case}  We can assume that the length of the circuit $c$ representing $w$ is greater than $2(6C+1)$ as there are finitely many edge-paths of $length\le2(6C+1)$, we can find a uniform power satisfying the properties of the lemma and take the maximum of that with the below $M_{0}$. 
Now subdivide the circuit representing the conjugacy class $[w]\in F_{N}$ into pieces of length  $6C+1$ with an exception of last subsegment being of $length\le(6C)$. Now let $M_{1}\ge0$ be a number such that for every reduced edge-path $v\in\Gamma$ with $l(v)\le6C+1$ the edge-path $[f^{m}(v)]$ is legal for all $m\ge M_{1}$. Now observe that in the set of all subpaths of $length=6C+1$ either at least half of them have $\le2$ illegal turns, or at least half of them have $\ge3$ illegal turns. In the first case the \emph{goodness} $\gamma([w])\ge1/((6C+1)2+6C)$. Thus by Lemma \ref{biggood} there is an integer $M'\ge0$ such that $\gamma(\varphi^{m}([w])\ge\delta_{1}$ for all $m\ge M'$. In the second case after applying $f^{m}$ to $c$ for $m\ge M_{1}$ at least half of the subpaths with length $6C+1$ will lose at least 3 illegal turns but will form at most two new illegal turns at the concatenation points. Therefore 
\[
\dfrac{ILT([f^{m}(c)])}{ILT(c)}\leq\dfrac{2K+1}{3K+1}\le\dfrac{3}{4},
\] where 2K is the number of subpaths of $length=6C+1$ in $c$. Now, set $M_{0}=\max\{M',M_{1}\}$. Then $M_0$ satisfies the requirements of the Lemma. 
\begin{case}[There is exactly one INP in the graph $\Gamma$]
\end{case}
Similar to the previous case we can assume that $l(c)\ge4(8C+1)$, and subdivide the circuit as above and let $M_{1}\ge0$ be a number such that for every reduced edge-path $v\in\Gamma$ with $l(v)\le8C+1$ for all $m\ge M_{1}$ the path $[f^{m}(c)]$ is a concatenation of INP's and legal segments of length at least $2C+1$ where the turns at the concatenations are also legal turns. For a subpath $\gamma$ of $c$ of $length=8C+1$ one of the following subcases occurs:
\begin{enumerate}
\item The number of illegal turns in the subpath $\gamma$ is $\le3$. 
\item $\gamma$ has at least 3 non-INP illegal turns.   
\item $\gamma$ has two non-INP illegal turns and at least 2 INP illegal turns.
\item $\gamma$ has more than 3 INP illegal turns and more than 4 illegal turns overall.
\end{enumerate}
At least quarter of the subpaths of $c$ of length $8C+1$ satisfy one of the above possibilities. \\
\indent If $(1)$ happens then there is at least one good edge in at least quarter of subpaths of $c$ of $length=8C+1$, so as in the Case 1 the \emph{goodness} $\gamma([w])>0$, hence we can find an integer $M'\ge0$ such that $\gamma(\varphi^{m}([w]))\ge \delta_{1}$ for all $m\ge M'$. \\
\indent If $(2)$ happens then at least quarter of subpaths of $c$ of length $8C+1$ will lose at least 3 illegal turns but $[f^{m}(c)]$ will form at most two new illegal turns. Because of the assumption on the length $\ell(c)$, we can write 
\[
4K(8C+1)\le\ell(c)<(4K+1)(8C+1)
\]
for some integer $K\ge1$. Now we have,
\[
ILT(f^{m}(c))\le ILT(c)-3K+2K
\]
where $-3K$ comes from the illegal terms lost in at least $K$ subpaths, and $2K$ comes from possible new illegal terms formed at the concatenation points. Hence,
\[
\dfrac{ILT([f^{m}(c)])}{ILT(c)}\leq1-\dfrac{K}{ILT(c)}\leq1-\dfrac{K}{\ell(c)}\le1-\dfrac{K}{(4K+1)(8C+1)}<1-\delta_2
\]
for some $\delta_{2}>0$  since the right-hand side approaches to $(1-1/32C)$ as $K$ gets bigger and the sequence of values strictly bounded by $1$ for all values of $K$.  \\

\indent If $(3)$ happens, look at the images of the subpaths of $c$ under the map $f^{M_{1}}$. First note that because of the Convention \ref{conv} there is a unique INP in $\Gamma$ up to inversion. Since $f$ represents a hyperbolic automorphism, there are no consecutive INP's as that would imply a periodic conjugacy class. Hence there will be legal edges between two INP's and up  passing to a further power we can assume that there are good edges between them. There are two cases to consider:
\begin{enumerate}
\item[$(3a)$] After concatenating the iterates of subpaths to form $f^{M_1}(c)$ and reducing it to obtain $[f^{M_1}(c)]$ at least one good edge survives inside of an iterate of a subpath.  
\item[$(3b)$] After concatenating the iterates of subpaths and reducing to form $[f^{M_1}(c)]$ good edges disappear which also means that illegal turn in one of the INP's together with an illegal turn in a matching INP also disappear since to cancel with good edges, they have to pass through the INP. 
\end{enumerate}
This means that for at least $1/8$ of all subpaths of length $8C+1$ either $(3a)$ happens or $(3b)$ happens. Similar to the subcases (1) and (2) either goodness $\gamma(\varphi^{m}([w]))>\delta_{1}$ or
\[
 \dfrac{ILT([f^{m}(c)])}{ILT(c)}\le\delta_{2}
\]
for some $\delta_{1},\delta_{2}>0$ and for all $m\ge M_{1}$. \\
\indent If $(4)$ happens, similar to $(3)$ there are two cases to consider:
\begin{enumerate}
\item[$(4a)$] After concatenating the iterates of subpaths at least one good edge survives inside of an iterate of a subpath.  
\item[$(4b)$] After concatenating the iterates of subpaths all good edges disappear which also means that illegal turn in two of the INP's together with  matching INP's also disappear since to cancel with good edges, they have to pass through the INP's. 
\end{enumerate}
Similar to the case (3), either there is a definite amount of goodness in $f^{M_1}(c)$ or number of illegal turns decrease by a definite proportion in $f^{M_1}(c)$. 
\end{proof}

\begin{lem} \label{dicof}Let $f$ be a train-track representative for $\varphi$ as in \ref{conv}. Then given any $D>1,\epsilon>0$ there exists an $L>0$ such that for all $[w]\in F_{N}$ either
\begin{enumerate}
\item $ILT([f^{-L}(c)])\geq D\ell_{\Gamma}(c)$ or
\item $\gamma(\varphi^{L}([w]))\geq1-\epsilon$
\end{enumerate}
where $f^{-L}(c)$ is the immersed circuit in $\Gamma$ representing $\varphi^{-L}([w])$.   
\end{lem}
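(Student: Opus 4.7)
The plan is to combine Lemma \ref{dicho}, iterated along the backward orbit of $[w]$, with Lemma \ref{biggood}, after splitting on the goodness of $[w]$ itself. Write $\delta_1,\delta_2,M_0$ for the constants furnished by Lemma \ref{dicho} and $M' = M'(\delta_1,\epsilon)$ for the integer furnished by Lemma \ref{biggood}.

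First I would dispose of the case $\gamma([w]) \geq \delta_1$ immediately: Lemma \ref{biggood} gives $\gamma(\varphi^m([w])) \geq 1-\epsilon$ for all $m \geq M'$, so conclusion $(2)$ holds as soon as $L \geq M'$. I would then concentrate on the opposite regime $\gamma([w]) < \delta_1$ and translate it into a linear lower bound on the number of illegal turns of the circuit $c$ representing $[w]$. Every bad edge lies within simplicial distance $C$ of an illegal turn, so each illegal turn accounts for at most $2C$ bad edges; consequently
\begin{equation*}
ILT(c) \;\geq\; \frac{(1-\gamma([w]))\,\ell_\Gamma(c)}{2C} \;\geq\; \frac{1-\delta_1}{2C}\,\ell_\Gamma(c) \;=:\; \kappa\,\ell_\Gamma(c),
\end{equation*}
with $\kappa > 0$ depending only on $\delta_1$ and $C$.

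Next, for an integer $N$ to be determined, I would set $L = N M_0$ and introduce the backward iterates $[u_j] := \varphi^{-jM_0}([w])$, represented by the immersed circuits $c_j = f^{-jM_0}(c)$ for $j = 0,1,\dots,N$. Apply Lemma \ref{dicho} to each $[u_{j+1}]$. Its goodness branch would assert $\gamma(\varphi^m([u_{j+1}])) \geq \delta_1$ for every $m \geq M_0$; selecting $m = (j+1)M_0$ this says $\gamma([w]) \geq \delta_1$, contradicting the standing assumption. Hence the illegal-turn branch must fire, and taking $m = M_0$ yields $ILT(c_j) \leq (1-\delta_2)\,ILT(c_{j+1})$. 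Iterating this estimate from $j = N-1$ down to $j = 0$ gives
\begin{equation*}
ILT(f^{-L}(c)) \;=\; ILT(c_N) \;\geq\; (1-\delta_2)^{-N}\,ILT(c) \;\geq\; (1-\delta_2)^{-N}\,\kappa\,\ell_\Gamma(c).
\end{equation*}
I would then choose $N$ large enough that $(1-\delta_2)^{-N}\kappa \geq D$ and take $L = \max(M',\,N M_0)$, so that $(1)$ holds when $\gamma([w]) < \delta_1$ and $(2)$ holds otherwise.

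The step I expect to be most delicate is the interlock between the two branches of Lemma \ref{dicho} along the backward trajectory: one must use the ``for all $m \geq M_0$'' strength of the goodness branch to evaluate it at the specific exponent $m = (j+1)M_0$, so that the hypothesis $\gamma([w]) < \delta_1$ uniformly eliminates that branch at every level $j$. The auxiliary conversion from low goodness of $c$ to a linear lower bound on $ILT(c)$ is more routine but equally essential, since it is what upgrades the bare geometric factor $(1-\delta_2)^{-N}$ into the desired comparison with $D\,\ell_\Gamma(c)$.
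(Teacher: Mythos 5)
Your proof is correct and follows essentially the same route as the paper's argument: split on the goodness of $[w]$ itself, dispose of the high-goodness case with Lemma \ref{biggood}, and in the low-goodness case iterate the illegal-turn branch of Lemma \ref{dicho} along the backward orbit, converting the low-goodness hypothesis into the linear lower bound $ILT(c) \geq \frac{1-\delta_1}{2C}\,\ell_\Gamma(c)$ via the "each illegal turn accounts for at most $2C$ bad edges" count. Your bookkeeping is in fact a bit cleaner than the paper's: the paper phrases its case split in terms of $\gamma(\varphi^{M_0}([w]))$ and then invokes $\gamma(c)<\delta_1$ when estimating $ILT(c)$ against $\ell_\Gamma(c)$, whereas your choice of casing directly on $\gamma([w])<\delta_1$ makes that estimate immediate and also makes the elimination of the goodness branch of Lemma \ref{dicho} at each level $j$ transparent, since $\varphi^{(j+1)M_0}([u_{j+1}])=[w]$.

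One small point worth tightening: in the final step you set $L = \max(M', NM_0)$ and assert that (1) still holds in the low-goodness case, but you have only established $ILT(f^{-NM_0}(c)) \geq D\,\ell_\Gamma(c)$. If $M' > NM_0$ you need the additional (standard) observation that the number of illegal turns of a reduced circuit does not increase under application of $f$, hence $ILT(f^{-L}(c))$ is non-decreasing in $L$; alternatively, just enlarge $N$ so that $NM_0 \geq M'$ as well and take $L = NM_0$. Either fix is immediate, but the argument as written elides it.
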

\begin{proof} Let $M_{0}$ be as in Lemma \ref{dicho}, then by applying Lemma \ref{biggood} take a further power of $\varphi$ such that if $\gamma(\varphi^{M_{0}}([w]))\ge\delta_{1}$ then 
\[
\gamma((\varphi^{M_{0}})^{M'}([w]))\ge1-\epsilon.
\]
In the previous lemma, if it happens for $[w]$ that $\gamma(\varphi^{M_{0}}([w]))<\delta_{1}$ but 
\[
\dfrac{ILT([f^{m}(c)])}{ILT(c)}\leq1-\delta_{2}
\] for all $m\ge M_{0}$, then we have
\[
ILT(f^{-M_0}(c))\ge\dfrac{ILT(c)}{1-\delta_{2}},
\] 
since $\gamma(f^{M_0}(f^{-M_0}(c)))=\gamma(c)<\delta_{1}$. Indeed, if it was true that \[
\gamma(f^{M_0}(f^{-M_0}(c)))\ge\delta_{1}
\]
then that would imply  $\gamma(\varphi^{M_{0}}([w]))\ge\delta_{1}$ which contradicts with our assumption. An inductive argument on $M''$ shows that for all $M''\ge1$
\[
ILT(f^{-M_{0}M''}(c))\ge\dfrac{ILT(c)}{(1-\delta_{2})^{M''}}.
\]
Also, notice that since $\gamma(c)<\delta_{1}$ we have 
\[
\dfrac{\emph{number of bad edges in c}}{\ell_{\Gamma}(c)}\ge1-\delta_{1} 
\]
and by definition we have
\[
\emph{number of bad edges in c}\le2C(ILT(c)).
\]
Hence we have 
\begin{align*}
ILT(f^{-M_{0}M''}(c))&\ge\dfrac{ILT(c)}{(1-\delta_{2})^{M''}}
\ge\dfrac{number\ of\ bad\ edges\ in\ c}{2C(1-\delta_{2})^{M''}}
\ge\dfrac{\ell_{\Gamma}(c)(1-\delta_1)}{2C(1-\delta_{2})^{M''}}.
\end{align*}
Let $M''>0$ be such that $\dfrac{1-\delta_{1}}{2C(1-\delta_2)^{M''}}\ge D$. Then $L=\max\{M_{0}M',M_{0}M''\}$ satisfies the requirements of the lemma.  
\end{proof}

\begin{lem}\label{RMlast} Given neighborhoods $U$ of $[\mu_{+}]$ and $V$ of $[\mu_{-}]$ there exists an $M\ge0$ such that for any conjugacy class $[w]\in F_{N}$ either $\varphi^{M}([\eta_w])\in U$ or $\varphi^{-M}([\eta_w])\in V$. 
\end{lem}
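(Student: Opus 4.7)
Plan. The strategy is to apply Lemma \ref{dicof} in both time directions simultaneously. Since $\varphi^{-1}$ is also a hyperbolic iwip, after passing to a common high power we may find a train-track representative $f' : \Gamma' \to \Gamma'$ for $\varphi^{-1}$ satisfying Lemma \ref{Nielsen} and the analog of Convention \ref{conv}, with stable current $[\mu_-]$. Fix $\delta \in (0, 1)$ so that Lemma \ref{convgood} applies to $(U, \varphi)$ yielding $M_U$ and to $(V, \varphi^{-1})$ yielding $M_V$. Choose $\epsilon := 1 - \delta$ and a large parameter $D > 1$ to be specified, and apply Lemma \ref{dicof} to $\varphi$ and to $\varphi^{-1}$ with parameters $(D, \epsilon)$, obtaining integers $L_1$ and $L_2$. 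Set $M := \max\{L_1 + M_U,\; L_2 + M_V\}$.

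For any $[w] \in F_N$ with circuit $c \subset \Gamma$ and $c' \subset \Gamma'$, the two applications of Lemma \ref{dicof} give:
\begin{align*}
\text{(I)} &\quad \gamma_f(\varphi^{L_1}([w])) \geq 1 - \epsilon \quad \text{or} \quad ILT_f([f^{-L_1}(c)]) \geq D \, \ell_\Gamma(c), \\
\text{(II)} &\quad \gamma_{f'}(\varphi^{-L_2}([w])) \geq 1 - \epsilon \quad \text{or} \quad ILT_{f'}([f'^{-L_2}(c')]) \geq D \, \ell_{\Gamma'}(c').
\end{align*}
The goodness alternative in (I), via Lemma \ref{convgood} applied to $\varphi^{L_1}([w])$, gives $\varphi^M([\eta_w]) \in U$; the goodness alternative in (II) symmetrically gives $\varphi^{-M}([\eta_w]) \in V$. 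Hence the conclusion holds whenever at least one goodness alternative occurs for $[w]$.

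The main obstacle is ruling out the simultaneous occurrence of both ``ILT-high'' alternatives for an appropriate choice of $D$. The key observation is that since each illegal turn must occur at the boundary between two edges, $ILT \leq \ell$ in any reduced path; thus both bad alternatives yield simplicial-length lower bounds $\ell_\Gamma([f^{-L_1}(c)]) \geq D \ell_\Gamma(c)$ and $\ell_{\Gamma'}([f'^{-L_2}(c')]) \geq D \ell_{\Gamma'}(c')$. Against these, the bi-Lipschitz equivalence of the train-track and simplicial metrics (Remark \ref{BiLipRemark}), combined with the Perron--Frobenius expansion rates for $f$ and $f'$, yields upper bounds of the form $\ell \leq C \lambda^{L_i} \ell(c)$. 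Tracing the dependence $L_i = O(\log D)$ through the proof of Lemma \ref{dicof}, and passing to a sufficiently high power of $\varphi$ to ensure $(1 - \delta_2) \lambda^{M_0} < 1$ where $\delta_2, M_0$ are the constants from Lemma \ref{dicho}, one can choose $D$ large enough to render both bounds incompatible. This forces at least one goodness alternative to hold, completing the proof. The delicate step is verifying this constant balance, which requires a close reading of the quantitative estimates in Lemmas \ref{dicho} and \ref{dicof}; the rest of the argument is a clean case-split using only Lemma \ref{convgood}.
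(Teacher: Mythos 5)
The proposal does not work as stated; there is a genuine gap at the crucial step where you try to ``rule out the simultaneous occurrence of both ILT-high alternatives.'' Note that the ILT-high alternative in Lemma \ref{dicof} applied to $\varphi$ reads $ILT([f^{-L_1}(c)])\ge D\,\ell_\Gamma(c)$, which (via $ILT\le\ell$ and bi-Lipschitz comparability) forces $\|\varphi^{-L_1}(w)\|$ to be much larger than $\|w\|$, while the same alternative for $\varphi^{-1}$ forces $\|\varphi^{L_2}(w)\|$ to be much larger than $\|w\|$. These two statements say that the conjugacy class grows a lot under both positive and negative iteration, and for a hyperbolic iwip this is completely generic: every nontrivial conjugacy class has $\|\varphi^n(w)\|$ growing exponentially in $|n|$. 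So there is simply no contradiction between the two ILT-high branches, and no choice of $D$ (or any ``constant balancing'' condition such as $(1-\delta_2)\lambda^{M_0}<1$) will produce one. The ``upper bound'' you invoke from Remark \ref{BiLipRemark} and Perron--Frobenius controls forward iterates $\ell([f^{n}(c)])$, not the backward iterate $\ell([f^{-L_1}(c)])$, and in any case an upper bound on $\ell([f^{-L_1}(c)])$ in terms of the Lipschitz constant of $\varphi^{-1}$ is perfectly compatible with the lower bound $D\,\ell_\Gamma(c)$ once $L_1$ grows.

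The idea that \emph{does} work — and is what the paper actually uses — is not a dichotomy contradiction but a one-sided implication: the ILT-high alternative for $f$ \emph{directly forces} high $\gamma'$-goodness for $\varphi^{-L}([w])$ in $\Gamma'$, with no second application of Lemma \ref{dicof}. The mechanism is a comparison of two \emph{different} illegal-turn counts on circuits representing the \emph{same} conjugacy class: $ILT_f([f^{-L}(c)])$ being $\ge D\,\ell_\Gamma(c)$ forces (via the bi-Lipschitz constant $B$ between $\|\cdot\|_\Gamma$ and $\|\cdot\|_{\Gamma'}$, with the choice $D=4C'B^2$) the length $\ell_{\Gamma'}(g^L(c'))$ to be $\ge 4C'\,\ell_{\Gamma'}(c')$; on the other hand, $ILT_g(g^L(c'))\le ILT_g(c')\le\ell_{\Gamma'}(c')$ since illegal turns for $g$ never increase under $g$-iteration. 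The number of $g$-bad edges is at most $2C'\cdot ILT_g(g^L(c'))\le 2C'\ell_{\Gamma'}(c')$, which is at most half of $\ell_{\Gamma'}(g^L(c'))$, so $\gamma'(\varphi^{-L}([w]))\ge 1/2$. You should replace the attempted contradiction with this direct argument; the rest of your setup (applying Lemma \ref{convgood} to both maps, taking $M$ large enough) is then fine.
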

\begin{proof} Let $g:\Gamma'\to\Gamma'$ be a train-track representative for $\varphi^{-1}$ which adheres to the Convention \ref{conv}. Let $C_g$ be the bounded cancellation constant for $g:\Gamma'\to\Gamma'$ where $\Gamma'$ is equipped with the simplicial metric, and analogously define $C'$ and the \emph{goodness} $\gamma'$ for $g:\Gamma'\to\Gamma'$.  It is well known that translation length functions corresponding to any two points in the unprojectivized outer space are bi-Lipschitz equivalent. Therefore there exists a real number $B=B(\Gamma,\Gamma')\ge1$ such that
\[
\dfrac{1}{B}\|w\|_{\Gamma'}\le\|w\|_{\Gamma}\le B\|w\|_{\Gamma'}
\] for all $[w]\in F_{N}$.
Pick $D=4C'(B^{2})$ in the Lemma \ref{dicof}.  Now let $[w]$ be a conjugacy class for which $(2)$ holds in Lemma \ref{dicof}. We can find an integer $R$ as in Lemma \ref{convgood} such that for all $[w]$ with goodness $\gamma([w])\ge1-\epsilon$, $\varphi^{n}([\eta_{w}])\in U$ for all $n\ge R$.  Let $[w]$ be a conjugacy class for which $(1)$ holds in Lemma \ref{dicof}. Then by using the bi-Lipschitz equivalence we have:
\begin{align*} 
B\ell_{\Gamma'}(g^{L}(c'))=B\|\varphi^{-L}(w)\|_{\Gamma'}&\ge\|\varphi^{-L}(w)\|_{\Gamma}\\
&\ge ILT(f^{-L}(c))\\
&\ge4C'(B^{2})\ell_{\Gamma}(c)\\
&\ge4C'(B^2)\dfrac{1}{B}\|w\|_{\Gamma'}=B(4C')\ell_{\Gamma'}(c').
\end{align*}
Therefore $\ell_{\Gamma'}(g^{L}(c'))\ge(4C')\ell_{\Gamma'}(c')$. Since number of illegal turns never increase after applying powers of $g$ this means that number of bad edges are uniformly bounded by $2C'\ell_{\Gamma'}(c)$ for $g^{L}(c')$ which in turn implies that $\gamma'(\varphi^{-L}([w]))\ge1/2$ because at least half of the edges must be good. Let $R'$ be an integer such that for all $[w]$ with $\gamma'([w])\ge1/2$, $\varphi^{-n}([\eta_{w}])\in V$ for all $n\ge R'$. Now let $M=\max\{LR,LR'\}$ then for any conjugacy class $[w]\in F_{N}$ either $\varphi^{M}([\eta_{w}])\in U$ or $\varphi^{-M}([\eta_{w}])\in V$. 
\end{proof}
\begin{prop}\label{power} Suppose that $\varphi\in Out(F_N)$ is a hyperbolic iwip, and $M\ge1$ is an integer such that the conclusion of the Theorem \ref{mainthm} holds for $\varphi'=\varphi^{M}$. Then, Theorem \ref{mainthm} holds for $\varphi$.
\end{prop}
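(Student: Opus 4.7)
The plan is to reduce the dynamics of $\varphi$ to those of $\varphi' = \varphi^M$ by simple division with remainder. Two ingredients make the reduction routine. First, by Remark \ref{unstable}, the stable and unstable currents of $\varphi^M$ agree projectively with $[\mu_+]$ and $[\mu_-]$, so the neighborhoods appearing in the statement of Theorem \ref{mainthm} are literally the same whether we work with $\varphi$ or $\varphi^M$. Second, $\varphi$ acts on $\mathbb{P}Curr(F_N)$ as a homeomorphism that fixes both $[\mu_+]$ and $[\mu_-]$, since $\varphi\mu_+ = \lambda\mu_+$ and $\varphi\mu_- = \lambda^{-1}_{-}\mu_-$ (applying Remark \ref{unstable} to $\varphi$ and to $\varphi^{-1}$).

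For the forward statement, I would fix a compact set $K_0 \subset \mathbb{P}Curr(F_N)$ with $[\mu_-] \notin K_0$ and an open neighborhood $U$ of $[\mu_+]$. For each residue $r \in \{0,1,\dots,M-1\}$, set $K_r := \varphi^r(K_0)$. Each $K_r$ is compact, and because $\varphi^r$ is a homeomorphism fixing $[\mu_-]$, we have $[\mu_-] \notin K_r$. Applying the hypothesis (Theorem \ref{mainthm} for $\varphi^M$) to each pair $(K_r, U)$, obtain integers $N_r \ge 0$ with $(\varphi^M)^k(K_r) \subset U$ for every $k \ge N_r$. Put $N := \max_{0 \le r < M} N_r$ and $M_0 := (N+1)M$. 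For any $m \ge M_0$, write $m = kM + r$ with $0 \le r < M$; then $k \ge N \ge N_r$, and
\[
\varphi^m(K_0) \;=\; (\varphi^M)^k\!\bigl(\varphi^r(K_0)\bigr) \;=\; (\varphi^M)^k(K_r) \;\subset\; U,
\]
which is the desired conclusion.

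The backward statement is entirely symmetric: given compact $K_1$ with $[\mu_+] \notin K_1$ and an open neighborhood $V$ of $[\mu_-]$, the sets $K'_r := \varphi^{-r}(K_1)$ are compact and avoid $[\mu_+]$ (since $\varphi$ fixes $[\mu_+]$), so applying the backward part of Theorem \ref{mainthm} for $\varphi^M$ to each pair $(K'_r, V)$ and recombining as above produces the required $M_1$. The argument is essentially bookkeeping; there is no substantive obstacle, and its only purpose is to transfer the conclusion from the power of $\varphi$ that satisfies Convention \ref{conv} and Lemma \ref{Nielsen} back to $\varphi$ itself.
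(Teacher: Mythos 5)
Your proof is correct and follows essentially the same approach as the paper: decompose $n = kM + r$ by residues mod $M$ and exploit that $\varphi$ is a homeomorphism fixing $[\mu_\pm]$. The only cosmetic difference is bookkeeping — you apply the hypothesis for $\varphi^M$ to the $M$ compact sets $\varphi^r(K_0)$, while the paper shrinks the open neighborhood of $[\mu_-]$ to $U_1 = \bigcap_{i=0}^{M}\varphi^{-i}(U)$ and applies the hypothesis once to its complement, which is the complementary description of the same idea.
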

\begin{proof}
Assume that $\lim_{m\to\infty}(\varphi^{M})^{m}([\nu])=[\mu_{+}]$ for all $[\nu]\ne[\mu_{-}]$. Now, for any $[\nu]\ne[\mu_{-}]$ the sequence $\{\varphi^{r}([\nu])\}$ splits into $M$ sequences: \\
\indent $\{\varphi([\nu]),\varphi^{M+1}([\nu]),\varphi^{2M+1}([\nu]),\dotsc\}$\\
\indent $\{\varphi^{2}([\nu]),\varphi^{M+2}([\nu]),\varphi^{2M+2}([\nu]),\dotsc\}$\\
\indent $\vdots$\\
\indent $\{\varphi^{M}([\nu]),\varphi^{2M}([\nu]),\varphi^{3M}([\nu]),\dotsc\}$\\
all of which converge to the same limit by the assumption on $\varphi^{M}$. Therefore \[
\lim_{r\to\infty}\varphi^{r}([\nu])=[\mu_{+}]
\] for any $[\nu]\ne[\mu_{-}]$. Now let $U$ be an open neighborhood of $[\mu_{-}]$ and $V$ be an open neighborhood of $[\mu_{+}]$. Set \[
U_{1}=U\cap\varphi^{-1}(U)\cap\varphi^{-2}(U)\cap\dotsc\cap\varphi^{-M}(U).
\]
Note that $U_{1}$ is an open neighborhood of $[\mu_{-}]$ and $U_{1}\subseteq U$. By uniform convergence for $\varphi^{M}$ there exists $m_{0}\ge1$ such that for all $m\ge m_{0}$,
\[
\varphi^{Mm}(\mathbb{P}Curr(F_{N})\backslash U_{1})\subseteq V.
\]
Now let $[\nu]\in(\mathbb{P}Curr(F_{N})\backslash U)$ be an arbitrary current and $n\ge Mm_{0}$ be an arbitrary integer. Let us write $n=mM+i$ where $m\ge m_{0}$, $0\le i\le M-1$. First observe that \[
\varphi^{-(M-i)}([\nu])\notin U_{1}\ i.e.\ \varphi^{-(M-i)}([\nu])\in(\mathbb{P}Curr(F_{N})\backslash U_{1}).
\]
Then \[
\varphi^{n}([\nu])=\varphi^{Mm+M-M+i}([\nu])=\varphi^{(m+1)M}\varphi^{-(M-i)}([\nu])\in V
\]
by the choice of $m_{0}$, which finishes the proof of uniform convergence for $\varphi$. Convergence properties for the negative iterates of $\varphi$ follow as above.
\end{proof}
Now, we are ready to prove the main theorem of this paper. 
\begin{proof}[Proof of Theorem \ref{mainthm}] Let us define the \emph{generalized goodness} with respect to $\Gamma$ for an arbitrary non-zero geodesic current as follows:\[
\gamma(\nu)=\dfrac{1}{2\|\nu\|_{\Gamma}}\sum_{\substack{\ell_{\Gamma}(v)=2C+1\\ v\ is\ legal}}\left<v,\nu\right>.
\]
This coincides with the \emph{goodness} for conjugacy classes and it is continuous on $Curr(F_{N})\smallsetminus\{0\}$. Indeed, by using switch conditions one can write \[
\|w\|_{\Gamma}=\frac{1}{2}\left(\sum_{\substack{\ell_{\Gamma}(v)=2C+1\\ v\ is\ legal}}\left<v,\eta_{w}\right>+\sum_{\substack{\ell_{\Gamma}(v)=2C+1\\ v\ has\ I.T.}}\left<v,\eta_{w}\right>\right)
\]
from which it is easy to see that $\gamma(\eta_{w})=\gamma(w)$. 
We can also define \emph{generalized goodness} with respect to $\Gamma'$ by using $C'=\dfrac{C_{g}}{\lambda'_g-1}$. Observe that 
\[
\gamma(\mu_{+})=1\ and\ \gamma'(\mu_{-})=1
\]
with above definitions. Moreover, \emph{generalized goodness} is well defined for the projective class of a current so we will use $\gamma([\eta_{w}])=\gamma(\eta_{w})$ interchangeably.

Let $Z$ be the number of legal edge-paths $v\in\mathcal{P}\Gamma$ with $\ell_{\Gamma}(v)=2C+1$ and $\epsilon_1>0$ be a  real number such that $Z\epsilon_1<1/2$. Similarly let $Z'$ be the number of legal edge paths $v\in\mathcal{P}\Gamma'$ with $\ell_{\Gamma'}(v)=2C'+1$ and $\epsilon_2>0$ be such that $Z'\epsilon_2<1/2$. Pick an integer $D_1>2C+1$ and an integer $D_2>2C'+1$. 
Let us define two neighborhoods of $[\mu_{+}]$ and $[\mu_{-}]$ as follows: 

$U_{+}(\epsilon_1,D_1)$ is the set of all $[\nu]\in\mathbb{P}Curr(F_N)$ such that for all edge-paths $v\in\mathcal{P}\Gamma$ with $\ell_{\Gamma}(v)\le D_1$
\[
 \left|\dfrac{\left<v,\nu\right>}{\|\nu\|_{\Gamma}}-\dfrac{\left<v,\mu_{+}\right>}{\|\mu_{+}\|_{\Gamma}}\right|<\epsilon_1.
\]
Similarly,

$U_{-}(\epsilon_2,D_2)$ is the set of all $[\nu]\in\mathbb{P}Curr(F_N)$ such that for all edge-paths $v\in\mathcal{P}\Gamma'$ with $\ell_{\Gamma'}(v)\le D_2$
\[
 \left|\dfrac{\left<v,\nu\right>}{\|\nu\|_{\Gamma'}}-\dfrac{\left<v,\mu_{-}\right>}{\|\mu_{-}\|_{\Gamma'}}\right|<\epsilon_2.
\]

Since $\mathbb{P}Curr(F_N)$ is a metrizable topological space and $U_{+}(\epsilon,D), U_{-}(\epsilon,D)$ are basic neigborhoods, by picking $\epsilon_1,\epsilon_2$ small enough and $D_1,D_2$ large enough we can assume that $U_{+}(\epsilon_1,D_1)$ and $U_{-}(\epsilon_2,D_2)$ are disjoint. Hence in what follows we fix $D=\max\{D_1,D_2\}$, and $\epsilon=\min\{\epsilon_1,\epsilon_2\}$ so that $U_{-}:=U_{-}(\epsilon,D)$ and $U_{+}:=U_{+}(\epsilon,D)$ are disjoint.   

Let $w$ be a conjugacy class which is represented by $c\in\Gamma$ such that $[\eta_{w}]\in U_{+}$. Note that 
\begin{align*}
1-\gamma(\eta_{w}) &=\frac{\sum_{\substack{\ell_{\Gamma}(v)=2C+1\\v\ is\ legal}}\left<v,\mu_{+}\right>}{\|\mu_{+}\|_{\Gamma}}-\frac{\sum_{\substack{\ell_{\Gamma}(v)=2C+1\\v\ is\ legal}}\left<v,\eta_{w}\right>}{\|\eta_{w}\|_{\Gamma}}\\
&\le Z\epsilon_1\\
&<1/2
\end{align*} because of the way we defined the neighborhood $U_{+}$. Therefore for every rational current $[\eta_{w}]\in U_{+}$ we have $\gamma(\eta_{w})>1/2$. By using similar arguments, we can show that $\gamma'(\eta_{w})>1/2$ for all rational currents $[\eta_{w}]\in U_{-}$. Therefore by Lemma \ref{convgood} there is a power $M=M(\epsilon,D)>0$ such that for all rational currents $[\eta_{w}]\in U_{+}$ we have $\varphi^{M}([\eta_{w}])\in U_{+}'$ where $U_{+}'$ is an open subset of $U_{+}$ such that $\overline{U_{+}'}\subset U_{+}$, and for all rational currents $[\eta_{w}]\in U_{-}$ we have $\varphi^{-M}([\eta_{w}])\in U_{-}'$ where $U_{-}'$ is an open subset of $U_{-}$ such that $\overline{U_{-}'}\subset U_{-}$. ( This is possible since $\mathbb{P}Curr(F_N)$ is metrizable.)  Since rational currents are dense in $\mathbb{P}Curr(F_{N})$, we have 

\begin{enumerate}
\item $\varphi^{M}(U_{+})\subseteq U_{+}$
\item $\varphi^{-M}(U_{-})\subseteq U_{-}$\\
and by Lemma \ref{RMlast} 
\item for every $w\in[F_{N}]$ either $\varphi^{M}([\eta_{w}])\in U_{+}'$ or $\varphi^{-M}([\eta_{w}])\in U_{-}'$\\
and hence
\item for all $[\nu]\in\mathbb{P}Curr(F_{N})$ either $\varphi^{M}([\nu])\in U_{+}$ or  $\varphi^{-M}([\nu])\in U_{-}$.
\end{enumerate}

\begin{claim*} For any neighborhood $U$ of $[\mu_+]$ there exists $n_1\ge 1$ such that for every $n\ge n_1$ we have $\varphi^{nM}(U_{+})\subset U$.
\end{claim*}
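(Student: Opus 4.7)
The plan is to reduce the claim to Lemma \ref{convgood} applied to rational currents, and then upgrade from rational currents to all of $U_+$ via a density-plus-continuity argument.

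Since $\mathbb{P}Curr(F_N)$ is metrizable (hence regular), I would first pick an auxiliary open neighborhood $U^{\circ}$ of $[\mu_+]$ with $\overline{U^{\circ}}\subset U$. This shrinking is essential: a naive density argument would only produce $\varphi^{nM}([\nu])\in\overline{U}$ for $[\nu]\in U_+$, so one has to aim for the smaller $U^{\circ}$ first in order to land inside $U$ itself at the end.

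Next, I would apply Lemma \ref{convgood} with goodness parameter $\delta=1/2$ and the neighborhood $U^{\circ}$, obtaining an integer $M'=M(1/2,U^{\circ})$ such that every conjugacy class $[w]\in F_N$ with $\gamma([w])\ge 1/2$ satisfies $\varphi^n([\eta_w])\in U^{\circ}$ for all $n\ge M'$. The computation carried out earlier in this proof (comparing $\gamma$ against $Z\epsilon_1<1/2$) shows that every rational current $[\eta_w]\in U_+$ has $\gamma(\eta_w)>1/2$, so the hypothesis of Lemma \ref{convgood} is automatically satisfied for these rational currents.

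For the passage to arbitrary (non-rational) currents, let $[\nu]\in U_+$ be given. Since $U_+$ is open and rational currents are dense in $\mathbb{P}Curr(F_N)$, I can choose a sequence of rational currents $[\eta_{w_k}]\in U_+$ with $[\eta_{w_k}]\to[\nu]$. By the previous step, $\varphi^n([\eta_{w_k}])\in U^{\circ}$ for all $n\ge M'$ and all $k$. The continuity of the action of $\varphi^n$ on $\mathbb{P}Curr(F_N)$ then yields
\[
\varphi^n([\nu])=\lim_{k\to\infty}\varphi^n([\eta_{w_k}])\in\overline{U^{\circ}}\subset U.
\]
Finally, choose $n_1\ge 1$ large enough that $n_1 M\ge M'$; then for every $n\ge n_1$ one has $nM\ge M'$, and the previous inclusion applied at the exponent $nM$ gives $\varphi^{nM}(U_+)\subset U$, completing the proof. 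The only subtle point is the shrinking trick with $U^{\circ}$; everything else is an assembly of results already in hand.
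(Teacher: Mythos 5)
Your proof is correct and follows essentially the same route as the paper: shrink $U$ to $U^{\circ}$ with $\overline{U^{\circ}}\subset U$, invoke Lemma \ref{convgood} with $\delta=1/2$ on the rational currents in $U_+$ (which have goodness $>1/2$ by the earlier $Z\epsilon_1<1/2$ computation), then upgrade to all of $U_+$ by density of rational currents and continuity of $\varphi^n$. The only difference is that you spell out the density-plus-continuity step explicitly, which the paper leaves implicit.
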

Given any  neigborhood $U$ of $[\mu_{+}]$, pick a smaller neighborhood $U'$ of $[\mu_{+}]$ such that $\overline{U'}\subset U$. Since every rational current $[\eta_g]\in U_{+}$ has goodness $\ge 1/2$, by Lemma \ref{convgood} there exists an $M_1>0$ such that $\varphi^{m}([\eta_g])\in U'$ for all $[\eta_g]\in U_{+}$ for all $m\geq M_1$. Since rational currents are dense we have $\varphi^{m}(U_{+})\subset \overline{U'}\subset U$ for all $m\ge M_1$. In particular, for above $M=M(\epsilon,D)$, for $n_1\ge1$ satisfying $nM\ge M_1$ we have $\varphi^{nM}(U_{+})\subset U$ for all $n\ge n_1$. Thus, the claim is verified. 

Let U be an arbitrary neighborhood of $[\mu_{+}]$ and let $K_0\subset\mathbb{P}Curr(F_N)\setminus\{[\mu_-]\}$ be a compact set. Since $W=\mathbb{P}Curr(F_N)\smallsetminus K_0$ is an open neigborhood of $[\mu_{-}]$ applying the Claim to $W$ and $\varphi^{-1}$, we see that there exists $n_{0}\ge1$ such that $\varphi^{-n_{0}M}(U_{-})\subset(\mathbb{P}Curr(F_N)\setminus K_0)$ so that $K_0\subset \mathbb{P}Curr(F_N)\setminus\varphi^{-n_0M}(U_-)$. Hence we have $\varphi^{n_0M}(K_0)\subset(\mathbb{P}Curr(F_N)\setminus U_{-})$. Since for each point $[\nu]\in\varphi^{(n_0+1)M}(K_0)$, we have $\varphi^{-M}([\nu])\notin U_{-}$, by $(4)$ it implies that $\varphi^{(n_0+1)M}(K_0)\subset U_{+}$. Therefore, for every $m\ge n_0+n_1+1$ we have
\[
\varphi^{mM}(K_0)\subset U.
\]
A symmetric argument shows that for any compact subset $K_1\subset\mathbb{P}Curr(F_N)\setminus\{[\mu_+]\}$ and for any open neighborhood $V$ of $[\mu_-]$ there exists $m'\ge 1$ such that for every $m\ge m'$ we have
$\varphi^{-mM}(K_0)\subset V$.

Thus the conclusion of Theorem \ref{mainthm} holds for $\varphi^{M}$ hence by Proposition \ref{power} it also holds for $\varphi$. 
\end{proof}

\subsection{Non-atoroidal iwips}
In order to give a complete picture of dynamics of iwips on $\mathbb{P}Curr(F_N)$ we state the analogous theorem for non-atoroidal iwips. We first recall that a theorem of Bestvina-Handel \cite{BH92} states that every non-atoroidal iwip $\varphi\in Out(F_N)$ is induced by a pseudo-Anosov homeomorphism on a compact surface $S$ with one bounday component such that $\pi_1(S)\cong F_N$. 
\begin{prop} Let $\varphi\in Out(F_N)$ be a non-atoroidal iwip. The action of $\varphi$ on $\mathbb{P}Curr(F_N)$ has exactly three fixed points: $[\mu_{+}]$ the stable current, $[\mu_{-}]$ the unstable current and $[\mu_\beta]$ the current corresponding to the boundary curve. 
\end{prop}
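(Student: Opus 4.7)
My plan is to use the surface-theoretic identification provided by the Bestvina-Handel geometrization theorem, together with the uniqueness results cited earlier as Theorem \ref{geometriciwips} from the author's paper \cite{U}.

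First I would fix the pseudo-Anosov model. Since $\varphi$ is a non-atoroidal iwip, Bestvina-Handel (as recalled in Section \ref{iwipdefn}) gives a compact hyperbolic surface $S$ with a single boundary component and an identification $\pi_1(S)\cong F_N$, such that $\varphi$ is induced by a pseudo-Anosov homeomorphism $h\colon S\to S$. Let $\beta\in F_N$ represent the conjugacy class of the boundary loop. Because $S$ has exactly one boundary component, $h$ necessarily preserves $\partial S$ setwise, so $\varphi([\beta])=[\beta]$ in the set of conjugacy classes of $F_N$, which yields $\varphi\eta_\beta=\eta_\beta$, and hence $[\mu_\beta]:=[\eta_\beta]$ is a genuine fixed point of the $\varphi$-action on $\mathbb{P}Curr(F_N)$.

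Next I would verify that the stable and unstable currents are also fixed. This is immediate from Remark \ref{unstable}, which gives $\varphi\mu_+=\lambda\mu_+$ for the Perron-Frobenius eigenvalue $\lambda$ of the train-track map $f$, and analogously $\varphi^{-1}\mu_-=\lambda'\mu_-$ applied to $\varphi^{-1}$. Thus $[\mu_+]$ and $[\mu_-]$ are projective fixed points. Moreover they are distinct from $[\mu_\beta]$: unlike $\eta_\beta$, the currents $\mu_\pm$ are non-rational (their supports are the algebraic laminations dual to the trees on which $\varphi$ acts by $\lambda^{\pm 1}$), so they cannot be scalar multiples of the rational current $\eta_\beta$. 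And $[\mu_+]\neq[\mu_-]$ because $\varphi$ scales them by different factors $\lambda$ and $1/\lambda$ with $\lambda>1$.

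Finally, to show these are the \emph{only} fixed points, I would invoke Theorem \ref{geometriciwips} (from \cite{U}), which gives a precise ``generalized north-south'' description of the $\varphi$-dynamics on $\mathbb{P}Curr(F_N)$ in the non-atoroidal case: every projective current in $\mathbb{P}Curr(F_N)\smallsetminus\{[\mu_-],[\mu_\beta]\}$ has forward orbit converging to $[\mu_+]$ (with the symmetric statement for $\varphi^{-1}$). Any $\varphi$-fixed projective current $[\nu]$ then has constant forward orbit, so $[\nu]$ must already lie in the excluded set $\{[\mu_+],[\mu_-],[\mu_\beta]\}$.

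The only real subtlety in this argument is making sure the boundary current genuinely gives a \emph{third} fixed point distinct from $[\mu_\pm]$, and that the dynamical statement of Theorem \ref{geometriciwips} has been set up so as to exclude exactly these three currents from the basin of attraction of $[\mu_+]$; both are standard consequences of the surface setup but deserve to be pointed out explicitly.
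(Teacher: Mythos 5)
The paper itself does not prove this proposition. It merely states it (along with Theorem \ref{geometriciwips}) and refers the reader to \cite{U} for details, so there is no in-paper argument to compare against; what follows is a review of the proposal on its own terms.

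Your overall strategy is sound: realize $\varphi$ by a pseudo-Anosov $h$ on a once-punctured surface $S$ via Bestvina--Handel, observe that $[\eta_\beta]$ is fixed because $h$ preserves $\partial S$, observe that $[\mu_\pm]$ are projectively fixed by Remark \ref{unstable} and its analogue for $\varphi^{-1}$, check that the three are distinct, and then use the dynamical description of Theorem \ref{geometriciwips} to rule out further fixed points. The distinctness part is fine.

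There is, however, a genuine gap in the uniqueness step, because you misstate the content of Theorem \ref{geometriciwips}. That theorem does \emph{not} say that every projective current outside $\{[\mu_-],[\mu_\beta]\}$ has forward orbit converging to $[\mu_+]$. It says that every compact subset of $\mathbb{P}Curr(F_N)\setminus\Delta_-$ is uniformly attracted to $[\mu_+]$, where $\Delta_-=\{[a\mu_-+b\mu_\beta]\mid a,b\ge0,\ a+b>0\}$ is the entire projective \emph{arc} spanned by $\mu_-$ and $\mu_\beta$, not just its two endpoints (and similarly $\Delta_+$ for $\varphi^{-1}$). Consequently your argument only shows that a fixed point lying outside $\Delta_-$ must equal $[\mu_+]$, and a fixed point outside $\Delta_+$ must equal $[\mu_-]$; it does not rule out a fixed point of the form $[a\mu_-+b\mu_\beta]$ with $a,b>0$ (or the analogue with $\mu_+$). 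To close the gap one must treat the arcs directly. Since $\varphi\mu_\beta=\mu_\beta$ and $\varphi\mu_-=\lambda_-^{-1}\mu_-$ with $\lambda_->1$, the action of $\varphi$ on $\Delta_-$ is
\[
[a\mu_-+b\mu_\beta]\ \longmapsto\ [\lambda_-^{-1}a\,\mu_-+b\,\mu_\beta],
\]
which, because $\mu_-$ and $\mu_\beta$ are linearly independent, has exactly the two fixed points $[\mu_-]$ (when $b=0$) and $[\mu_\beta]$ (when $a=0$), and no fixed point in the open arc. The symmetric computation on $\Delta_+$ (using $\varphi\mu_+=\lambda_+\mu_+$) rules out interior fixed points there as well. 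Alternatively, one can observe that a fixed point distinct from $[\mu_\pm]$ must lie in $\Delta_-\cap\Delta_+=\{[\mu_\beta]\}$, but this requires first establishing linear independence of $\mu_+,\mu_-,\mu_\beta$, which is easy (their supports are pairwise distinct laminations) but should be said. With one of these additions, the argument is complete.
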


Moreover, we described the full picture in terms of the dynamics of the action of $\varphi$ on $\mathbb{P}Curr(F_N)$. Let us define \[
\Delta_{-}=\{[a\mu_{-}+b\mu_{\beta}]\in\mathbb{P}Curr(F_N)\mid a,b\ge0, a+b>0\}
\]
and similarly, 
\[
\Delta_{+}=\{[a'\mu_{+}+b'\mu_{\beta}]\in\mathbb{P}Curr(F_N)\mid a',b'\ge0,a'+b'>0\}
\]

\begin{thm}\cite{U}\label{geometriciwips} Let $\varphi\in Out(F_N)$ be a non-atoroidal iwip, and $K$ be a compact set in $\mathbb{P}Curr(F_N)\setminus \Delta_{-}$. Then, given an open neighborhood U of $[\mu_{+}]$ there exists an integer $M>0$, such that for all $n\ge M$ we have  $\varphi^{n}(K)\subset U$.  Similarly, given a compact set $K'\subset\mathbb{P}Curr(F_N)\setminus\Delta_{+}$ and an open neighborhood $V$ of $[\mu_{-}]$ there exists a integer $M'>0$ such that for all $n\ge M'$ we have $\varphi^{-n}(K')\subset V$. 
\end{thm}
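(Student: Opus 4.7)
The plan is to exploit the Bestvina--Handel structure theorem, which realizes $\varphi$ as induced by a pseudo-Anosov $\Phi\colon S\to S$ on a compact surface $S$ with one boundary component and $\pi_{1}(S)\cong F_{N}$. In this geometric picture the three projective fixed currents acquire transparent meaning: $[\mu_{+}]$ and $[\mu_{-}]$ are the stable and unstable measured foliations of $\Phi$, while $[\mu_{\beta}]$ is the counting current of the boundary curve. With $\lambda>1$ the dilatation of $\Phi$, one has $\varphi\mu_{+}=\lambda\mu_{+}$, $\varphi\mu_{-}=\lambda^{-1}\mu_{-}$, and $\varphi\mu_{\beta}=\mu_{\beta}$, so the simplices $\Delta_{+}$ and $\Delta_{-}$ are $\varphi$-invariant.

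The main tool is Bonahon's geometric intersection form $i\colon\mathrm{Curr}(F_{N})\times\mathrm{Curr}(F_{N})\to\mathbb{R}_{\ge 0}$, which is continuous, bi-homogeneous, and mapping-class invariant in the sense that $i(\varphi\nu_{1},\varphi\nu_{2})=i(\nu_{1},\nu_{2})$. Since $i(\mu_{-},\mu_{-})=0$ (zero self-intersection of the unstable foliation) and $i(\mu_{\beta},\mu_{-})=0$ (the unstable lamination is disjoint from $\partial S$), and because $\mu_{-}$ is uniquely ergodic as a measured foliation, a standard positivity argument identifies
\[
\Delta_{-}=\bigl\{[\nu]\in\mathbb{P}\mathrm{Curr}(F_{N})\colon i(\nu,\mu_{-})=0\bigr\},
\]
and symmetrically for $\Delta_{+}$. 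The dynamics is then driven by the identity $i(\varphi^{n}\nu,\mu_{-})=i(\nu,\varphi^{-n}\mu_{-})=\lambda^{n}\,i(\nu,\mu_{-})$, which blows up exponentially whenever $[\nu]\notin\Delta_{-}$, while the weight $\|\varphi^{n}\nu\|_{\Gamma}$ grows no faster than $\lambda^{n}$.

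Given a compact $K\subset\mathbb{P}\mathrm{Curr}(F_{N})\setminus\Delta_{-}$, continuity of $i$ and compactness provide a uniform lower bound $i([\nu],[\mu_{-}])\ge\epsilon_{0}>0$ on $K$, with a weight normalization. I would argue by contradiction: if the conclusion fails for some neighborhood $U$ of $[\mu_{+}]$, extract $[\nu_{k}]\in K$ and $n_{k}\to\infty$ with $\varphi^{n_{k}}[\nu_{k}]\notin U$, and pass to subsequential limits $[\nu_{k}]\to[\nu_{0}]\in K$ and $\varphi^{n_{k}}[\nu_{k}]\to[\nu_{\infty}]\ne[\mu_{+}]$. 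The intersection identity and the uniform lower bound then force $i([\nu_{\infty}],[\mu_{-}])>0$, so $[\nu_{\infty}]\notin\Delta_{-}$.

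The hard part is upgrading $i([\nu_{\infty}],[\mu_{-}])>0$ to the stronger conclusion $[\nu_{\infty}]=[\mu_{+}]$: a priori $[\nu_{\infty}]$ could be any current with nontrivial transverse intersection with $\mu_{-}$, and one must rule out the presence of a $\mu_{\beta}$-component in the limit. The cleanest route is to combine the intersection estimate with a train-track neighborhood of $\mu_{+}$ on $S$ and a \emph{goodness}-type function adapted from Section \ref{traintracks}, in which the unique INP coming from the boundary curve $\mu_{\beta}$ is isolated and its contribution absorbed precisely by the hypothesis $[\nu]\notin\Delta_{-}$; equivalently, one uses the uniqueness of $[\mu_{+}]$ as the projective current maximizing the ratio $i(\cdot,[\mu_{-}])/\|\cdot\|_{\Gamma}$ among currents not in $\Delta_{-}$. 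The backward direction follows by symmetry applied to $\varphi^{-1}$, which is again a non-atoroidal iwip with the roles of $[\mu_{+}]$ and $[\mu_{-}]$ (and hence of $\Delta_{+}$ and $\Delta_{-}$) interchanged.
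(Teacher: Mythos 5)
The paper does not prove Theorem~\ref{geometriciwips}: it is imported verbatim from the companion paper \cite{U} (the text right after the statement says ``We refer reader to \cite{U} for further details''), so there is no in-paper proof to compare against. I will therefore assess your proposal on its own merits.

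Your overall shape is reasonable --- realize $\varphi$ via Bestvina--Handel as a pseudo-Anosov $\Phi$ on a once-bounded surface, use Bonahon's intersection form, and run a compactness argument against the conserved pairing with $\mu_{-}$. The eigenvalue computation $i(\varphi^{n}\nu,\mu_{-}) = i(\nu,\varphi^{-n}\mu_{-}) = \lambda^{n} i(\nu,\mu_{-})$ is correct (for non-atoroidal iwips the expansion factors of $\varphi$ and $\varphi^{-1}$ both equal the dilatation $\lambda$ of $\Phi$), and the identification $\Delta_{\pm} = \{[\nu] : i(\nu,\mu_{\pm}) = 0\}$ is plausible via unique ergodicity of the stable/unstable laminations plus the fact that $\partial S$ does not cross them. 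But the gap you yourself flag is genuine and is not closed by the hints you offer.

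Concretely, the contradiction argument produces a subsequential limit $[\nu_{\infty}] \ne [\mu_{+}]$ with $i(\nu_{\infty},\mu_{+}) = 0$ and $i(\nu_{\infty},\mu_{-}) > 0$; the first puts $\nu_{\infty} = a\mu_{+} + b\mu_{\beta}$ in $\Delta_{+}$ and the second forces $a > 0$, but nothing rules out $b > 0$. The obvious next test is to pair against $\mu_{\beta}$, but this yields nothing: $i(\mu_{+},\mu_{\beta}) = i(\mu_{\beta},\mu_{\beta}) = 0$ because the attracting lamination is disjoint from $\partial S$ and $\partial S$ is simple, so $i(\cdot,\mu_{\beta})$ vanishes identically on $\Delta_{+}$ and cannot separate $[\mu_{+}]$ from $[\mu_{\beta}]$. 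Your proposed alternative, that $[\mu_{+}]$ uniquely maximizes the ratio $i(\cdot,\mu_{-})/\|\cdot\|_{\Gamma}$ outside $\Delta_{-}$, is asserted without justification; $\|\cdot\|_{\Gamma}$ depends on an arbitrary choice of marked graph and I see no reason this maximum should land exactly on $\mu_{+}$.

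To kill the $\mu_{\beta}$-component you need a continuous functional on $\mathbb{P}Curr(F_{N})$ that separates $[\mu_{+}]$ from the rest of $\Delta_{+}$ and that is driven to its extremum by iterating $\varphi$ on $K$. The Bonahon pairing alone does not supply one, since $\Delta_{+}$ lies entirely in the kernel of $i(\cdot,\mu_{+})$. The generalized goodness $\gamma$ from Section~\ref{traintracks} does the job in principle ($\gamma(\mu_{+}) = 1$ while $\gamma(a\mu_{+} + b\mu_{\beta}) < 1$ for $b > 0$, because the boundary circuit carries the INP tip), but establishing $\gamma(\varphi^{n}[\nu]) \to 1$ uniformly on $K$ requires reworking Lemmas~\ref{biggood}--\ref{dicho} in the non-atoroidal setting, where the clause ``there are no consecutive INP's as that would imply a periodic conjugacy class'' fails precisely for the boundary class. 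So your ``cleanest route'' is really a reprise of the train-track argument, and the purely intersection-theoretic proof you set out to give is, as written, incomplete.
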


We refer reader to \cite{U} for further details. 

\section{Alternative Proof}\label{alternative}

The main Theorem of this section is the following: 

\begin{thm}\label{pointwise} Let $\varphi\in Out(F_N)$ be a hyperbolic iwip. Suppose that $[\mu]\in\mathbb{P}Curr(F_N)\smallsetminus\big\{[\mu+],[\mu_-]\big\}$. Then,
\[
\lim_{n\to\infty}\varphi^{n}([\mu])=[\mu_{+}]\ and\ \lim_{n\to\infty}\varphi^{-n}([\mu])=[\mu_{-}].
\]
\end{thm}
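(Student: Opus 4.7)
The plan is to combine the Kapovich–Lustig intersection form (Proposition \ref{intersectionform}) with Perron–Frobenius dynamics on both sides, and to pin down limit currents via a uniqueness claim ultimately resting on Theorem \ref{ergodicprimitive}. For a hyperbolic iwip $\varphi$, Levitt–Lustig \cite{LL} provides attracting and repelling fixed trees $T_+,T_- \in \overline{cv}_N$ for the right action of $\varphi$ on $\overline{CV}_N$, satisfying $T_+\varphi = \lambda_+ T_+$ and $T_-\varphi = \lambda_-^{-1} T_-$, where $\lambda_\pm > 1$ are the Perron–Frobenius stretch factors of train-track representatives of $\varphi^{\pm 1}$. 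The stable and unstable currents $\mu_\pm$ are eigencurrents for $\varphi$, namely $\varphi\mu_\pm = \lambda_\pm^{\pm 1}\mu_\pm$ (cf.\ Remark \ref{unstable}).

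Next I would exploit the $Out(F_N)$-equivariance of the intersection form, Proposition \ref{intersectionform}(3), to convert these eigenvalue equations into the scaling rules
\[
\langle T_+, \varphi^n\mu\rangle = \lambda_+^{n}\,\langle T_+, \mu\rangle, \qquad \langle T_-, \varphi^n\mu\rangle = \lambda_-^{-n}\,\langle T_-, \mu\rangle
\]
valid for every $\mu\in Curr(F_N)$ and every $n\in\mathbb{Z}$. Plugging $\mu=\mu_\mp$ into these and comparing with the eigenvalue equations for $\mu_\pm$ forces the cross-annihilation $\langle T_+, \mu_-\rangle = 0 = \langle T_-, \mu_+\rangle$, because $\lambda_+ \neq \lambda_-^{-1}$. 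The key structural input I need next is the uniqueness statement: $[\mu_+]$ is the unique projective class in $\mathbb{P}Curr(F_N)$ with $\langle T_-, \cdot\rangle = 0$, and symmetrically $[\mu_-]$ is the unique class with $\langle T_+, \cdot\rangle = 0$. I would extract this from the Kapovich–Lustig identification of zero-intersection currents as those supported in (the diagonal closure of) the dual algebraic lamination $L(T_\pm)$ of Example \ref{duallamination} (see \cite{KL3, KL6}), together with Theorem \ref{ergodicprimitive} applied to the primitive substitution $\zeta$ induced on $E\Gamma$ by a train-track representative $f:\Gamma\to\Gamma$. Via the correspondence $\tau:Curr(F_N)\to\mathcal{M}'(\Omega(\Gamma))$ of Section \ref{shifts}, a current supported in $L(T_-)$ transfers to a flip-invariant, shift-invariant measure carried by the substitutive subshift $X_\zeta$; unique ergodicity pins this measure down up to scaling, and $\mu_+$ realises it.

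Granted these inputs, the theorem becomes a short subsequence argument. Fix $[\mu]\ne[\mu_\pm]$; by uniqueness, both $\langle T_+, \mu\rangle$ and $\langle T_-, \mu\rangle$ are strictly positive. Since $\mathbb{P}Curr(F_N)$ is compact, every subsequence of $\{\varphi^n[\mu]\}$ has a convergent sub-subsequence $\varphi^{n_k}[\mu]\to[\nu]$; I would choose weight-normalized representatives $\mu_k = c_k\,\varphi^{n_k}\mu$ with $w_\Gamma(\mu_k)=1$ converging to a representative $\nu$ of $[\nu]$. Continuity of the intersection form together with the scaling rules gives
\[
\langle T_+, \nu\rangle = \lim_{k\to\infty} c_k\lambda_+^{n_k}\langle T_+,\mu\rangle, \qquad \langle T_-, \nu\rangle = \lim_{k\to\infty} c_k\lambda_-^{-n_k}\langle T_-,\mu\rangle.
\]
Writing $c_k\lambda_-^{-n_k} = (c_k\lambda_+^{n_k})(\lambda_+\lambda_-)^{-n_k}$ and noting that the alternative $\langle T_+,\nu\rangle = 0$ would force $[\nu]=[\mu_-]$ by uniqueness (contradicting $\langle T_-, \mu_-\rangle > 0$), one reads off $\langle T_-, \nu\rangle = 0$ with $\langle T_+, \nu\rangle > 0$; uniqueness then yields $[\nu] = [\mu_+]$. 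Since every subsequential limit equals $[\mu_+]$, forward convergence follows, and backward convergence $\varphi^{-n}[\mu]\to[\mu_-]$ is obtained by applying the same argument to $\varphi^{-1}$.

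The hard part will be the uniqueness step: verifying cleanly that the condition $\mathrm{supp}(\mu)\subseteq L(T_-)$ transfers under $\tau$ to a symmetric, shift-invariant measure concentrated on the substitutive subshift $X_\zeta$, so that Theorem \ref{ergodicprimitive} can be applied to conclude uniqueness up to scaling. Everything else in the argument is a routine application of continuity of the intersection form and a subsequence extraction.
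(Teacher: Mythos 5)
Your architecture is the same as the paper's second proof (Section~\ref{alternative}): rescale a subsequential limit, compare $\langle T_+,\cdot\rangle$ and $\langle T_-,\cdot\rangle$ via the eigenvalue relations and equivariance of the intersection form, conclude $\langle T_-,\nu\rangle=0$, and invoke a uniqueness statement to identify $[\nu]=[\mu_+]$; that uniqueness statement is precisely Lemma~\ref{erg1}. Your cross-annihilation observation $\langle T_+,\mu_-\rangle=0=\langle T_-,\mu_+\rangle$, read off from $\lambda_+\neq\lambda_-^{-1}$ together with equivariance, is a clean shortcut the paper does not spell out, but it only exhibits a zero of the linear functional $\langle T_-,\cdot\rangle$, not the uniqueness you actually need. (Also, the parenthetical in which you rule out $\langle T_+,\nu\rangle=0$ is both unnecessary and, as written, circular --- it appeals to $\langle T_-,\nu\rangle=0$ before you have established it. All that is needed is that $c_k\lambda_+^{n_k}$ is bounded, since then $c_k\lambda_-^{-n_k}=(c_k\lambda_+^{n_k})(\lambda_+\lambda_-)^{-n_k}\to 0$ regardless of whether the bound is zero.)

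The genuine gap is in the uniqueness step, which you rightly flag as ``the hard part'' but misdescribe as a transfer problem under $\tau$. By \cite{KL3}, $\langle T_-,\mu\rangle=0$ gives $\mathrm{supp}(\mu)\subseteq L(T_-)$, and by \cite{KL6} one has $L(T_-)=\overline{diag}\bigl(L_{BFH}(\varphi)\bigr)$, which is \emph{strictly} larger than $L_{BFH}(\varphi)$: the difference consists of finitely many $F_N$-orbits of singular leaves, each a concatenation of two eigenrays across the tip of an INP or across an unused legal turn. Such singular leaves do not lie in the substitutive subshift $X_\zeta$, so $\tau$ does not immediately carry a current supported in $L(T_-)$ to a measure on $X_\zeta$. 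One must first show that no nonzero current can charge the singular leaves, i.e.\ that $\mathrm{supp}(\mu)\subseteq L_{BFH}(\varphi)$. This is the content of the Claim inside Lemma~\ref{erg1}, which crucially uses Proposition~\ref{freq}: if the singularity word $e_2^{-1}e_1$ satisfied $\langle e_2^{-1}e_1,\mu\rangle_\alpha>0$, then so would long products $(e_2^{-1}e_1)^{\pm1}z_1(e_2^{-1}e_1)^{\pm1}\cdots$, forcing leaves of $\mathrm{supp}(\mu)$ with arbitrarily many singularities, whereas every leaf of $L(T_-)$ carries at most one. That combinatorial input is absent from your sketch. A smaller omission: $f$ need not be a primitive substitution on $E\Gamma$ directly; in the ``Type~1'' case of Lemma~\ref{edgelimit} it splits into two primitive substitutions $f_\pm$ on $E_\pm$, and the transfer to symbolic dynamics and the application of Theorem~\ref{ergodicprimitive} must be run separately in that case (cf.\ Proposition~\ref{ergodicbfh}).
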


By a result of Kapovich-Lustig \cite[Lemma 4.7]{KL5}, Theorem \ref{pointwise} about pointwise north-south dynamics implies Theorem \ref{mainthm} from the introduction about uniform north-south dynamics.   

\begin{notation}\label{notation}  Let $\varphi\in Out(F_N)$ be a hyperbolic iwip. Let us denote the stable and the unstable currents corresponding to the action of $\varphi$ on $\mathbb{P}Curr(F_N)$ by $[\mu_{+}]$ and $[\mu_-]$ respectively, as defined in Lemma~\ref{stable}. Let $T_{-}$ and $T_{+}$ denote representatives in $\overline{cv}_N$ of repelling and attracting trees for the right action of $\varphi$ on $\overline{CV}_N$, where $T_{+}\varphi=\lambda_{+}T_{+}$ and $T_{-}\varphi^{-1}=\lambda_{-}T_{-}$ for some $\lambda_{-},\lambda_{+}>1$, \cite{LL}.
\end{notation}
\begin{rem} Note that by the proof of  Proposition \ref{power}, if $\varphi$ is a hyperbolic iwip, $k\ge 1$ is an integer and if the conclusion of Theorem~\ref{pointwise} holds for $\varphi^k$, then Theorem~\ref{pointwise} holds for $\varphi$ as well. Therefore, for the remainder of this section, we pass to appropriate powers and make the same assumptions as in Convention \ref{conv}. 
\end{rem}

Let $f:\Gamma\to\Gamma$ be a train-track map representing a hypebolic iwip $\varphi\in Out(F_N)$. Then, the \emph{Bestvina-Feighn-Handel lamination} $L_{BFH}(\varphi)$ is the lamination generated by the family of paths $f^{k}(e)$, where $e\in E\Gamma$, and $k\ge0$, \cite{BFH97}.

\begin{prop}\label{ergodicbfh} Let $f$ be a train-track map representing the hyperbolic iwip $\varphi\in Out(F_N)$. Then, the Bestvina-Feighn-Handel lamination $L_{BFH}(\varphi)$ is uniquely ergodic. In other words, there exists a unique geodesic current $[\mu]\in\mathbb{P}Curr(F_N)$ such that $supp(\mu)\subset L_{BFH}(\varphi)$, namely $[\mu]=[\mu_{+}]$.  
\end{prop}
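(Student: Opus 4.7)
The plan is to translate Proposition~\ref{ergodicbfh} into symbolic dynamics via the affine homeomorphism $\tau:Curr(F_N)\to\mathcal{M}'(\Omega(\Gamma))$ of Section~\ref{shifts} and then invoke Theorem~\ref{ergodicprimitive} directly. First I would unpack the hypothesis $supp(\mu)\subset L_{BFH}(\varphi)$ combinatorially. By the cylinder-set characterization of support recalled in the second example of Section~\ref{laminations}, any reduced finite path $v$ with $\langle v,\mu\rangle_\alpha>0$ must appear as a subpath of the geodesic realization of some leaf of $supp(\mu)$, hence of some leaf of $L_{BFH}(\varphi)$. Since $L_{BFH}(\varphi)$ is the lamination generated by $\{f^k(e):e\in E\Gamma,\ k\ge 0\}$, such a $v$ must be a subpath of some $f^k(e)$. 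Contrapositively, if $v$ does not occur as a subpath of any $f^k(e)$, then $\tau\mu(Cyl_\Omega(v))=\langle v,\mu\rangle_\alpha=0$.

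Next I would identify the relevant substitutive subshift. Treating each oriented edge in $E\Gamma$ (that is, $e$ and $e^{-1}$ as distinct letters) as an element of the alphabet, the train-track map $f$ \emph{is} a substitution $\zeta$ whose $\zeta$-matrix is the transition matrix $M(f)$. By Convention~\ref{conv} we may assume $M(f)>0$, so $\zeta$ is primitive (Case~2 of Lemma~\ref{edgelimit}); in Case~1, $E\Gamma$ decomposes as $E_+\sqcup E_-$ with $E_-=E_+^{-1}$, and $f$ restricts to two primitive substitutions $f_+,f_-$ interchanged by the flip map $\alpha_f$. The previous paragraph then shows that $\tau\mu$ is concentrated on the substitutive subshift $X_\zeta\subset\Omega(\Gamma)$ in Case~2 (respectively on $X_{f_+}\sqcup X_{f_-}$ in Case~1), since by its very definition $X_\zeta$ is the set of semi-infinite reduced edge-paths whose finite subpaths all appear inside some $f^k(e)$.

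Now I would apply Theorem~\ref{ergodicprimitive}: $(X_\zeta,T_\Gamma)$ is uniquely ergodic in Case~2, as is each of $(X_{f_\pm},T_\Gamma)$ in Case~1. Since $\tau\mu$ is a nonzero, finite, positive, $T_\Gamma$-invariant Borel measure supported on $X_\zeta$, unique ergodicity forces it to be proportional to the canonical invariant measure. In Case~1 the extra symmetry constraint $\tau\mu\in\mathcal{M}'(\Omega(\Gamma))$ further pins down the relative weight of the two pieces, because $\alpha_f$ carries the canonical measure on $X_{f_+}$ to that on $X_{f_-}$. The stable current $\mu_+$ produced in Lemma~\ref{stable} from normalized edge-counts of $f^n(e)$ satisfies $supp(\mu_+)\subset L_{BFH}(\varphi)$ by its very construction, so $\tau\mu_+$ is a measure of the same type on $X_\zeta$. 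Consequently $\tau\mu$ and $\tau\mu_+$ are positive scalar multiples of one another, and applying $\tau^{-1}$ yields $[\mu]=[\mu_+]$.

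The step I expect to be most delicate is the orientation bookkeeping in Case~1, where one must match the two components of the symmetric measure to the two primitive subshifts interchanged by $\alpha_f$ and verify that flip-symmetry forces equal weight on both. A minor preliminary point, essentially by definition of the lamination generated by a family of paths, is to check that the semi-infinite halves of leaves of $L_{BFH}(\varphi)$ coincide with $X_\zeta$; this is needed to justify that $\tau\mu_+$ lives on the same subshift as $\tau\mu$. Once these points are pinned down the conclusion is immediate: unique ergodicity of the substitutive subshift $(X_\zeta,T_\Gamma)$ is exactly unique ergodicity of $L_{BFH}(\varphi)$ as a support-class of projectivized currents.
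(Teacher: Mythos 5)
Your proposal matches the paper's proof in all essentials: both translate to symbolic dynamics via the affine homeomorphism $\tau$, split into the two cases of Lemma~\ref{edgelimit} (primitive substitution on all of $E\Gamma$ versus the orientation-split $E_+\sqcup E_-$), invoke Theorem~\ref{ergodicprimitive} for unique ergodicity of the substitutive subshift, and finish by noting $supp(\mu_+)\subset L_{BFH}(\varphi)$ from Lemma~\ref{stable}. The only cosmetic difference is in Case~1, where the paper sidesteps the flip-symmetry bookkeeping you flag as delicate by instead defining a direct affine homeomorphism $\sigma$ from $T_\Gamma$-invariant measures on $\Omega_+(\Gamma)$ supported on $X_{f_+}$ to currents supported on $L_{BFH}(\varphi)$ (setting $\langle v,\mu\rangle = \nu(Cyl(v))$ or $\nu(Cyl(v^{-1}))$ according to the sign of $v$, and $0$ otherwise), which packages the symmetry constraint automatically.
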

\begin{proof} Note that we are still working with a power of the outer automorphism $\varphi$ which satisfies \ref{conv}. There are two cases to consider in terms of the type of the train track map $f$ as in Lemma \ref{edgelimit}. First assume that $f$ is of \emph{Type 2}. Define $\mathcal{L}_f$ to be the set of all finite edge-paths $v$ in $\Gamma$ such that there exists an edge $e\in\Gamma$ and an integer $n\ge0$ such that $v$ is a subword of $f^{n}(e)$. Let $X_f$  be the set of all semi-infinite reduced edge paths $\gamma$ in $\Gamma$ such that every finite subword of $\gamma$ is in $\mathcal{L}_f$. Note that the map $\tau:Curr(F_N)\to\mathcal{M}'(\Omega(\Gamma))$ as defined in Section \ref{shifts}, gives an affine homeomorphism from the set \[
A=\{\mu\in Curr(F_N)\ | supp(\mu)\subset L_{BFH}(\varphi)\}
\]
to the set \[
B=\{\nu\in\mathcal{M}'(\Omega(\Gamma))\ | supp(\nu)\subset X_f\}.
\]
Since $X_f$ is uniquely ergodic by Theorem \ref{ergodicprimitive}, this implies that $L_{BFH}(\varphi)$ is uniquely ergodic. Now, let the map $f$ be of \emph{Type 1}. Partition the edges of $\Gamma$ as in Lemma \ref{edgelimit}, $E\Gamma=E_{+}\cup E_{-}$, and let $f_{+}:E_{+}\to E_{+}$ and $f_{-}:E_{-}\to E_{-}$ be the corresponding primitive substitutions. Define $L_{f_{+}}$ and $X_{f_{+}}$ similarly. Let $\Omega_{+}(\Gamma)$ be the set of all semi-infinite reduced edge-paths in $\Gamma$ where each edge is labeled by an edge in $E_{+}$. Let $\mathcal{M}(\Omega_{+}(\Gamma))$ be the set of positive Borel measures on  $\Omega_{+}(\Gamma)$ that are shift invariant. Then, the map
\[
\sigma:\{\nu\in\mathcal{M}(\Omega_{+}(\Gamma)) |supp(\nu)\subset X_{f_{+}}\}\to\{\mu\in Curr(F_N)\ |supp(\mu)\subset L_{BFH}(\varphi)\},
\] 
which is defined by $\left<v,\mu\right>_\Gamma=\nu(Cyl(v))$ for a positive edge path $v$,  $\left<v,\mu\right>=\nu(Cyl(v^{-1}))$ for a negative edge path $v$, and $\left<v,\mu\right>=0$ otherwise, is an affine homeomorphism. Since $X_{f_{+}}$ is uniquely ergodic, so is $L_{BFH}(\varphi)$. Note that because of the way $\mu_{+}$ is defined, see \ref{stable}, $supp(\mu_{+})\subset L_{BFH}(\varphi)$. Hence, $[\mu_{+}]$ is the only current whose support is contained in $L_{BFH}(\varphi)$. 
\end{proof}

\begin{prop}\label{freq}  Let $\mu\in Curr(F_N)$ be a geodesic current, and $\alpha:R_N\to\Gamma$ be a marking. 
\begin{enumerate}
\item If $\left<v,\mu\right>_\alpha>0$, then there exist $\epsilon,\delta\in\{-1,1\}$ and a finite path $z$ such that $\left<v^{\epsilon}zv^{\delta},\mu\right>>0$. 
\item If $\left<v,\mu\right>_\alpha>0$, then for every $r\ge2$ there exists a path $v_r=v^{\epsilon_1}z_1v^{\epsilon_2}\dotsc z_{r-1}v^{\epsilon_r}$, where $\epsilon_i\in\{-1,1\}$ such that $\left<v_r,\mu\right>_\alpha>0$. 

\end{enumerate}
\end{prop}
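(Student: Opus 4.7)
The plan is to translate the statement into the one-sided shift model of Section~\ref{shifts} and then apply the Poincar\'e recurrence theorem. Via the affine homeomorphism $\tau:Curr(F_N)\to\mathcal{M}'(\Omega(\Gamma))$, the measure $\tau\mu$ is a finite, positive, $T_\Gamma$-invariant Borel measure on the compact space $\Omega(\Gamma)$, and the hypothesis $\left<v,\mu\right>_\alpha>0$ translates to $\tau\mu(Cyl_\Omega(v))>0$. I will in fact obtain both parts with all exponents equal to $+1$, so no inversion of $v$ is needed.

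First, I apply the Poincar\'e recurrence theorem to the set $A=Cyl_\Omega(v)$: for $\tau\mu$-a.e.\ $\gamma\in A$, the orbit $\{T_\Gamma^n\gamma\}_{n\ge1}$ returns to $A$ for infinitely many $n$. Fix such a $\gamma$ with return times $0=n_0<n_1<n_2<\dotsc$. By thinning if necessary, I may assume $n_{i+1}-n_i\ge|v|$ for every $i$, so that the occurrences of $v$ inside $\gamma$ beginning at positions $n_0,n_1,n_2,\dotsc$ are pairwise disjoint.

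For part (1), I inspect the prefix of $\gamma$ of length $n_1+|v|$: it is a reduced edge-path of the form $vzv$, where $z$ is the (possibly empty) edge-path occupying the positions between the first two occurrences of $v$. Since $\gamma\in Cyl_\Omega(vzv)$, this gives $\left<vzv,\mu\right>_\alpha=\tau\mu(Cyl_\Omega(vzv))>0$, which proves (1) with $\epsilon=\delta=+1$. For part (2), given $r\ge2$, I inspect the prefix of $\gamma$ of length $n_{r-1}+|v|$; by the spacing of the return times it is a reduced edge-path of the form $v_r=vz_1vz_2\dotsc z_{r-1}v$, so the same argument yields $\left<v_r,\mu\right>_\alpha>0$. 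Reducedness of $v_r$ is automatic from $\gamma\in\Omega(\Gamma)$.

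The only delicate point is the possibility that $v$ overlaps with a shift of itself, which could force consecutive return times to differ by less than $|v|$. Since Poincar\'e recurrence supplies infinitely many returns, one can always thin to a subsequence with the required spacing, so this is not a genuine obstacle. An alternative, more hands-on route would be to iterate part (1) to produce $2^k$ copies of $v$ and then extract a subword with exactly $r$ copies, but the recurrence argument handles both parts uniformly in a single step.
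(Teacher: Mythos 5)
Your argument is a genuinely different route from the paper's, and a cleaner one in several respects, but there is one gap at the key step that needs filling. First the comparison. The paper normalizes $\langle T,\mu\rangle=1$ with $T=\tilde\Gamma$, approximates $\mu$ by rational currents $\eta_{w_n}/\|w_n\|_\Gamma$, and runs a combinatorial counting argument on the circuits $w_n$: disjoint occurrences of $v^{\pm1}$ fill a definite fraction of $w_n$, so many of the complementary gaps $u_{n_i}$ between them must be ``short,'' and since there are only finitely many possible short gaps, a single fixed $u$ recurs between copies of $v^{\pm1}$ with definite frequency along a subsequence; passing to the limit gives $\langle v^{\pm1}uv^{\pm1},\mu\rangle>0$, and part (2) is then obtained from part (1) by induction. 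Your route through the one-sided shift model and Poincar\'e recurrence is shorter and more conceptual, bypasses the rational approximation and the subsequence bookkeeping, treats parts (1) and (2) uniformly, and even yields the slightly sharper conclusion that all exponents may be taken equal to $+1$.

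The gap: having fixed a recurrent $\gamma\in A=Cyl_\Omega(v)$ and read off that its prefix of length $n_1+|v|$ is $vzv$, you write ``Since $\gamma\in Cyl_\Omega(vzv)$, this gives $\tau\mu(Cyl_\Omega(vzv))>0$.'' A single point lying in a Borel set does not by itself give that set positive measure, so this implication is not justified as written. The fix is easy: the set of recurrent points in $A$ has full $\tau\mu$-measure in $A$, and $\tau\mu(A)>0$, while $supp(\tau\mu)$ has full $\tau\mu$-measure in $\Omega(\Gamma)$; hence the recurrent set meets $supp(\tau\mu)$ in a set of positive measure, in particular a nonempty set. Choosing $\gamma$ in that intersection, every open neighborhood of $\gamma$ has positive $\tau\mu$-measure, and the cylinders $Cyl_\Omega(vzv)$ and $Cyl_\Omega(v_r)$ that you exhibit for parts (1) and (2) are open neighborhoods of $\gamma$. (Alternatively, for part (1) one can avoid picking a point altogether: recurrence gives $\tau\mu\bigl(A\cap T_\Gamma^{-n}A\bigr)>0$ for some $n\ge|v|$, and $A\cap T_\Gamma^{-n}A$ is a finite disjoint union of the cylinders $Cyl_\Omega(vzv)$ over reduced connecting paths $z$ of length $n-|v|$, so one of them has positive measure.) You correctly flag overlap of $v$ with its own shifts as something to handle by thinning, but the support issue is the more serious one and goes unremarked; with that repair the proof is correct.
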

\begin{proof} The above proposition seems to be well known to experts in the field, but for completeness we will provide a sketch of the proof here. Let $T=\tilde{\Gamma}$, and normalize $\mu$ such that $\left<T,\mu\right>=1$. There exists a sequence $\{w_n\}$ of conjugacy classes such that 
\[
\mu=\lim_{n\to\infty}\dfrac{\eta_{w_n}}{\|w_n\|_\Gamma}.
\]
This means that there exists an integer $M>0$ such that for all $n\ge M$, 
\[
\dfrac{\left<v,\eta_{w_n}\right>_\alpha}{\|w_n\|_\Gamma}\ge\dfrac{\epsilon}{2}.
\]
Note that without loss of generality we can assume $\|w_n\|_\Gamma\to\infty$. Otherwise, $\mu$ would be a rational current for which the conclusion of the Proposition clearly holds. From here, it follows that for some $\epsilon_1>0$, we have
\[
\dfrac{m(n)\ell_\Gamma(v)}{\|w_n\|_\Gamma}\ge\epsilon_1,
\]
where $m(n)$ is the maximal number of disjoint occurrences of $v^{\pm1}$ in $w_n$. Let $u_{n_i}$ be the complementary subwords in $w_n$ as in Figure \ref{rational}.

\begin{figure}[htb]
\labellist
\small\hair 2pt
 \pinlabel {$v^{\pm1}$} [ ] at 260 35
 \pinlabel {$v^{\pm1}$} [ ] at 105 360
 \pinlabel {$v^{\pm1}$} [ ] at 400 490
 \pinlabel {$v^{\pm1}$} [ ] at 550 190
 \pinlabel {$u_{n_1}$} [ ] at 540 360
 \pinlabel {$u_{n_2}$} [ ] at 440 55
 \pinlabel {$u_{n_j}$} [ ] at 220 475
\endlabellist
\begin{center}
\includegraphics[scale=0.35]{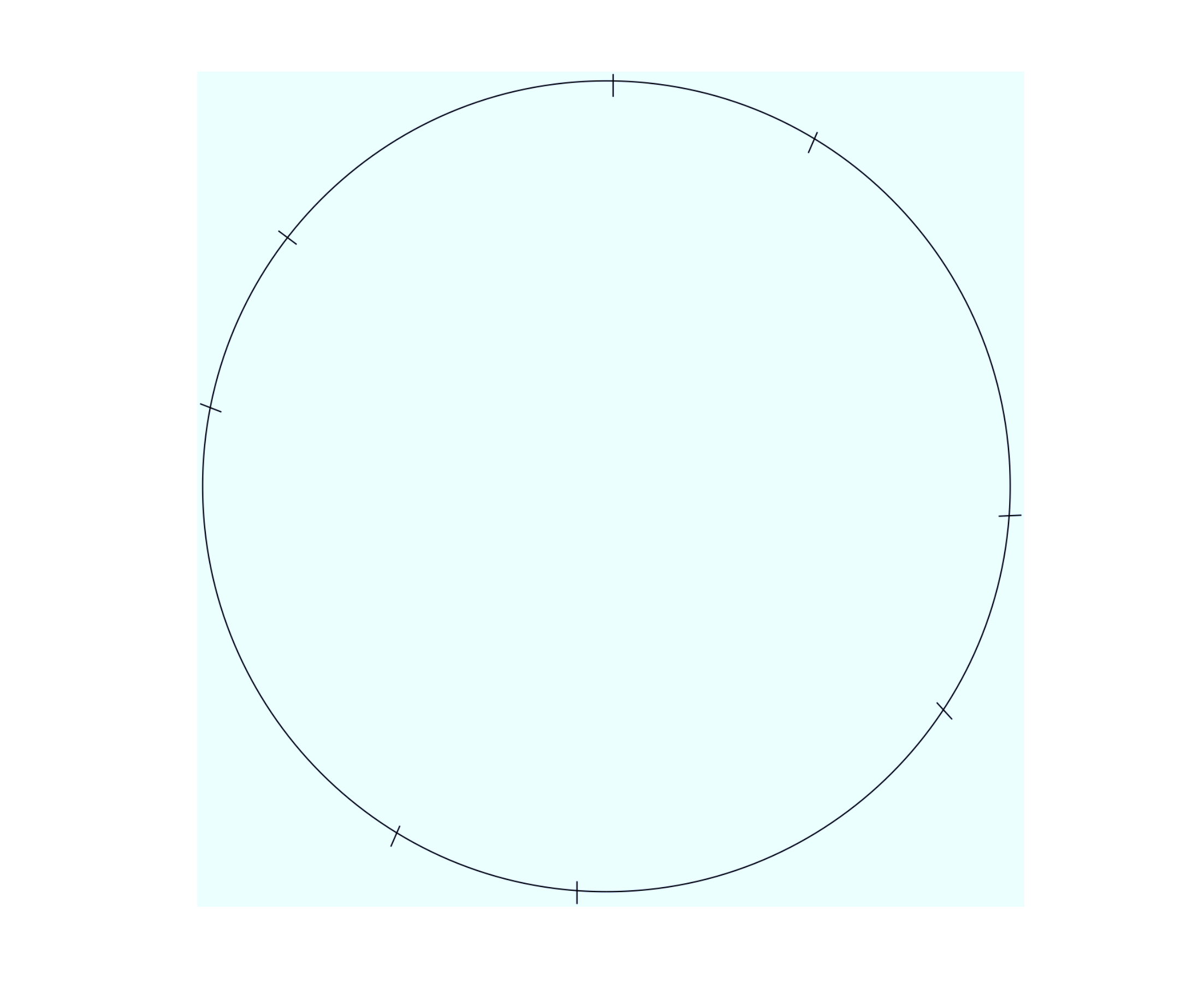}
\caption{$w_n$}
\label{rational}
\end{center}
\end{figure}

Let us set $K=\dfrac{\ell_\Gamma(v)}{\epsilon_1}$. Observe that for all $n\ge M$ we have $\min{\ell_\Gamma(u_{n_i})}\le K$, otherwise we would have 
\[
\|w_n\|_\Gamma\ge m(n)K+m(n)\ell_\Gamma(v)
\]
and hence,
\[
\dfrac{m(n)\ell_\Gamma(v)}{\|w_n\|_\Gamma}\le\epsilon_1,
\]
which is a contradiction. Let us call complementary subwords $u_{n_i}$ of length $\ell_\Gamma(u_{n_i})\le K$ \emph{``short"}. By using a similar reasoning it is easy to see that \emph{short} $u_{n_i}$ cover a definite proportion of $w_n$ for all $n\ge M$. 

Since there are only finitely many edge-paths $\rho$ of length $\ell_\Gamma(\rho)\le K$ in $\Gamma$, for each $w_n$ we can look at the short $u_{n_i}$ which occurs most in $w_n$. This particular $u_{n_i}$ covers a definite amount of $w_n$. Now, take a subsequence $n_k$ so that it is the same short $u$ for every $n_k$. This means that, $v^{\pm}uv^{\pm}$ covers a definite proportion of $w_{n_k}$. Since $\mu$ is the limit of $\eta_{w_n}$'s, this shows that 
\[
\left<v^{\pm1}uv^{\pm1},\mu\right>_\alpha>0.
\]
This completes the proof of part $(1)$ of Prop \ref{freq}. Part $(2)$ now follows from part $(1)$ by induction.
\end{proof}


The standard proof of the following lemma uses the result that a hyperbolic iwip $\varphi\in Out(F_N)$ acts on $\mathbb{P}Curr(F_N)$ with north-south dynamics; but since we are proving that result in this paper we need a different argument. 
\begin{lem}\label{erg1} Let $\varphi\in Out(F_N)$ be a hyperbolic iwip. Let $[\mu]\ne[\mu_{+}]$ be a geodesic current and $T_{-}$ be as in \ref{notation}. Then, $\left<T_{-},\mu\right>\ne0$. Similarly, for a geodesic current $[\mu]\ne[\mu_{-}]$ and $T_{+}$ as in \ref{notation}, we have $\left<T_{+},\mu\right>\ne0$.
\end{lem}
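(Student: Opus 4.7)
My plan is to derive the lemma from the unique ergodicity of the BFH lamination (Proposition~\ref{ergodicbfh}) via the Kapovich--Lustig zero-locus characterization of the intersection form. By applying the same argument to $\varphi^{-1}$, whose attracting/repelling trees are $T_-$ and $T_+$ and whose attracting/repelling currents are $[\mu_-]$ and $[\mu_+]$, it will suffice to establish only the first statement.

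Suppose for contradiction that $[\mu] \ne [\mu_+]$ but $\langle T_-, \mu \rangle = 0$. The first step is to invoke the Kapovich--Lustig description of the zero locus of the intersection form (see \cite{KL3, KL6}): $\langle T, \mu \rangle = 0$ if and only if $supp(\mu) \subseteq L(T)$. Applied to $T_-$ this gives $supp(\mu) \subseteq L(T_-)$. The second step is to identify $L(T_-)$: for a hyperbolic iwip, Bestvina--Feighn--Handel \cite{BFH97} together with Coulbois--Hilion--Lustig and Kapovich--Lustig \cite{CHL1, KL6} give $L(T_-) = \overline{diag}(L_{BFH}(\varphi))$, and moreover every leaf of $L(T_-) \setminus L_{BFH}(\varphi)$ carries a uniformly bounded number of illegal turns of the train-track map $f$ (in the hyperbolic case diagonal leaves only arise from gluing finitely many singular BFH half-leaves, in bounded patterns), whereas every leaf of $L_{BFH}(\varphi)$ is entirely legal in $\Gamma$.

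The heart of the argument is then to upgrade $supp(\mu) \subseteq L(T_-)$ to $supp(\mu) \subseteq L_{BFH}(\varphi)$ using Proposition~\ref{freq}. If $supp(\mu)$ contained a leaf outside $L_{BFH}(\varphi)$, then some finite edge-path $v$ crossing an illegal turn would satisfy $\left<v, \mu\right>_\alpha > 0$. Proposition~\ref{freq}(2) would yield, for every $r \geq 2$, an edge-path $v_r = v^{\epsilon_1} z_1 v^{\epsilon_2} \cdots z_{r-1} v^{\epsilon_r}$ with $\left<v_r, \mu\right>_\alpha > 0$, so $v_r$ must appear as a subword of the geodesic realization of some single leaf in $supp(\mu) \subseteq L(T_-)$. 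Since each copy of $v^{\pm 1}$ in $v_r$ forces an illegal turn in that leaf, $v_r$ requires at least $r$ illegal turns; for $r$ exceeding the uniform bound from the previous step this is impossible. Hence $supp(\mu) \subseteq L_{BFH}(\varphi)$, and Proposition~\ref{ergodicbfh} forces $[\mu] = [\mu_+]$, contradicting our assumption.

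The main obstacle I anticipate is the structural input of the second step: correctly formulating the identification of $L(T_-)$ with $\overline{diag}(L_{BFH}(\varphi))$ and establishing the uniform bound on illegal turns per non-BFH leaf. This is genuinely where hyperbolicity of $\varphi$ enters; in the geometric case, diagonal closures can create unbounded chains and the bounded-illegal-turns argument would break down, consistent with the three-fixed-point picture of Theorem~\ref{geometriciwips} replacing clean north-south dynamics there.
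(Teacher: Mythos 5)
Your approach follows the paper's proof almost exactly: the Kapovich--Lustig zero-locus criterion gives $supp(\mu)\subseteq L(T_-)$; the identification $L(T_-)=\overline{diag}(L_{BFH}(\varphi))$ together with the structure of the finitely many extra (singular) leaves is quoted from \cite{KL6}; Proposition~\ref{freq} is used to rule out singular leaves in $supp(\mu)$; and Proposition~\ref{ergodicbfh} closes the argument. One point needs correcting, though, and it is not purely cosmetic. You phrase the bootstrapping step entirely in terms of illegal turns: ``some finite edge-path $v$ crossing an illegal turn would satisfy $\left<v,\mu\right>_\alpha>0$,'' and then you count illegal turns in $v_r$. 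But the singular leaves of $L(T_-)\setminus L_{BFH}(\varphi)$ come in two kinds (see the two cases in the paper's proof): those of the form $\rho^{-1}\eta\rho'$, whose only non-BFH turn is the illegal turn at the tip of the INP $\eta$, and those of the form $\rho^{-1}\rho'$, where the two eigenrays are glued along a turn that is \emph{legal but unused}. A leaf of the second kind has \emph{no} illegal turns at all. If the offending leaf of $supp(\mu)$ is of that second type, there is no edge-path $v$ crossing an illegal turn with $\left<v,\mu\right>_\alpha>0$, and your contradiction never gets started.

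The fix is exactly what the paper does: take $v=e_2^{-1}e_1$ to be the single ``singular'' turn of the offending leaf --- the INP tip in the first case, the unused legal turn in the second --- and count occurrences of \emph{that turn pattern}, not illegal turns per se. Then Proposition~\ref{freq}(2) produces $v_r$ with $r$ occurrences of $v^{\pm1}$ and $\left<v_r,\mu\right>_\alpha>0$, so $v_r$ must be a subword of some leaf of $supp(\mu)\subseteq L(T_-)$; but every leaf of $L(T_-)$ is either a BFH leaf (all turns used and legal) or a singular leaf carrying exactly one occurrence of a singular turn, so $v_r$ cannot appear for $r\ge 2$. With this replacement the rest of your argument goes through and coincides with the paper's proof.
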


\begin{proof} We will prove the first statement. The proof of the second statement is similar. Let $(\Gamma,\alpha)$ be a marking and $f:\Gamma\to\Gamma$ be a train-track representative for $\varphi\in Out(F_N)$. Assume that for a geodesic current $\mu\in Curr(F_N)$ we have $\left<T_{-},\mu\right>=0$. By a result of Kapovich-Lustig \cite{KL3}, this implies that 
\[
supp(\mu)\subset L(T_{-}),
\]
where $L(T_{-})$ is the dual algebraic lamination associated to $T_{-}$ as explained in Example \ref{duallamination}. 
It is shown in \cite{KL6} that, $L(T_{-})=\overline{diag}(L_{BFH}(\varphi))$ and moreover, $L(T_{-})\smallsetminus(L_{BFH}(\varphi))$ is a finite union of $F_N$ orbits of leaves $(X,Y)\in\partial^{2}F_N$, where geodesic realization $\gamma$ in $\Gamma$ of $(X,Y)$ is a concatenation of eigenrays at either an INP or an unused legal turn.  

\begin{claim*} $supp(\mu)\subset L_{BFH}(\varphi)$.
\end{claim*}
Assume that this is not the case, this means that there is a leaf $(X,Y)$ in the support of $\mu$ such that $(X,Y)\in L(T_{-})\smallsetminus(L_{BFH}(\varphi))$. By a result of Kapovich-Lustig, \cite{KL6} a geodesic representative of $(X,Y)\in L(T_{-})\smallsetminus(L_{BFH}(\varphi))$, $\gamma_{\Gamma}(X,Y)$ can be one of the following two types of singular leaves. See Figure \ref{leaves}. 
\begin{enumerate} 
\item $\gamma_{\Gamma}(X,Y)=\rho^{-1}\eta\rho'$, where $\rho$ and $\rho'$ are again \emph{combinatorial eigenrays} of $f$, and $\eta$ is the unique \emph{INP} in $\Gamma$. In this case turns between $\eta$ and $\rho$, and between $\eta$ and $\rho'$ are legal (and may or may not be used), and $\gamma_{\Gamma}(X,Y)$ contains exactly one occurrence of an illegal turn, namely the tip of the INP $\eta$.    
\item $\gamma_{\Gamma}(X,Y)=\rho^{-1}\rho'$, where $\rho$ and $\rho'$ are
\emph{combinatorial eigenrays} of $f$ satisfying $f(\rho)=\rho$ and $f(\rho')=\rho'$, and where the turn between $\rho$ and $\rho'$ is legal but not used. In this case all the turns contained in $\rho$ and $\rho'$ are used. 
\end{enumerate}

\begin{figure}
\labellist
\small\hair 2pt
 \pinlabel {$e_1$} [ ] at 521 495
 \pinlabel {$\rho'$} [ ] at 720 360
 \pinlabel {$e_2$} [ ] at 490 495
 \pinlabel {$\rho'$} [ ] at 765 772
 \pinlabel {$\eta$} [ ] at 530 835
 \pinlabel {$\rho$} [ ] at 300 772
 \pinlabel {$\rho$} [ ] at 315 360
 \pinlabel {$e_2$} [ ] at 505 1002
 \pinlabel {$e_1$} [ ] at 550 1002
\endlabellist
\centering
\includegraphics[scale=0.25]{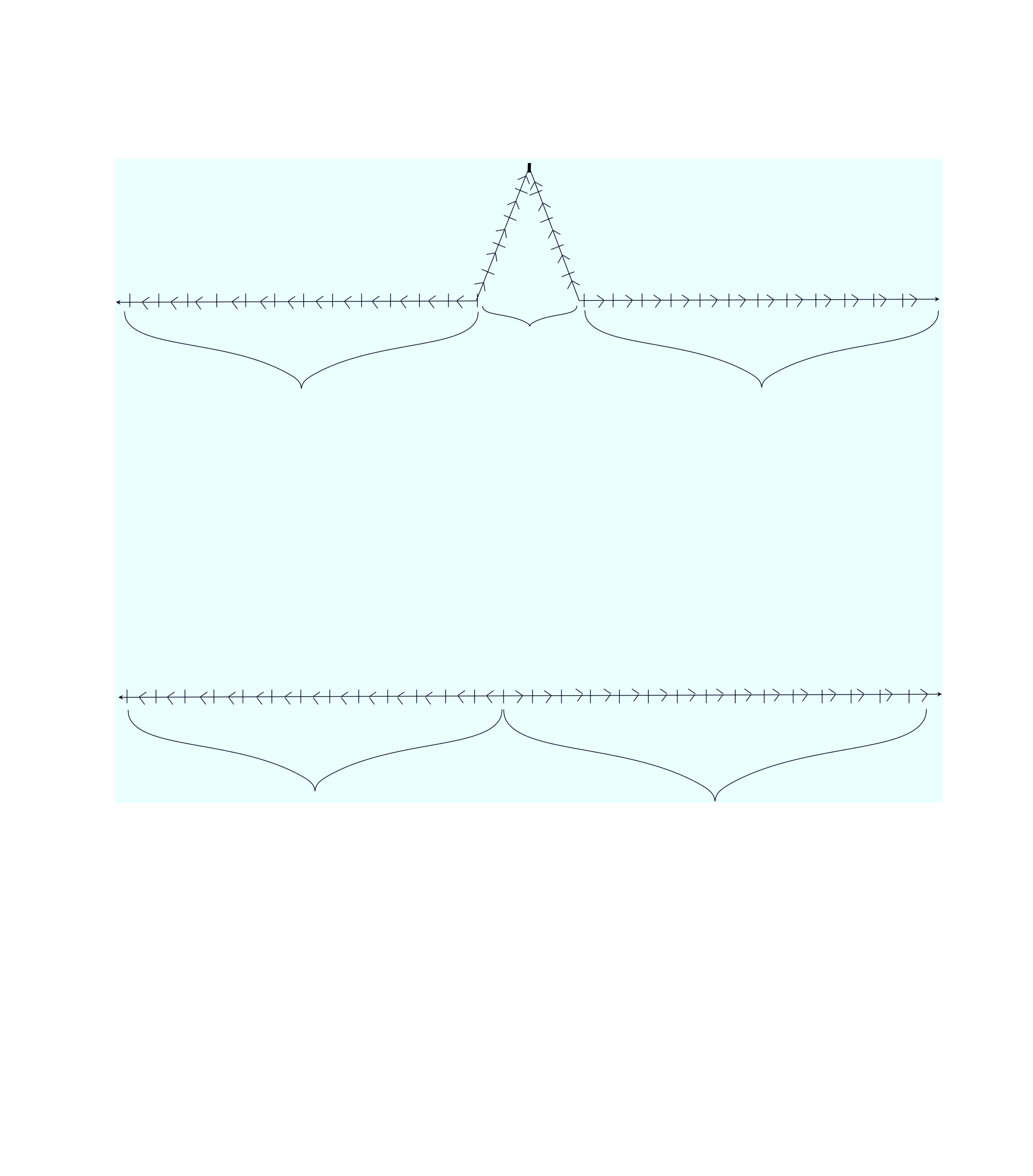}
\caption{Singular Leaves}
\label{leaves}
\end{figure}

First, recall that a bi-infinite geodesic $\gamma$ is in the support of $\mu$ if and only if for every subword $v$ of $\gamma$,\[
\left<v,\mu\right>_\alpha>0.
\]
Now, let $e_2^{-1}e_1$ be either the unused subword at the concatenation point as in the second case or the \emph{tip} of the INP as in the first case. Since $\left<e_2^{-1}e_1,\mu\right>>0$, Proposition \ref{freq} implies that there exists a subword $v=(e_2^{-1}e_1)^{\pm1}\dotsc (e_2^{-1}e_1)^{\pm1}\dotsc (e_2^{-1}e_1)^{\pm1}\dotsc (e_2^{-1}e_1)^{\pm1}$ which is in the support of $\mu$. This is a contradiction to the fact that support of $\mu$ consists precisely of
\begin{enumerate}
\item bi-infinite used legal paths, and 
\item bi-inifinite paths with one singularity as in Figure \ref{leaves}. 
\end{enumerate}
Therefore,  $supp(\mu)\subset L_{BFH}(\varphi)$. Now, Proposition \ref{ergodicbfh} implies that $[\mu]=[\mu_{+}]$. 
\end{proof}

\begin{proof}[Proof of Theorem \ref{pointwise}] We will prove the first assertion, the proof of the second assertion is similar. Suppose that this is not the case. Then, there exists a subsequence $\{n_{k}\}$ such that 
\[
\lim_{n_k\to\infty}\varphi^{n_k}([\mu])=[\mu']\ne[\mu_{+}].
\]
This means that there exists a sequence of positive real numbers $\{c_{n_{k}}\}$ such that 
\[
\lim_{n_k\to\infty}c_{n_{k}}\varphi^{n_k}(\mu)=\mu'.
\]
We first note that, by invoking Proposition \ref{intersectionform}, we have 
\[
\left<T_{+},\mu'\right>=\left<T_{+},\lim_{n_k\to\infty}c_{n_k}\varphi^{n_k}(\mu)\right>=\lim_{n_k\to\infty}c_{n_k}\lambda_{+}^{n_k}\left<T_+,\mu\right>,
\]
which implies that $\lim_{n_k\to\infty}c_{n_k}=0$. \\
Similarly, using Proposition \ref{intersectionform}, we get
\[
\left<T_{-},\mu'\right>=\left<T_{-},\lim_{n_{k}\to\infty}c_{n_k}\varphi^{n_{k}}(\mu)\right>
=\lim_{n_{k}\to\infty}c_{n_{k}}\left<T_{-}\varphi^{n_{k}},\mu\right>
=\lim_{n_{k}\to\infty}\dfrac{c_{n_{k}}}{\lambda_{-}^{n_{k}}}\left<T_{-},\mu\right>=0,
\]
which is a contradiction to the Lemma \ref{erg1}. This finishes the proof of the Theorem \ref{pointwise}. 
\end{proof}
\bibliographystyle{abbrv}
\bibliography{dynamicsoncurrents}

\begin{thebibliography}{10}

\bibitem{BF}
M.~Bestvina and M.~Feighn.
\newblock A combination theorem for negatively curved groups.
\newblock {\em J. Differential Geom.}, 35(1):85--101, 1992.

\bibitem{BF93}
M.~Bestvina and M.~Feighn.
\newblock Outer limits.
\newblock 1993.
\newblock http://andromeda.rutgers.edu/~feighn/papers/outer.pdf.

\bibitem{BF10}
M.~Bestvina and M.~Feighn.
\newblock A hyperbolic {${\rm Out}(F_n)$}-complex.
\newblock {\em Groups Geom. Dyn.}, 4(1):31--58, 2010.

\bibitem{BFH97}
M.~Bestvina, M.~Feighn, and M.~Handel.
\newblock Laminations, trees, and irreducible automorphisms of free groups.
\newblock {\em Geom. Funct. Anal.}, 7(2):215--244, 1997.

\bibitem{BFH00}
M.~Bestvina, M.~Feighn, and M.~Handel.
\newblock The {T}its alternative for {${\rm Out}(F_n)$}. {I}. {D}ynamics of
  exponentially-growing automorphisms.
\newblock {\em Ann. of Math. (2)}, 151(2):517--623, 2000.

\bibitem{BH92}
M.~Bestvina and M.~Handel.
\newblock Train tracks and automorphisms of free groups.
\newblock {\em Ann. of Math. (2)}, 135(1):1--51, 1992.

\bibitem{Brink}
P.~Brinkmann.
\newblock Hyperbolic automorphisms of free groups.
\newblock {\em Geom. Funct. Anal.}, 10(5):1071--1089, 2000.

\bibitem{CP}
M.~Clay and A.~Pettet.
\newblock Current twisting and nonsingular matrices.
\newblock {\em Comment. Math. Helv.}, 87(2):385--407, 2012.

\bibitem{CL95}
M.~M. Cohen and M.~Lustig.
\newblock Very small group actions on {${\bf R}$}-trees and {D}ehn twist
  automorphisms.
\newblock {\em Topology}, 34(3):575--617, 1995.

\bibitem{Coo}
D.~Cooper.
\newblock Automorphisms of free groups have finitely generated fixed point
  sets.
\newblock {\em J. Algebra}, 111(2):453--456, 1987.

\bibitem{CHL1}
T.~Coulbois, A.~Hilion, and M.~Lustig.
\newblock {$\Bbb R$}-trees and laminations for free groups. {I}. {A}lgebraic
  laminations.
\newblock {\em J. Lond. Math. Soc. (2)}, 78(3):723--736, 2008.

\bibitem{CHL2}
T.~Coulbois, A.~Hilion, and M.~Lustig.
\newblock {$\Bbb R$}-trees and laminations for free groups. {II}. {T}he dual
  lamination of an {$\Bbb R$}-tree.
\newblock {\em J. Lond. Math. Soc. (2)}, 78(3):737--754, 2008.

\bibitem{CHL3}
T.~Coulbois, A.~Hilion, and M.~Lustig.
\newblock {$\Bbb R$}-trees and laminations for free groups. {III}. {C}urrents
  and dual {$\Bbb R$}-tree metrics.
\newblock {\em J. Lond. Math. Soc. (2)}, 78(3):755--766, 2008.

\bibitem{CV}
M.~Culler and K.~Vogtmann.
\newblock Moduli of graphs and automorphisms of free groups.
\newblock {\em Invent. Math.}, 84(1):91--119, 1986.

\bibitem{H}
U.~Hamenstaedt.
\newblock Lines of minima in outer space.
\newblock {\em Duke Mathematical Journal}, 163(4):733--766, 2014.

\bibitem{Iva}
N.~V. Ivanov.
\newblock {\em Subgroups of {T}eichm\"uller modular groups}, volume 115 of {\em
  Translations of Mathematical Monographs}.
\newblock American Mathematical Society, Providence, RI, 1992.
\newblock Translated from the Russian by E. J. F. Primrose and revised by the
  author.

\bibitem{Ka1}
I.~Kapovich.
\newblock The frequency space of a free group.
\newblock {\em Internat. J. Algebra Comput.}, 15(5-6):939--969, 2005.

\bibitem{Ka2}
I.~Kapovich.
\newblock Currents on free groups.
\newblock In {\em Topological and asymptotic aspects of group theory}, volume
  394 of {\em Contemp. Math.}, pages 149--176. Amer. Math. Soc., Providence,
  RI, 2006.

\bibitem{Ka5}
I.~Kapovich.
\newblock Algorithmic detectability of iwip automorphisms.
\newblock {\em Bulletin of the London Mathematical Society}, 2014.

\bibitem{KL2}
I.~Kapovich and M.~Lustig.
\newblock Geometric intersection number and analogues of the curve complex for
  free groups.
\newblock {\em Geom. Topol.}, 13(3):1805--1833, 2009.

\bibitem{KL3}
I.~Kapovich and M.~Lustig.
\newblock Intersection form, laminations and currents on free groups.
\newblock {\em Geom. Funct. Anal.}, 19(5):1426--1467, 2010.

\bibitem{KL5}
I.~Kapovich and M.~Lustig.
\newblock Ping-pong and outer space.
\newblock {\em J. Topol. Anal.}, 2(2):173--201, 2010.

\bibitem{KL6}
I.~Kapovich and M.~Lustig.
\newblock Invariant laminations for irreducible automorphisms of free groups.
\newblock {\em The Quarterly Journal of Mathematics}, 2014.

\bibitem{LL}
G.~Levitt and M.~Lustig.
\newblock Irreducible automorphisms of {$F_n$} have north-south dynamics on
  compactified outer space.
\newblock {\em J. Inst. Math. Jussieu}, 2(1):59--72, 2003.

\bibitem{L}
M.~Lustig.
\newblock Structure and conjugacy for automorphisms of free groups {I}.
\newblock {\em MPI-Bonn preprint series 2000, No. {241}}, 2000.

\bibitem{Martin}
R.~Martin.
\newblock {\em Non-uniquely ergodic foliations of thin-type, measured currents
  and automorphisms of free groups}.
\newblock ProQuest LLC, Ann Arbor, MI, 1995.
\newblock Thesis (Ph.D.)--University of California, Los Angeles.

\bibitem{Michel}
P.~Michel.
\newblock Stricte ergodicit\'e d'ensembles minimaux de substitution.
\newblock {\em C. R. Acad. Sci. Paris S\'er. A}, 278:811--813, 1974.

\bibitem{Q}
M.~Queff{\'e}lec.
\newblock {\em Substitution dynamical systems---spectral analysis}, volume 1294
  of {\em Lecture Notes in Mathematics}.
\newblock Springer-Verlag, Berlin, second edition, 2010.

\bibitem{SE}
E.~Seneta.
\newblock {\em Non-negative matrices}.
\newblock Halsted Press [A division of John Wiley \& Sons], New York, 1973.
\newblock An introduction to theory and applications.

\bibitem{Th}
W.~P. Thurston.
\newblock On the geometry and dynamics of diffeomorphisms of surfaces.
\newblock {\em Bull. Amer. Math. Soc. (N.S.)}, 19(2):417--431, 1988.

\bibitem{U}
C.~Uyanik.
\newblock Generalized north-south dynamics on the space of geodesic currents.
\newblock {\em Geometriae Dedicata}, pages 1--20, 2014.

\end{thebibliography}

\end{document}